\numberwithin{equation}{section}
\theoremstyle{plain}
\newtheorem{Proposition}[equation]{Proposition}
\newtheorem{Corollary}[equation]{Corollary}
\newtheorem{Theorem}[equation]{Theorem}
\newtheorem{Lemma}[equation]{Lemma}
\theoremstyle{definition}
\newtheorem{Definition}[equation]{Definition}
\newtheorem{Example}[equation]{Example}
\newtheorem{Remark}[equation]{Remark}
\def\HH{\mathscr{H}}
\def\R{\mathbb{R}}
\def\D{\mathbb{D}}
\def\T{\mathbb{T}}
\def\N{\mathbb{N}}
\def\ov{\overline}
\title[Direct and reverse Carleson measures]{Direct and reverse Carleson measures for
$\HH(b)$ spaces}
\author[Blandign\`{e}res]{Alain Blandign\`{e}res}
 \address{Institut Camille Jordan, Universit\'e Claude Bernard Lyon 1,  69622 Villeurbanne C\'edex, France}
 \email{blandigneres@math.univ-lyon1.fr}
\author[Fricain]{Emmanuel Fricain}
 \address{Laboratoire Paul Painlev\'e, Universit\'e Lille 1, 59 655 Villeneuve d'Ascq C\'edex }
 \email{emmanuel.fricain@math.univ-lille1.fr}
\author[Gaunard]{Fr\'{e}d\'{e}ric Gaunard}
\address{Department of Mathematics, KTH Royal Institute of Technology, 10044 Stockholm, Sweden}
\email{gaunard@kth.se}
\author[Hartmann]{Andreas Hartmann}
\address{Institut de Math\'ematiques de Bordeaux, Universit\'e Bordeaux 1, 351 cours de la Lib\'eration 33405 Talence C\'edex, France}
\email{Andreas.Hartmann@math.u-bordeaux1.fr}
\author[Ross]{William T. Ross}
	\address{Department of Mathematics and Computer Science, University of Richmond, Richmond, VA 23173, USA}
	\email{wross@richmond.edu}
\thanks{This work is supported by ANR FRAB: ANR-09-BLAN-0058-02}
\keywords{de Branges-Rovnyak spaces, embeddings,
Carleson measures, reproducing kernel thesis, (non-)extreme point, corona pairs, 
Muckenhoupt condition}
\subjclass[2010]{30J05, 30H10, 46E22}
\begin{document}

\begin{abstract}
In this paper we discuss direct and reverse Carleson measures for the de Branges-Rovnyak spaces $\HH(b)$, mainly when $b$ is a non-extreme point of the unit ball of $H^\infty$.
\end{abstract}

\maketitle

\section{Introduction}

In this paper we wish to discuss reverse Carleson measures for the reproducing kernel Hilbert space $\mathscr{H}(b)$ of analytic functions on the open unit disk $\D$ whose  reproducing kernel is given by
$$k^{b}_{\lambda}(z) := \frac{1 - \overline{b(\lambda)} b(z)}{1 - \overline{\lambda} z},\quad \lambda\in\D.$$
Here $b$ belongs to $H^{\infty}_{1}$, the unit ball in $H^{\infty}$, and $H^\infty$ is the Banach algebra of bounded analytic functions on $\D$ normed with the supremum norm $\|\cdot\|_{\infty}$.  The space  $\HH(b)$ is often known as the \emph{de Branges-Rovnyak space} and we will review the basics of this space in a moment. For now, note that when $\|b\|_{\infty} < 1$, then $\mathscr{H}(b)$ is just the classical Hardy space $H^2$ \cite{Duren,Garnett} with an equivalent norm while if $b$ is an inner function, meaning $|b|=1$ almost everywhere on $\T=\partial\D$, then $\mathscr{H}(b)$ is the classical model space $(b H^{2})^{\perp} = H^2 \ominus b H^2$. For any $b$, the space $\mathscr{H}(b)$ is contractively contained in $H^2$. As is often the case, the properties of $\HH(b)$ spaces, including direct and reverse Carleson measures, depend on whether or not $b$ is an extreme point of $H^\infty_1$. Recall \cite{MR0098981} that $b$ is an extreme point of $H^{\infty}_{1}$ when $\int_{\T} \log(1-|b|) dm = - \infty$, where $m$ is normalized Lebesgue measure on the unit circle $\T$. 

Let $M_{+}(\D^{-})$ denote the positive finite Borel measures on the closed unit disk $\D^{-}$. By a reverse Carleson measure for $\mathscr{H}(b)$ we mean a measure $\mu \in M_{+}(\D^{-})$, for which 
$$\|f\|_{b} \lesssim \|f\|_{\mu},$$
where $\|\cdot\|_{\mu}$ represents the standard $L^2(\mu)$ norm, $\|\cdot\|_{b}$ represents the norm in $\mathscr{H}(b)$, and $f$ ranges over a suitable set in $\HH(b)$. We will make this more precise below.

 A direct Carleson measure is, as to be expected and is well studied for many spaces of analytic functions, a  measure $\mu$ for which $\|f\|_{\mu} \lesssim \|f\|_{b}$ for all $f \in \mathscr{H}(b)$, i.e., $\mathscr{H}(b)$ embeds continuously into $L^2(\mu)$. A reader familiar with $\HH(b)$ spaces will know how difficult it can be to compute or even estimate the norm of an element in $\HH(b)$. This question of direct and reverse Carleson measures could be helpful in this direction. 
Note that such measures are also called sampling measures.
In particular, there is an interesting and close connection with the problem of sampling sequences (see Corollary~\ref{Cor:sampling} below). 

In order to give a more precise defintion of reverse Carleson measure,
the alert reader will have noticed that when the measure $\mu$ has part of the unit circle $\T$ in its carrier, it is not quite clear if the boundary values of every $f \in \mathscr{H}(b)$ exist $\mu$-almost everywhere so that the integrals $\|f\|_{\mu}$ might not make sense. By a {\it carrier} of a measure $\mu\in M_{+}(\D^{-})$ we mean a Borel set $C\subset \D^-$ for which $\mu(A\cap C)=\mu(A)$ for all Borel subsets $A\subset\D^-$. In a way, we want to be as broad as possible as to not impose a too stringent condition on $\mu$, and so we will only require the reverse inequality to hold on a 
dense set in $\mathscr{H}(b)$. To specify this dense set, and to make sure the integrals $\|f\|_{\mu}$ are well defined for $f$ in this dense set, we make the following definition. 

\begin{Definition}\label{defn:admissible}
For $\mu \in M_{+}(\D^{-})$ we say that an analytic function $f$ on $\D$ is $\mu$-\emph{admissible} if the non-tangential limits of $f$ exist $\mu$-almost everywhere on $\T$. We let $\mathscr{H}(b)_{\mu}$ denote the set of $\mu$-admissible functions in $\mathscr{H}(b)$.
\end{Definition}

With this definition in mind, if $f \in \mathscr{H}(b)_{\mu}$, then defining $f$ on the carrier of $\mu|\T$ via its non-tangential boundary values, we see that $\|f\|_{\mu}$ is well defined with a value in $[0, +\infty]$.

Of course if $\mu$ is carried on $\D$, i.e., $\mu(\T) = 0$, then $\HH(b)_\mu=\HH(b)$. So Definition~\ref{defn:admissible} becomes meaningful when $\mu$ has part of the unit circle $\T$ in its carrier. Certainly for (normalized) Lebesgue measure $m$ on $\T$ we know that  $\mathscr{H}(b) = \mathscr{H}(b)_{m}$ (since $\mathscr{H}(b)$ is always contractively contained in $H^2$ and, via standard theory \cite{Duren, Garnett}, $H^2$ functions have non-tangential boundary values $m$-almost everywhere) though there are often other $\mu$, even ones with non-trivial singular parts on $\T$ with respect to $m$, for which $\mathscr{H}(b) = \mathscr{H}(b)_{\mu}$.  The Clark measures associated with an inner function $b$ (which are known to be singular with respect to $m$) have this property (see  \cite{BFGHR, CRM}). 

If $b$ is a $\mu$-admissible function, then so are all the reproducing kernels $k^{b}_{\lambda}$ (along with finite linear combinations of them) and thus, with this admissibility assumption on $b$, $\mathscr{H}(b)_{\mu}$ is a dense linear manifold in $\mathscr{H}(b)$. When $b$ is a non-extreme point of $H^\infty_1$, then $\mathscr{H}(b)_{\mu}$ contains $\mathscr{H}(b) \cap \mathcal{C}(\D^{-})$ which also turns out to be dense (see Section 2 below). This motivates our definition of reverse Carleson measure.

\begin{Definition}
For $\mu \in M_{+}(\D^{-})$ and $b \in H^{\infty}_{1}$ we say that $\mu$ is a \emph{reverse Carleson measure for} $\mathscr{H}(b)$ if $\HH(b)_{\mu}$ is dense in $\HH(b)$ 
and $\|f\|_{b} \lesssim \|f\|_{\mu}$ for all $f \in \mathscr{H}(b)_{\mu}$. 
\end{Definition}

In this definition, we allow the possibility for $\|f\|_{\mu}$ to be infinite. 

We are now ready to state our main reverse Carleson result. For an open arc $I$ in $\T$, let 
\begin{equation} \label{window}
S(I):=\left\{ z \in \D^{-} :\;\frac{z}{|z|}\in I,\;1-|z|\leq\frac{m(I)}{2}\right\}
\end{equation}
be the \emph{Carleson window over $I$}.

\begin{Theorem}\label{MainThmIntro}
Let $\mu \in M_{+}(\D^{-})$ and let $b$ be a non-extreme point of $H^{\infty}_{1}$ and $\mu$-admissible. If $h=d\mu|\T/dm$, then the following assertions are equivalent: 
\begin{enumerate}
\item The measure $\mu$ is a reverse Carleson mesure for $\mathscr{H}(b)$;
\item The inequality $\|k^{b}_\lambda\|_b \lesssim \|k^{b}_{\lambda}\|_{\mu}$ holds for every $\lambda\in\D$;
\item The measure $\nu$ defined by $d\nu:=(1 - |b|)d\mu$, 
satisfies
\begin{equation*}
\inf_{I}\frac{\nu\left(S(I)\right)}{m(I)}>0;
\end{equation*}
\item We have $\mathrm{ess}\inf_{\T} (1 - |b|) h>0$. 
\end{enumerate}
\end{Theorem}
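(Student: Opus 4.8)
My plan is to establish the four-fold equivalence through the cycle $(1)\Rightarrow(2)\Rightarrow(4)\Rightarrow(1)$, supplemented by the two auxiliary links $(4)\Rightarrow(3)$ and $(3)\Rightarrow(2)$, which fold assertion (3) into the chain and make the whole diagram strongly connected. The implication $(1)\Rightarrow(2)$ is immediate: since $b$ is $\mu$-admissible, so is every kernel $k^b_\lambda$, and these lie in $\HH(b)_\mu$, so the reverse inequality of (1) applied to $f=k^b_\lambda$ is precisely (2). Likewise $(4)\Rightarrow(3)$ is immediate from $\nu(S(I))\ge\int_I(1-|b|)h\,dm\ge(\mathrm{ess\,inf}_\T(1-|b|)h)\,m(I)$, since $I\subset S(I)$.

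For $(3)\Rightarrow(2)$ I would use the elementary pointwise bound $|1-\overline{b(\lambda)}b(z)|^2\ge(1-|b(\lambda)||b(z)|)^2\ge(1-|b(\lambda)|)(1-|b(z)|)$, which gives
$$\|k^b_\lambda\|_\mu^2\ge(1-|b(\lambda)|)\int_{\D^-}\frac{1-|b(z)|}{|1-\ov\lambda z|^2}\,d\mu(z)=(1-|b(\lambda)|)\int_{\D^-}\frac{d\nu(z)}{|1-\ov\lambda z|^2}.$$
Restricting the integral to the Carleson window $S(I_\lambda)$ over the arc $I_\lambda$ centered at $\lambda/|\lambda|$ of length $\asymp 1-|\lambda|$, where $|1-\ov\lambda z|\lesssim 1-|\lambda|$, and invoking (3) as $\nu(S(I_\lambda))\gtrsim m(I_\lambda)\sim 1-|\lambda|$, yields $\|k^b_\lambda\|_\mu^2\gtrsim(1-|b(\lambda)|)/(1-|\lambda|)$. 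Comparing with $\|k^b_\lambda\|_b^2=(1-|b(\lambda)|^2)/(1-|\lambda|^2)\le 2(1-|b(\lambda)|)/(1-|\lambda|)$ gives (2).

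The step I expect to be the main obstacle is $(2)\Rightarrow(4)$, the reproducing kernel thesis. I would evaluate (2) at $\lambda=r\zeta$, multiply by $1-r^2$, and let $r\to1^-$. On the left, $(1-r^2)\|k^b_{r\zeta}\|_b^2=1-|b(r\zeta)|^2\to 1-|b(\zeta)|^2$ a.e. On the right I must identify $\lim_r(1-r^2)\|k^b_{r\zeta}\|_\mu^2$ after splitting $\mu$ into its interior part, its singular boundary part, and $h\,dm$. The singular boundary part contributes $0$ a.e. (radial limits of Poisson integrals of singular measures vanish a.e.); the interior part contributes $0$ a.e., which I would justify by bounding the numerator by $4$ and computing, via Fubini and $\int_\T|1-r\ov\zeta z|^{-2}\,dm(\zeta)=(1-r^2|z|^2)^{-1}$, that $\int_\T(1-r^2)\!\int_{\D}|1-r\ov\zeta z|^{-2}\,d\mu|_\D(z)\,dm(\zeta)=\int_\D\frac{1-r^2}{1-r^2|z|^2}\,d\mu|_\D(z)\to0$, then passing to a subsequence $r_n\to1$ to obtain a.e. convergence; and the absolutely continuous part converges to $(1-|b(\zeta)|^2)^2h(\zeta)$ at Lebesgue points (using $b(r\zeta)\to b(\zeta)$ and the bound $|1-\overline{b(r\zeta)}b|^2\le4$). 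Along $r_n$ this gives $1-|b(\zeta)|^2\lesssim(1-|b(\zeta)|^2)^2h(\zeta)$ a.e.; dividing by the a.e.\ positive factor $1-|b(\zeta)|^2$ yields $\mathrm{ess\,inf}_\T(1-|b|^2)h>0$, which is (4) up to the comparability $1-|b|\asymp1-|b|^2$. The delicate points are the a.e.\ vanishing of the interior contribution and the Lebesgue-point analysis of the $r$-dependent absolutely continuous integrand; the rest is bookkeeping.

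Finally, $(4)\Rightarrow(1)$ is where I would exploit the non-extreme structure: let $a$ be the outer Pythagorean mate with $|a|^2=1-|b|^2$ on $\T$, recall the contractive containment $\MM(a)=aH^2\subseteq\HH(b)$ with $\|ag\|_b\le\|g\|_2$, and recall $N^+\cap L^2(m)=H^2$. For $f\in\HH(b)_\mu$, using $\mu|_\T\ge h\,dm$ together with (4),
$$\|f\|_\mu^2\ge\int_\T|f|^2h\,dm\ge c\int_\T\frac{|f|^2}{1-|b|}\,dm\ge c\int_\T\frac{|f|^2}{|a|^2}\,dm=c\,\|f/a\|_{L^2(m)}^2.$$
If the last integral is infinite there is nothing to prove; if it is finite then $f/a\in N^+\cap L^2(m)=H^2$, so $f=a(f/a)\in\MM(a)$ and $\|f\|_b\le\|f/a\|_2\lesssim\|f\|_\mu$. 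Density of $\HH(b)_\mu$ is automatic from the $\mu$-admissibility of $b$, so this is exactly (1). I anticipate that all of the real difficulty concentrates in $(2)\Rightarrow(4)$; the remaining implications are either elementary or direct consequences of the standard non-extreme $\HH(b)$ toolkit.
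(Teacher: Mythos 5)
Your proof is correct, and its backbone $(1)\Rightarrow(2)\Rightarrow(4)\Rightarrow(1)$ together with $(4)\Rightarrow(3)$ coincides with the paper's, but you execute the two nontrivial links genuinely differently. First, you attach statement (3) to the chain via a direct implication $(3)\Rightarrow(2)$, using the elementary bound $(1-xy)^2\ge(1-x)(1-y)$ for $x,y\in[0,1]$ and the geometry of the window $S(I_\lambda)$ with $m(I_\lambda)\asymp 1-|\lambda|$; the paper instead proves $(3)\Rightarrow(4)$ in one line by Lebesgue differentiation (the symmetric derivative of the singular part of $\nu$ vanishes $m$-a.e.). Your link is self-contained and gives an independent route from the window condition straight to the kernel inequality; the paper's is shorter. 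Second, and more substantially, for $(2)\Rightarrow(4)$ the paper integrates the kernel inequality over shrinking Carleson boxes $S(I,y)$, divides by $y$, identifies the limits of the three contributions using its technical harmonic-analysis lemma (Lemma~\ref{HA}, on averaged boundary convergence of $|q(r\xi)|^2$ against Poisson kernels for bounded analytic $q$), obtaining $\int_I(1-|b|^2)\,dm\lesssim\int_I(1-|b|^2)^2h\,dm+\int_I d\sigma$ for every arc $I$, and only then differentiates. You instead take radial limits $\lambda=r\zeta$ directly: the singular part dies a.e.\ by Fatou's theorem for singular measures, the interior part dies in $L^1(m)$ by your Fubini computation and hence a.e.\ along a subsequence --- which indeed suffices, since the target is an a.e.\ inequality --- and the absolutely continuous part converges at appropriate boundary points. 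This buys independence from Lemma~\ref{HA} and from the box-averaging bookkeeping, at the cost of the subsequence extraction and of one point you should make explicit: in the a.c.\ term the limiting integrand $|1-\overline{b(\zeta)}b(\xi)|^2h(\xi)$ depends on the base point $\zeta$, so ``Lebesgue point of $h$'' is not literally sufficient. The standard repair is to expand
\begin{equation*}
|1-\overline{b(\zeta)}b(\xi)|^2h(\xi)=h(\xi)-2\Re\bigl(\overline{b(\zeta)}\,b(\xi)h(\xi)\bigr)+|b(\zeta)|^2|b(\xi)|^2h(\xi)
\end{equation*}
and apply Fatou's theorem to the Poisson integrals of the three \emph{fixed} $L^1$ functions $h$, $bh$ and $|b|^2h$, combined with $b(r\zeta)\to b(\zeta)$ a.e.; the exceptional set is then independent of $\zeta$. (The paper's own application of Lemma~\ref{HA} to the parameter-dependent functions $q_\zeta=1-\overline{b(\zeta)}b$ faces, and implicitly resolves, the same issue.) With that refinement everything you wrote goes through, including your $(4)\Rightarrow(1)$, which is identical to the paper's: $f/a\in N^+\cap L^2=H^2$ since $a$ is outer, followed by the contractive containment of $\mathscr{M}(a)$ in $\mathscr{H}(b)$.
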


Let us place this theorem in some context. As we have already mentioned, when $\|b\|_\infty<1$, then $\mathscr{H}(b) = H^2$, with a norm equivalent to the usual $H^2$ norm.
In this situation, Lef\`evre {\it et al.} \cite{Queffelec} characterized reverse Carleson measures
under
the additional assumption that $\mu$ is already a Carleson measure. In  \cite{HMNOC},  the
authors were able to get rid of this extra assumption using a balayage type argument. This argument will play an important role in the proof of Theorem \ref{MainThmIntro}.

We should also point out that the reverse Carleson inequality stated in \cite{HMNOC} is tested on  the dense set $H^2 \cap \mathcal{C}(\D^{-})$, where $\mathcal{C}(\D^{-})$ denotes the complex-valued continuous functions on $\D^{-}$. 

 Of course the reader will immediately recognize that when the {\em infimum being positive} in statement (3) is replaced by the {\em supremum being finite}, we get the well known Carleson embedding condition which characterizes the boundedness of the embedding of $H^2$ into $L^2(\mu)$ (see \cite{Garnett}). 
 
 The implication (2) $\Rightarrow$ (1) is known as the (reverse) \emph{reproducing kernel thesis} and often appears in many Carleson  and reverse measure problems \cite{MR1864396, MR2417425}. When $b$ is an inner function, then reverse Carleson measures for model spaces were discussed in our recent paper \cite{BFGHR}. As it turns out there is no (reverse) reproducing kernel thesis in this setting \cite{HMNOC}. Reverse Carleson measures for other classical spaces were discussed in \cite{chacon, Luecking85,Luecking88}.

For a general, possibly extreme, point $b$ of $H^{\infty}_{1}$, we will prove the following. 

\begin{Theorem}
Suppose $\mu \in M_{+}(\D^{-})$ and $b \in H^{\infty}_{1}$ is $\mu$-admissible. If $h = d \mu|\T/dm$ and $\mu$ is a reverse Carleson measure for $\mathscr{H}(b)$ then 
\begin{equation}\label{NRE1}
(1 - |b|^2) \lesssim (1 - |b|^2)^{2} h
\end{equation}
$m$-almost everywhere on $\T$. 
\end{Theorem}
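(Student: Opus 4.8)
The plan is to extract the necessary inequality from the reverse Carleson inequality by testing it against the reproducing kernels $k^b_\lambda$, and then passing to a radial boundary limit. The key quantitative identity is $\|k^b_\lambda\|_b^2 = k^b_\lambda(\lambda) = (1-|b(\lambda)|^2)/(1-|\lambda|^2)$, which holds for any $b \in H^\infty_1$. Since $\mu$ is a reverse Carleson measure and $b$ is $\mu$-admissible, each $k^b_\lambda$ lies in $\HH(b)_\mu$, so the inequality $\|k^b_\lambda\|_b \lesssim \|k^b_\lambda\|_\mu$ holds for every $\lambda \in \D$. Squaring gives
\begin{equation}\label{NRE-step1}
\frac{1-|b(\lambda)|^2}{1-|\lambda|^2} \lesssim \int_{\D^-} |k^b_\lambda|^2 \, d\mu.
\end{equation}

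Next I would split the right-hand side into the part on $\T$ and the part on $\D$, writing $d\mu = h\,dm + d\mu_s + d\mu|\D$, and then multiply both sides of \eqref{NRE-step1} by $(1-|\lambda|^2)$ to produce the normalized kernel. The quantity $(1-|\lambda|^2)|k^b_\lambda(z)|^2$ behaves, as $\lambda \to \zeta \in \T$ radially, like a boundary approximate identity: on $\T$ it concentrates mass near $\zeta$ and reproduces the boundary value $h(\zeta)$ weighted by $|1-\overline{b(\zeta)}b(\zeta)|^2 = (1-|b(\zeta)|^2)^2$, while the contribution from the interior $\D$ and from any singular part should vanish in the limit for $m$-almost every $\zeta$. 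The goal is to show that for $m$-almost every $\zeta \in \T$,
\begin{equation}\label{NRE-limit}
\lim_{\lambda \to \zeta} (1-|\lambda|^2)\int_{\D^-} |k^b_\lambda|^2\, d\mu = (1-|b(\zeta)|^2)^2\, h(\zeta),
\end{equation}
after which combining with the limit of the left-hand side, namely $(1-|b(\lambda)|^2) \to 1-|b(\zeta)|^2$, yields exactly \eqref{NRE1}.

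The main obstacle is justifying the boundary behavior in \eqref{NRE-limit} rigorously, and here the $\mu$-admissibility hypothesis on $b$ together with the existence of non-tangential boundary values of $b$ is essential, since $b \in H^\infty_1 \subset H^2$ has such limits $m$-almost everywhere. The part on $\T$ is handled by a Fatou-type argument applied to the measure $h\,dm$, exploiting that $(1-|\lambda|^2)|k^b_\lambda|^2$ restricted to $\T$ is comparable to the Poisson kernel times $(1-|b|^2)^2$; at a Lebesgue point $\zeta$ of $h$ where $b$ has a non-tangential limit, the convolution against the Poisson-like kernel recovers $(1-|b(\zeta)|^2)^2 h(\zeta)$. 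The singular part of $\mu|\T$ contributes zero at $m$-almost every $\zeta$ by the standard fact that the Poisson integral of a singular measure has radial limit zero $m$-a.e.

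A cleaner alternative, which I would use to avoid delicate pointwise limits, is to integrate the inequality \eqref{NRE-step1} against a positive test function on $\T$ after multiplying by $(1-|\lambda|^2)$ and integrating in $\lambda$ over $\D$ with a suitable weight, thereby converting the pointwise kernel estimate into a measure inequality of the form $(1-|b|^2)\,dm \lesssim (1-|b|^2)^2 h\, dm + (\text{error terms})$, and then argue that the error terms are dominated. In either approach the crucial quantitative ingredient is simply the explicit formula for $\|k^b_\lambda\|_b^2$ combined with the elementary inequality $|1-\overline{b(\lambda)}b(z)|^2 \le 4$ and the boundary-identity behavior of the Poisson kernel; I expect the only genuinely subtle point to be controlling the interior and singular contributions uniformly enough to survive the limit, which is exactly where the $\mu$-admissibility of $b$ does its work.
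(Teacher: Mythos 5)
Your overall strategy (test the reverse inequality on the kernels $k^b_\lambda$, which lie in $\HH(b)_\mu$ by $\mu$-admissibility, then let $\lambda=r\zeta\to\zeta$ radially) is genuinely different from the paper's, and it can be made to work, but as written it has one real gap, and it sits exactly where you wrote ``should vanish.'' For the interior piece $\eta:=\mu|\D$ you need
\begin{equation*}
\lim_{r \to 1^{-}} \int_{\D}\frac{1-r^{2}}{|1-r\bar\zeta z|^{2}}\,
\bigl|1-\overline{b(r\zeta)}\,b(z)\bigr|^{2}\,d\eta(z)=0
\qquad\text{for $m$-a.e. }\zeta\in\T,
\end{equation*}
and this is not a routine Fatou-type statement. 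The kernel is only dominated by $8(1-r)\,|\zeta-z|^{-2}$, and $\int_{\D}|\zeta-z|^{-2}\,d\eta(z)$ can be infinite for $m$-a.e.\ $\zeta$ (already for suitable countable sums of point masses accumulating at $\T$), so dominated convergence fails; the natural $L^1$ estimate (see below) gives convergence in $L^1(\T)$, which yields a.e.\ convergence only along subsequences. Moreover, your proposed remedy is misdirected: $\mu$-admissibility of $b$ concerns boundary values $\mu$-a.e.\ on $\T$ (it is what legitimizes testing on $k^b_\lambda$ and the singular term), and has no bearing whatsoever on this interior term. A secondary, fixable soft spot: for the absolutely continuous part, the weight $|1-\overline{b(r\zeta)}b(\xi)|^2$ depends on $r\zeta$, so ``Poisson-like kernel'' is not enough; expand it as $1-2\Re\bigl(\overline{b(r\zeta)}b(\xi)\bigr)+|b(r\zeta)|^2|b(\xi)|^2$ and apply Fatou's theorem to the Poisson integrals of the three fixed $L^1$ functions $h$, $bh$, $|b|^2h$ — this is your substitute for the paper's Lemma on $\lim_{r\to1}\int_\T |q(r\xi)|^2\,\frac{1-r^2}{|\xi-r\zeta|^2}\,dm(\xi)=|q(\zeta)|^2$.

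The interior gap can be closed in two ways. Elementary way: by Fubini and the Poisson identity used in Proposition \ref{bainh2},
\begin{equation*}
\int_\T\!\left(\int_{\D}\frac{1-r^2}{|1-r\bar\zeta z|^2}\,d\eta(z)\right)dm(\zeta)
=\int_{\D}\frac{1-r^2}{1-r^2|z|^2}\,d\eta(z)\longrightarrow 0,
\end{equation*}
so the interior term tends to $0$ in $L^1(\T)$, hence a.e.\ along some $r_n\to1$; since the other two terms have genuine a.e.\ limits and you only need an upper bound on a $\liminf$ of the right-hand side, running your limit along $(r_n)$ finishes the proof. (Less elementary: the interior integral is, for each fixed $z$, a positive superharmonic function of $\lambda$ with zero harmonic minorant, so the $\eta$-average is a potential and Littlewood's theorem gives radial limit $0$ a.e.) The paper avoids this issue entirely: it never takes radial limits, but integrates the tested inequality in $\lambda$ over truncated Carleson boxes $S(I,y)$, divides by $y$, and lets $y\to0$; under that averaging the interior term converges, with a uniform bound coming from $\int_\T\frac{1-r^2}{|1-re^{-it}z|^2}\,dt\le 2\pi$, to a constant times $\eta(I)=0$ since $I\subset\T$, the absolutely continuous part is treated by Lemma \ref{HA}, the singular part dies because $D\sigma=0$ a.e., and the pointwise inequality follows by Lebesgue differentiation. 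That box-averaging scheme is essentially the ``cleaner alternative'' you gesture at in your final paragraph, but your description of it is too vague to count as a proof.
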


When $b$ is inner, the inequality in \eqref{NRE1} is trivial, while in other cases, as we will see
now,
it yields very important information:

\begin{Corollary}
Suppose $\mu \in M_{+}(\D^{-})$, $b \in H^{\infty}_{1}$ is $\mu$-admissible and not inner, $h = d \mu|\T/dm$, and $Z_{b} := \{\zeta \in \T: |b(\zeta)| < 1\}$. If $\mu$ is a reverse Carleson measure for $\HH(b)$ then 
$h \not \equiv 0$ and 
$$\int_{Z_b} \frac{1}{1 - |b|} dm < \infty.$$
\end{Corollary}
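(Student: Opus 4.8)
The plan is to obtain the corollary as an essentially immediate consequence of the preceding theorem, whose conclusion \eqref{NRE1} furnishes a constant $C>0$ with
$$1-|b|^2 \le C\,(1-|b|^2)^2\,h$$
$m$-almost everywhere on $\T$. The crucial structural observation is that $b$ being non-inner means precisely that $|b|<1$ on a set of positive Lebesgue measure, that is, $m(Z_b)>0$, and that $1-|b|^2>0$ $m$-almost everywhere on $Z_b$. This positivity is exactly what legitimizes cancelling one factor of $1-|b|^2$ in \eqref{NRE1}.

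First I would restrict \eqref{NRE1} to $Z_b$ and divide through by $1-|b|^2$ (strictly positive there), obtaining $1 \le C\,(1-|b|^2)\,h$ almost everywhere on $Z_b$, equivalently
$$\frac{1}{1-|b|^2} \le C\,h \qquad m\text{-a.e. on } Z_b.$$
In particular this forces $h>0$ $m$-almost everywhere on $Z_b$, and since $m(Z_b)>0$ we already conclude $h\not\equiv 0$, which is the first assertion.

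To reach the integrability statement I would then invoke the elementary estimate $1/(1-|b|)=(1+|b|)/(1-|b|^2)\le 2/(1-|b|^2)$, valid since $|b|\le 1$. Combining it with the displayed bound gives $1/(1-|b|)\le 2C\,h$ almost everywhere on $Z_b$, and integrating over $Z_b$ yields
$$\int_{Z_b}\frac{1}{1-|b|}\,dm \le 2C\int_{Z_b} h\,dm \le 2C\int_{\T} h\,dm \le 2C\,\mu(\T) < \infty,$$
where the final inequality uses that $h$ is the density of the absolutely continuous part of $\mu|\T$ and that $\mu\in M_+(\D^-)$ is a finite measure.

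I do not expect a genuine obstacle here, since all the analytic content has been extracted in the preceding theorem. The only two points requiring care are (i) the cancellation of $1-|b|^2$, which is permissible exactly because $b$ is non-inner and so $1-|b|^2>0$ on the positive-measure set $Z_b$, and (ii) the finiteness at the last step, which is guaranteed by $\mu$ being finite so that $\int_{\T} h\,dm\le\mu(\T)<\infty$. Everything else is routine algebraic manipulation of the pointwise inequality.
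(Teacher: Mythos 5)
Your proof is correct and follows essentially the same route as the paper: restrict the inequality $(1-|b|^2)\lesssim (1-|b|^2)^2 h$ to $Z_b$, cancel one factor of $1-|b|^2$ (legitimate there since $b$ is not inner, so $m(Z_b)>0$), deduce $(1-|b|)^{-1}\lesssim h$ a.e.\ on $Z_b$, and conclude by integrability of $h$. The paper's proof is just a terser version of the same argument.
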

The above corollary says that any reverse Carleson measure for $\mathscr{H}(b)$, when $b$ is not inner, must have a non-zero absolutely continuous component with respect to $m$. In particular, there cannot be sampling sequences when $b$ is not inner (see Corollary \ref{Cor:sampling}). Notice how this is quite the dichotomy from the inner case where a reverse Carleson measure can be carried by $\D$ or even be singular with respect to $m$. 

We will also discuss (direct) Carleson measures for $\mathscr{H}(b)$. Here we make the following definition. 

\begin{Definition}
 A measure $\mu \in M_{+}(\D^{-})$ is a \emph{Carleson measure} for $\mathscr{H}(b)$ if $\mathscr{H}(b)_{\mu} = \mathscr{H}(b)$ and $\|f\|_{\mu} \lesssim \|f\|_{b}$ for all $f \in \mathscr{H}(b)$.
 \end{Definition}
 
 A result of Aleksandrov \cite{Alek} shows that when $b$ is inner then $\mathscr{H}(b)$ (which is just the model space $(b H^2)^{\perp}$) contains a dense set of continuous functions. Furthermore, if the embedding $\|f\|_{b} \lesssim \|f\|_{b}$ holds for the continuous functions in $\mathscr{H}(b)$ then {\em every} function in $\mathscr{H}(b)$ is $\mu$-admissible, i.e., $\mathscr{H}(b)_{\mu} = \mathscr{H}(b)$. Moreover, the embedding extends to all functions in $\mathscr{H}(b)$. 
 
 For a non-extreme point $b$ of $H^{\infty}_{1}$ there is a unique outer function $a$ with $a(0) > 0$ and such that $|a|^2 + |b|^2 = 1$ $m$-almost everywhere on $\T$. See Section 2 for more on this. 
Here is a sample result concerning Carleson measures. 
 
 \begin{Theorem}
Let $b$ be a rational and non-extreme point of $H^{\infty}_{1}$  and let $\mu \in M_{+}(\D^{-})$. Then the following assertions are equivalent: 
\begin{enumerate}
\item The measure $\mu$ is a Carleson measure for $\HH(b)$;
\item The measure $|a|^2\,d\mu$ is a Carleson measure for $H^2$. 
\end{enumerate}
\end{Theorem}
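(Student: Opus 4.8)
The plan is to reduce the entire theorem to one structural fact about $\HH(b)$ for rational non-extreme $b$: that $\HH(b)$ coincides, with equivalent norms, with $\MM(a)=aH^2$. Recall first that since $b=p/q$ is rational and non-extreme, its Pythagorean mate $a$ (the outer function with $a(0)>0$ and $|a|^2+|b|^2=1$ on $\T$) is again rational: on $\T$ one has $|a|^2=1-|b|^2=(|q|^2-|p|^2)/|q|^2$, and spectral factorization of this nonnegative trigonometric rational produces a rational outer $a$. Being outer and rational, $a$ has no poles in $\D^-$, is continuous on $\D^-$, and satisfies $\|a\|_\infty\le 1$ by the maximum principle. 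I would then invoke the known fact (due to Sarason, in the $\HH(b)$ literature) that whenever $(a,b)$ is a corona pair one has $\MM(a)=\HH(b)$ with $\|ah\|_b\asymp\|h\|_2$ for $h\in H^2$. The map $h\mapsto ah$ is thus a bijection of $H^2$ onto $\HH(b)$, and this is the engine of the proof.

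The next step is to verify that $(a,b)$ is indeed a corona pair, i.e. $\inf_{z\in\D}(|a(z)|+|b(z)|)>0$; here rationality is essential. Since $a$ and $b$ are continuous on the compact set $\D^-$, the function $|a|+|b|$ attains its minimum, so it suffices to rule out common zeros in $\D^-$. In $\D$ there are none because $a$, being outer, is zero-free there; on $\T$ there are none because $|a|^2+|b|^2=1$. Hence $\min_{\D^-}(|a|+|b|)>0$ and $(a,b)$ is a corona pair. Note that it is precisely the continuity coming from rationality that upgrades the Pythagorean identity on $\T$ into an interior corona estimate; for a general non-extreme $b$ this step fails, which is why the theorem is stated for rational $b$.

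With the identification $\HH(b)=aH^2$ in hand, the equivalence of (1) and (2) is a direct computation. Writing an arbitrary $f\in\HH(b)$ as $f=ah$ with $h\in H^2$ and $\|f\|_b\asymp\|h\|_2$, one has $\|f\|_\mu^2=\int_{\D^-}|a|^2|h|^2\,d\mu=\int_{\D^-}|h|^2\,d(|a|^2\mu)$. Thus the inequality $\|f\|_\mu\lesssim\|f\|_b$ for all $f\in\HH(b)$ is equivalent to $\int_{\D^-}|h|^2\,d(|a|^2\mu)\lesssim\|h\|_2^2$ for all $h\in H^2$, which is exactly the assertion that $|a|^2\,d\mu$ is a Carleson measure for $H^2$. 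This disposes of both implications at the level of the norm inequalities.

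The main obstacle is the bookkeeping at the boundary, hidden in the clause $\HH(b)_\mu=\HH(b)$ of the definition and in matching the two integrals on $\T$. Because $b$ is rational and non-extreme, hence not inner, $|b|=1$ only on a finite set $\{\zeta_1,\dots,\zeta_n\}\subset\T$, which is exactly the zero set of $a$ on $\T$; at each $\zeta_i$ the function $a$ vanishes to some order $k_i\ge 1$. I would handle these points as follows. Using the pointwise estimate $|h(z)|\lesssim\|h\|_2(1-|z|^2)^{-1/2}$ together with $|a(z)|\asymp|1-\ov{\zeta_i}z|^{k_i}$ and $1-|z|\asymp|1-\ov{\zeta_i}z|$ along nontangential approach, one sees that $f=ah$ has nontangential limit $0$ at every $\zeta_i$ (since $k_i-\tfrac12>0$); hence every $f\in\HH(b)$ is $\mu$-admissible there regardless of whether $\mu$ charges $\zeta_i$, and any atom of $\mu$ at a $\zeta_i$ contributes $0$ to both $\|f\|_\mu^2$ and to $\int|h|^2\,d(|a|^2\mu)$ because $a(\zeta_i)=0$. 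Away from this finite set, once $|a|^2\,d\mu$ is $H^2$-Carleson its restriction to $\T$ is absolutely continuous with respect to $m$, forcing $\mu\ll m$ on $\T\setminus\{\zeta_1,\dots,\zeta_n\}$, so that the nontangential limits of $f\in H^2$ exist $\mu$-almost everywhere and $\HH(b)_\mu=\HH(b)$. These observations reconcile the set identity $\HH(b)=aH^2$ with the measure-theoretic requirements on $\D^-$ and complete the equivalence.
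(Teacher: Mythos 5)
Your reduction rests on a structural claim that is false, and false precisely in the only interesting case of the theorem. You invoke ``corona pair $\Rightarrow \MM(a)=\HH(b)$ with equivalent norms,'' but Sarason's theorem (as recalled in Section~\ref{section2} of the paper) says that the corona condition is equivalent to $\MM(\ov{a})=T_{\ov a}H^2=\HH(b)$; to get $\MM(a)=aH^2=\HH(b)$ one needs, \emph{in addition}, that $T_{a/\ov a}$ be invertible, equivalently $|a|^2\in (A_2)$. For rational non-extreme $b$ with $\|b\|_\infty=1$ (if $\|b\|_\infty<1$ the statement is classical, since then $\HH(b)=H^2$ and $|a|\asymp 1$), the mate $a$ vanishes on $\T$ at finitely many points $\zeta_i$ to integer orders $k_i\ge 1$, so $|a(e^{it})|^2\asymp|t-t_i|^{2k_i}$ near $\zeta_i=e^{it_i}$ with exponent $2k_i\ge 2$, and such a weight is never $(A_2)$. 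Consequently $aH^2$ is a \emph{proper} closed subspace of $\HH(b)$, of finite codimension $N=\sum_i k_i$: for instance, with $b=(1+z)/2$, $a=(1-z)/2$, the constant function $1$ lies in $\HH(b)$ (polynomials do, since $b$ is non-extreme) but $1/a=2/(1-z)\notin H^2$, so $1\notin aH^2$. Your ``engine'' $h\mapsto ah$ is therefore not onto, the computation $\|f\|_\mu^2=\int|h|^2\,d(|a|^2\mu)$ does not reach all of $\HH(b)$, and the boundary bookkeeping (``atoms at $\zeta_i$ contribute $0$'') fails for $f\in\HH(b)\setminus aH^2$ (take $f\equiv 1$ and $\mu=\delta_{\zeta_i}$). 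Worse, your argument uses rationality only to secure the corona property, so if it were correct it would prove the equivalence for \emph{every} corona pair; this contradicts Section~\ref{section6} of the paper, where a corona pair $(a,b)$ with $|a|^{-2}\in L^1$ but $|a|^2\notin(A_2)$ is constructed such that $|a|^2\,d\mu=dm$ is Carleson for $H^2$ while $d\mu=|a|^{-2}\,dm$ is \emph{not} Carleson for $\HH(b)$.

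What survives is the easy direction: since $\MM(a)$ is always contractively contained in $\HH(b)$, testing the embedding on $ak_\lambda$ gives (1)$\Rightarrow$(2) with no identification needed (this is Proposition~\ref{P-direct-a}). For the hard direction (2)$\Rightarrow$(1), the paper replaces your identification by the direct-sum decomposition $\HH(b)=\MM(a)\dotplus\mathcal{P}_{N-1}$, which it obtains from the fact \cite[X-18]{Sa} that $\MM(a)$ is closed of codimension $N$ in $\HH(b)$ together with the observation that a polynomial of degree at most $N-1$ in $aH^2$ would have to vanish to order $k_i$ at each $\zeta_i$, hence be zero. It then estimates the two summands separately: $\|ag\|_\mu=\|g\|_{|a|^2\mu}\lesssim\|g\|_2\lesssim\|ag\|_b$ using the $H^2$-Carleson hypothesis and the open mapping theorem for the closed subspace $\MM(a)$, and $\|p\|_\mu\lesssim\|p\|_2\le\|p\|_b$ for $p\in\mathcal{P}_{N-1}$ using only the finiteness of $\mu$; the positive angle between the summands then combines the two bounds. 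This finite-codimension correction --- not the identity $\HH(b)=aH^2$ --- is the missing idea in your proposal.
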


The reader might look at the definition of the measure $|a|^2 d \mu$ with some suspicion. However, when $b$ is rational then so is $a$ \cite[Remark 3.2]{MR2418122} and so $|a|^2 d \mu$ is clearly defined even if $\mu|\T$ has a non-trivial singular part with respect to $m$ (a point mass for example). Note that not every rational function in $H_1^\infty$ is non-extreme. For example, a finite Blaschke product is rational and extreme. See Section 5, where we consider more general (not only rational) functions $b$.  

When $b$ is inner then $\mathscr{H}(b)$ is a model space $(b H^2)^{\perp}$ and Carleson measures for these spaces were discussed in \cite{Al98, Baranov-JFA05, Cohn, Volberg-81, TV} (see also \cite{Kacnelson,Logvinenko,Panejah62,Panejah66} for some earlier related results). 

We will also examine when $L^2(\mu)$ can be used to define an equivalent norm of $\mathscr{H}(b)$, i.e., when each $f \in \mathscr{H}(b)$ is $\mu$-admissible and $\|f\|_{b} \asymp \|f\|_{\mu}$. This does indeed occur but only under very stringent circumstances. If we were to require the stronger condition that $\mu$ is an isometric measure, i.e., $\|f\|_{b} = \|f\|_{\mu}$ for all $f \in \mathscr{H}(b)$, then this never occurs:

\begin{Theorem}
If $b$ is non-constant and a non-extreme point of $H^{\infty}_{1}$, then there are no positive isometric measures for $\mathscr{H}(b)$.
\end{Theorem}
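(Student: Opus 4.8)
The plan is to show that a positive isometric measure would force the forward shift $Sf=zf$ to act \emph{isometrically} on $\HH(b)$, and that this in turn forces $b$ to be constant. I would first record the structural facts about non-extreme $\HH(b)$ that the argument rests on. Since $b$ is non-extreme, polynomials are dense in $\HH(b)$, the space is invariant under $S$, and by Sarason's description there is, for each $f\in\HH(b)$, a unique companion $f^{+}\in H^2$ with $T_{\bar b}f=T_{\bar a}f^{+}$ and $\|f\|_{b}^2=\|f\|_2^2+\|f^{+}\|_2^2$, where $T_{\bar\varphi}=P_{+}M_{\bar\varphi}$ is the Toeplitz operator, $P_{+}$ the Riesz projection, and $a$ the outer Pythagorean mate of $b$. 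A short computation with $P_{+}$ on Fourier coefficients then yields the operator identity $(zf)^{+}=zf^{+}+d_{0}(f)$, in which $d_{0}(f)=(zf)^{+}(0)$ is a constant. Since $zf^{+}$ vanishes at the origin and $\|zf\|_2=\|f\|_2$, this gives the \emph{intrinsic shift formula}
\[
\|zf\|_{b}^2-\|f\|_{b}^2=\|(zf)^{+}\|_2^2-\|f^{+}\|_2^2=|d_{0}(f)|^2\ge 0 .
\]

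Next I would bring in the hypothesis. If $\mu$ is isometric then $\|g\|_{b}^2=\int_{\D^{-}}|g|^2\,d\mu$ for every $g\in\HH(b)$, so
\[
\|zf\|_{b}^2-\|f\|_{b}^2=\int_{\D^{-}}(|z|^2-1)\,|f|^2\,d\mu=-\int_{\D}(1-|z|^2)\,|f|^2\,d\mu\le 0 ,
\]
because the integrand vanishes on $\T$ and is non-positive on $\D$. Comparing this with the intrinsic formula forces $d_{0}(f)=0$ for every $f\in\HH(b)$, that is $(zf)^{+}=zf^{+}$. Iterating from $f=1$, for which one computes $1^{+}=\overline{b(0)}/a(0)=:\beta$, gives $(z^{n})^{+}=\beta z^{n}$ for all $n$.

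Finally I would feed this back into the defining relation $T_{\bar a}(z^{n})^{+}=T_{\bar b}z^{n}$. Expanding both sides with $P_{+}$ and matching Taylor coefficients yields $\overline{b_{k}}=\beta\,\overline{a_{k}}$ for every $k$, i.e. $b=(b(0)/a(0))\,a$. Combined with $|a|^2+|b|^2=1$ almost everywhere on $\T$, this makes $|a|$ constant almost everywhere; but an outer function of constant modulus on $\T$ is a constant, so $a$, and hence $b$, is constant, contradicting the hypothesis that $b$ is non-constant.

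The main obstacle is establishing the intrinsic shift formula cleanly: deriving the companion identity $(zf)^{+}=zf^{+}+\text{const}$ and the consequent equality $\|zf\|_{b}^2-\|f\|_{b}^2=|d_{0}(f)|^2$. Once that is in hand, the comparison with the elementary measure-side inequality and the coefficient match are routine, and the role of non-extremality (density of polynomials, $S$-invariance, the Sarason norm formula, and the outer mate $a$) is exactly what makes the argument run.
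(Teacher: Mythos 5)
Your proof is correct, and it reaches the theorem by a genuinely different route from the paper's. The paper first proves Lemma~\ref{normz^n}, the explicit formula $\|z^n\|_b^2=1+\sum_{j=0}^n|c_j|^2$ where $(c_j)$ are the Taylor coefficients of $b/a$, then applies the isometric identity to the monomials and lets $n\to\infty$: dominated convergence gives $\mu(\T)=1+\sum_{j\ge 0}|c_j|^2$, which forces both $\int_{\D}|z|^{2n}\,d\mu=0$ and $\sum_{j>n}|c_j|^2=0$ for every $n$, so $b/a$ is constant. You instead establish the shift identity $(zf)^+=zf^+ + d_0(f)$ and the resulting expansivity formula $\|zf\|_b^2-\|f\|_b^2=|d_0(f)|^2\ge 0$, and play it against the trivial fact that multiplication by $z$ is an $L^2(\mu)$-contraction for $\mu$ carried on $\D^-$; the isometry then forces $d_0\equiv 0$, hence $(z^n)^+=\beta z^n$ and $b=(b(0)/a(0))\,a$. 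The endgames coincide (proportionality of $b$ and $a$, the Pythagorean identity, and outerness force constancy), and the two middles are close cousins: when $b/a\in H^2$ one has $d_0(z^{n-1})=\overline{c_n}$, so telescoping your formula over monomials recovers exactly Lemma~\ref{normz^n}. What your route buys: it avoids the limiting argument, it does not need the hypothesis $b/a\in H^2$ that the paper's lemma requires (the paper gets this implicitly, since an isometric measure satisfies the hypothesis of Proposition~\ref{bainh2}, but never says so), and it isolates a statement of independent interest, namely that the forward shift is expansive on a non-extreme $\HH(b)$ while any isometric embedding into $L^2(\mu)$, $\mu\in M_+(\D^{-})$, would make it a contraction. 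What the paper's route buys is quantitative information about any candidate measure: $\mu(\T)=1+\|b/a\|_2^2$ and $\mu(\D)=0$. Two of your assertions should be attributed to Sarason rather than left bare, though both are indeed standard for non-extreme $b$: the $S$-invariance of $\HH(b)$ (which in fact also follows from your own computation, since it exhibits $T_{\bar b}(zf)\in\mathscr{M}(\bar a)$), and the value $1^+=\overline{b(0)}/\overline{a(0)}$, which equals your $\beta=\overline{b(0)}/a(0)$ only because of the normalization $a(0)>0$; neither point affects correctness.
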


When $b$ is inner, there are plenty of isometric measures \cite{Al98, BFGHR}.

\section{Some reminders about $\HH(b)$ spaces}\label{section2}

For a wonderful detailed treatment of de Branges-Rovnyak spaces, we refer the reader to Sarason's book \cite{Sa} which contains essentially all the material presented in this section. Here, we merely set the notation and remind the reader of some standard facts we will use in this paper. 
For $\phi \in L^{\infty} := L^{\infty}(\T, m)$, define the Toeplitz operator on the classical Hardy space $H^2$ by 
$$T_{\phi} f = P_+(\phi f), \quad f \in H^2,$$
where $P_+$ is the orthogonal projection (often called the Riesz projection) of $L^2 := L^2(\T, m)$ onto $H^2$. 
Note, as is standard, how we regard $H^2$ as both a Hilbert space of analytic functions on $\D$ and, via non-tangential boundary values and Fourier coefficients, a closed subspace of $L^2$ \cite{Duren, Garnett}. We will use 
$$\langle f, g \rangle_2 := \int_{\T} f(\zeta) \overline{g(\zeta)} dm(\zeta)$$ for the inner product on $H^2$ (or $L^2$) and 
$\|f\|_{2} = \sqrt{\langle f, f\rangle_{2}}$ to denote the norm.
Also note that when $\phi \in H^{\infty}$ (the bounded analytic functions on $\D$), we have $T_{\phi} f = \phi f$ which is just a multiplication operator on $H^2$.

For $b \in H^{\infty}_1$ the \emph{de Branges-Rovnyak space} $\HH(b)$ is defined to be 
$$\HH(b) := (I - T_{b} T_{\overline{b}})^{1/2} H^2,$$ endowed with the inner product
$$\langle  (I - T_{b} T_{\overline{b}})^{1/2} f,  (I - T_{b} T_{\overline{b}})^{1/2} g\rangle_{b} := \langle f, g \rangle_2,$$
for $f, g \perp \ker ( (I - T_{b} T_{\overline{b}})^{1/2})$. That is to say, $\HH(b)$ is normed to make $(I - T_{b} T_{\overline{b}})^{1/2}$ a partial isometry of $H^2$ onto $\HH(b)$. When $\|b\|_{\infty} < 1$, the operator $I-T_bT_{\bar b}$ is an isomorphism on $H^2$ and thus $\HH(b) = H^2$ with an equivalent norm. On the other extreme, when $b$ is an inner function then $T_bT_{\bar b}$ is the orthogonal projection of $H^2$ onto $bH^2$ and thus $\HH(b)$ turns out to be $(b H^2)^\perp=H^2\ominus bH^2$ with the standard $H^2$ norm.

The space $\HH(b)$ is a reproducing kernel space with kernel 
$$k^{b}_{\lambda}(z) := \frac{1 - \overline{b(\lambda)} b(z) }{1 -  \overline{\lambda} z}, \quad \lambda,z\in\D, 
$$ i.e., 
$$f(\lambda) = \langle f, k^{b}_{\lambda} \rangle_{b}, \quad \lambda \in \D, f \in \HH(b).$$ We point out that if 
\begin{equation} \label{Cauchykernel}
k_{\lambda}(z) := \frac{1}{1 - \overline{\lambda} z}
\end{equation}
 is the standard reproducing kernel for $H^2$, then 
$$k^{b}_{\lambda} = (I - T_{b} T_{\overline{b}}) k_{\lambda}.$$
Observe the notation here: $k_\lambda^b$ 
is the reproducing kernel for $\HH(b)$ while 
$k_\lambda$ 
is the reproducing kernel for $H^2$. 


As already mentioned in the introduction, starting with the positive definite kernel $k^{b}_{\lambda}$,  $\HH(b)$ can also be defined as the reproducing kernel Hilbert space associated with this kernel.

We will now assume for the main part of the paper that $b$ is a \emph{non-extreme} point of the unit ball of $H^\infty$, which,  by  
the Arens--Buck--Carleson--Hoffman--Royden \cite{MR0098981} theorem, is equivalent to the condition
\begin{equation}\label{extreme-defn} 
-\infty<\int_{\T} \log (1 - |b|) dm.
\end{equation}
To abbreviate, we will simply say $b$ is non-extreme. In this case there is a unique outer function $a$ with $a(0) > 0$ such that 
\begin{equation}\label{def of a}
|a(\zeta)|^2 + |b(\zeta)|^2 = 1 \quad m-\mbox{a.e.} \; \; \zeta \in \T.
\end{equation}
We call a pair $(a, b)$ satisfying the above equality a\emph{ Pythagorean pair}. 

For $\phi \in L^{\infty}$, let 
$$\mathscr{M}(\phi) := T_{\phi} H^2$$
endowed with the norm which makes $T_{\phi}$ a partial isometry from $H^2$ onto $T_{\phi} H^2$. Observe that when $a\in H^{\infty}$ and is outer, then $T_a$ and $T_{\ov{a}}$
are one-to-one and hence 
\[
\|T_af\|_{\mathscr{M}(a)}=\|T_{\ov{a}}f\|_{\mathscr{M}(\ov{a})}= \|f\|_2,\quad f\in H^2.
\]
When $b$ is non-extreme  then $\mathscr{M}(a) = a H^2$ is contractively contained in $\mathscr{M}(\overline{a})$ which is, in turn, contractively contained in $\HH(b)$. Moreover, $\mathscr{M}(\overline{a})$ is dense in $\HH(b)$ and the linear span of the reproducing kernels $k_\lambda$ (for $H^2$), $\lambda\in\D$, is contained and dense in $\mathscr{M}(\overline{a})$ -- and thus it is also dense in $\HH(b)$. In particular, we see that $\HH(b)\cap\mathcal{C}(\D^{-})$ is dense in $\HH(b)$. 

It is also known, when $b$ is non-extreme, that for every $f\in\HH(b)$,
we have $T_{\bar b}f\in \mathscr{M}(\overline{a})$ and one can obtain the norm of $f$ via the formula
\begin{equation}
\label{normdBR}
\Vert f\Vert_{b}^2=\Vert f\Vert_{2}^2+\Vert g\Vert_{2}^2,
\end{equation}
where $g$ is defined by 
\begin{equation} \label{TbTa}
T_{\ov{b}}f=T_{\ov{a}}g.
\end{equation}
Note  that $g$ is unique since, as discussed earlier, $T_{\overline a}$ is one-to-one due to the fact that $a$ is outer. At least when $(\overline{b/a})f\in L^2$, it can be checked that
\begin{equation}\label{Tba}
g = T_{\overline{b/a}}f.
\end{equation}

We say that $(a, b)$ forms a {\it corona pair} if
$$\inf\{|a(z)| + |b(z)|: z \in \D\} > 0.$$
Still under the hypothesis that $b$ is non-extreme, we have that $\mathscr{M}(\overline{a}) = \HH(b)$ (with equivalent norms) if and only if $(a, b)$ forms a corona pair. We also have $\mathscr{M}(a) = \HH(b)$ (with equivalent norms) if and only if $(a, b)$ forms a corona pair and $T_{a/\overline{a}}$ is invertible on $H^2$. 

Recall (see e.g.\ \cite[Theorem B4.3.1]{Nik}) that $T_{a/\bar a}$ is invertible if and only if $|a|^2$ satisfies the \emph{Muckenhoupt $(A_2)$ condition}, i.e.,
\begin{equation} \label{MC}
\sup_{I} \left(\frac{1}{m(I)}\int_{I}|a|^{-2}\,dm\right)\left(\frac{1}{m(I)}\int_{I}|a|^2\,dm\right)<\infty,
\end{equation}
where $I$ runs over all  subarcs of $\T$. 
For shorthand  we will often write 
$$|a|^2 \in (A_2)$$
when $|a|^2$ satisfies \eqref{MC}.  
The $(A_2)$ condition is equivalent to the boundedness of  the Riesz projection $P_+$  from $L^2(|a|^2\,dm)$ to itself
(or from $L^2(|a|^{-2}\,dm)$ to itself). 

We end this section by noting that if $(a,b)$ is a Pythagorean pair, the $\mu$-admissibility of one function does not imply that of the second one. Indeed pick $\mu=\delta_1$, the Dirac measure at the point $1$. Let $a_0$ be an outer function bounded by $1$ which has no radial limit at $1$. Multiply $a_0$ by the singular inner function $I$, defined by $I(z)=\exp((z+1)/(z-1))$. Then $a=a_0 I$ has a radial limit $0$ at $1$. Now $a$ and $a_0$ have the same Pythagorean mates $e^{i\theta}b$
 ($\theta\in\R$). 
Then either $b$ has radial limit at $1$ and $a_0$ has not, or $b$ has no radial limit at $1$ but
$a$ has.

\section{A first observation about reverse Carleson measures}\label{section3}

When $b$ is  non-extreme there is the following interesting relationship between the reverse Carleson measure condition and the Pythagorean pair $(a, b)$. Recall from the previous section that since $b$ is  non-extreme,  the reproducing kernels $k_\lambda$ for $H^2$  belong to $\HH(b)$.

\begin{Proposition} \label{bainh2}
\label{b/a}
If $b\in H^\infty_1$ is  non-extreme and
 $\mu \in M_{+}(\D^{-})$ satisfies 
 $$\Vert k_\lambda \Vert_{b} \lesssim \Vert k_\lambda \Vert_{\mu} \quad \lambda\in\D, $$
then $b/a \in H^2$.
\end{Proposition}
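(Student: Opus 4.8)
The plan is to compute both sides of the hypothesized inequality explicitly at the reproducing kernels $k_\lambda$ and read off the conclusion from the resulting integral bound. First I would compute $\|k_\lambda\|_b^2$. Since $k_\lambda = (I - T_b T_{\bar b}) k_\lambda^{\,\text{(no)}}$... more usefully, recall that $k_\lambda \in \HH(b)$ and I can evaluate its $\HH(b)$ norm directly. Using the norm formula \eqref{normdBR}--\eqref{Tba}, I would write $\|k_\lambda\|_b^2 = \|k_\lambda\|_2^2 + \|g_\lambda\|_2^2$ where $T_{\bar a} g_\lambda = T_{\bar b} k_\lambda$. Since $T_{\bar b} k_\lambda = \overline{b(\lambda)}\, k_\lambda$ (because $T_{\bar\phi} k_\lambda = \overline{\phi(\lambda)} k_\lambda$ for $\phi \in H^\infty$) and likewise $T_{\bar a} k_\lambda = \overline{a(\lambda)} k_\lambda$, I get $g_\lambda = (\overline{b(\lambda)}/\overline{a(\lambda)})\, k_\lambda$, whence
\begin{equation*}
\|k_\lambda\|_b^2 = \left(1 + \frac{|b(\lambda)|^2}{|a(\lambda)|^2}\right)\|k_\lambda\|_2^2 = \frac{1}{|a(\lambda)|^2}\cdot\frac{1}{1-|\lambda|^2},
\end{equation*}
using $|a|^2 + |b|^2$ and $\|k_\lambda\|_2^2 = (1-|\lambda|^2)^{-1}$.

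Next I would bound $\|k_\lambda\|_\mu^2 = \int_{\D^-} |k_\lambda(z)|^2\, d\mu(z)$ from above. The hypothesis $\|k_\lambda\|_b \lesssim \|k_\lambda\|_\mu$ then reads
\begin{equation*}
\frac{1}{|a(\lambda)|^2 (1-|\lambda|^2)} \lesssim \int_{\D^-} \frac{d\mu(z)}{|1-\bar\lambda z|^2}.
\end{equation*}
The idea is to integrate this inequality against a suitable positive weight in $\lambda$ over $\D$ and use Fubini to extract summability of $1/|a|^2$, or equivalently of $|b/a|^2$, on $\T$. Concretely, I would take $\lambda = r\zeta$ with $\zeta \in \T$ fixed, multiply by $(1-r)$ or integrate radially as $r \to 1^-$, and aim to control the right-hand side by the total mass of $\mu$ while the left-hand side, after integration, produces $\int_{\T} |a(\zeta)|^{-2}\, dm(\zeta)$ up to constants. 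Showing $\int_\T |a|^{-2}\,dm < \infty$ is exactly the statement $b/a \in H^2$, since $|b/a|^2 = (1-|a|^2)/|a|^2 = |a|^{-2} - 1$ on $\T$, and since $b/a$ is analytic (as $a$ is outer, $1/a \in N^+$) membership in $H^2$ is equivalent to $L^2(\T)$ boundary integrability of $|b/a|$.

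The main obstacle I expect is controlling the right-hand integral $\int_{\D^-} |1-\bar\lambda z|^{-2}\, d\mu(z)$ uniformly enough to carry out the Fubini argument without losing the needed finiteness. A clean way around this is to \emph{not} integrate in $\lambda$ at all, but instead fix a well-chosen test family. Since the estimate must hold for \emph{every} $\lambda \in \D$, I can take $\lambda = r\zeta$ approaching a boundary point and use a standard harmonic-majorant or weak-type argument: the quantity $(1-|\lambda|^2)\|k_\lambda\|_\mu^2$ is the Poisson-type balayage of $\mu$ evaluated at $\lambda$, which is uniformly bounded by a constant multiple of $\|\mu\|$ when $\mu$ is finite; more precisely, $(1-|\lambda|^2)\int_{\D^-}|1-\bar\lambda z|^{-2}d\mu(z)$ is bounded. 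Rearranging the displayed inequality gives $|a(\lambda)|^{-2} \lesssim (1-|\lambda|^2)\|k_\lambda\|_\mu^2$, and the right side has a nontangential limit bounded by $P[\mu|\T]$ (the Poisson extension of the boundary part) almost everywhere; taking nontangential limits as $\lambda \to \zeta$ and integrating over $\T$ recovers $\int_\T |a|^{-2}\,dm \lesssim \|\mu\| < \infty$. Thus the delicate point is justifying the passage to boundary limits and the integrability of the balayage, for which I would invoke the finiteness of $\mu$ and standard Poisson-integral estimates; everything else is the explicit kernel computation above.
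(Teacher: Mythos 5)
Your proposal contains two genuine errors, one in each half of the argument. First, the kernel computation: your identity $\|k_\lambda\|_b^2 = \frac{1}{|a(\lambda)|^2}\cdot\frac{1}{1-|\lambda|^2}$ rests on applying $|a(\lambda)|^2+|b(\lambda)|^2=1$ at the interior point $\lambda\in\D$. The Pythagorean identity \eqref{def of a} holds only $m$-almost everywhere on $\T$; inside $\D$ one has merely $|a(\lambda)|^2+|b(\lambda)|^2\le 1$, and there is no lower bound unless $(a,b)$ is a corona pair, which is not assumed. The correct formula (this is \eqref{eq:norm-Hb-kernel} in the paper) is $\|k_\lambda\|_b^2=\bigl(1+\frac{|b(\lambda)|^2}{|a(\lambda)|^2}\bigr)\frac{1}{1-|\lambda|^2}$, so the hypothesis only yields $\frac{|b(\lambda)|^2}{|a(\lambda)|^2}\lesssim (1-|\lambda|^2)\|k_\lambda\|_\mu^2$, not your displayed inequality $\frac{1}{|a(\lambda)|^2}\lesssim (1-|\lambda|^2)\|k_\lambda\|_\mu^2$. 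This particular slip is repairable, since the conclusion concerns $b/a$ and not $1/a$: you should simply carry $|b(\lambda)/a(\lambda)|^2$ through the argument (on $\T$ the two quantities differ by the constant $1$, but inside $\D$ they can differ wildly).

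The second error is not cosmetic. Your key claim that $(1-|\lambda|^2)\int_{\D^-}|1-\bar\lambda z|^{-2}\,d\mu(z)$ is \emph{uniformly} bounded by a multiple of $\mu(\D^-)$ is false: take $\mu=\delta_1$ and $\lambda=r\in(0,1)$; the quantity equals $\frac{1+r}{1-r}\to\infty$. What is true is that its \emph{average} over the circle $|\lambda|=r$ is at most $\mu(\D^-)$, because of the exact identity $\int_0^{2\pi}\frac{1-r^2}{|1-rze^{-it}|^2}\,\frac{dt}{2\pi}=\frac{1-r^2}{1-r^2|z|^2}\le 1$ for all $z\in\D^-$ — and this is precisely the paper's argument: integrate the pointwise inequality in $t$ for $\lambda=re^{it}$, apply Fubini, and conclude $\sup_{0<r<1}\int_0^{2\pi}|b(re^{it})/a(re^{it})|^2\,dt\le C\mu(\D^-)<\infty$, which is $b/a\in H^2$ by the very definition of the Hardy space. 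Your alternative of taking nontangential limits could be made to work, but not by the mechanism you describe: one would need that the balayage of $\mu|\D$ has radial limit zero $m$-a.e.\ (a nontrivial fact, essentially Littlewood's theorem on Green potentials) together with Fatou's theorem for $\mu|\T$, and then Smirnov's maximum principle to pass from $L^2$ boundary values of $b/a\in N^+$ to $b/a\in H^2$. As written, your proof substitutes a false uniform bound for these ingredients, so the passage to the boundary is unjustified; the paper's Fubini-over-circles argument avoids all boundary-limit theory and is the elementary route you were looking for.
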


\begin{proof}
Since
\[
T_{\bar b}k_\lambda=\overline{b(\lambda)} k_\lambda=\overline{b(\lambda)}\,{\overline{a(\lambda)}}^{-1}T_{\bar a}k_\lambda,
\]
we get from \eqref{normdBR}
\begin{equation}\label{eq:norm-Hb-kernel}
\|k_\lambda\|_b^2=\|k_\lambda\|_2^2+\frac{|b(\lambda)|^2}{|a(\lambda)|^2}\|k_\lambda\|_2^2=\left(1+\frac{|b(\lambda)|^2}{|a(\lambda)|^2}\right)\frac{1}{1-|\lambda|^2}.
\end{equation}
Hence there is a constant $C>0$ such that 
\[
\frac{|b(\lambda)|^2}{|a(\lambda)|^2}\leq C \int_{\D^{-}}\frac{1-|\lambda|^2}{|1-\bar\lambda z|^2}\,d\mu(z),\quad \lambda\in\D.
\]
Setting $\lambda=re^{it}$, integrating both sides of the previous inequality over $t\in (0,2\pi)$, and using Fubini's theorem, we obtain 
\[
\frac{1}{2\pi}\int_0^{2\pi}\frac{|b(re^{it})|^2}{|a(re^{it})|^2}\,dt\leq C \int_{\D^{-}}\left(\int_0^{2\pi}\frac{1-r^2}{|1-rze^{-it}|^2}\,\frac{dt}{2\pi}\right)\,d\mu(z).
\]
Using basic properties of the Poisson kernel we get
\[
\int_0^{2\pi}\frac{1-r^2}{|1-rze^{-it}|^2}\,\frac{dt}{2\pi}=\frac{1-r^2}{1-r^2|z|^2}\leq 1,
\]
for every $z\in\D^{-}$, which yields 
\[
\frac{1}{2\pi}\int_0^{2\pi}\frac{|b(re^{it})|^2}{|a(re^{it})|^2}\,dt\leq C \mu(\D^{-})< \infty
\]
for every $0<r<1$. Hence (by the definition of the Hardy space) $b/a \in H^2$.
\end{proof}

We can connect Proposition \ref{b/a} to a series of other well-known equivalences \cite{MR847333}.

\begin{Theorem}[Sarason]\label{Sarasonequivalence}
Let $(a,b)$ be a Pythagorean pair. Then the following assertions are equivalent:
\begin{enumerate}
\item The function $b/a$ belongs to $H^2$;  
\item The space $H^\infty$ is contained in $\mathscr H(b)$; 
\item We have $\sup_{n\in\N}\Vert z^{n}\Vert_{b}<\infty$; 
\item The function $\left(1-|b|\right)^{-1}$ belongs to $L^{1}$.
\end{enumerate}
\end{Theorem}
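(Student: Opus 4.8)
The plan is to prove the cycle $(1)\Rightarrow(2)\Rightarrow(3)\Rightarrow(1)$ together with the separate equivalence $(1)\Leftrightarrow(4)$, with the decisive step $(3)\Rightarrow(1)$ carried by an exact formula for the $\HH(b)$-norm of the monomials.

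First I would record that computation, which is the technical heart. Since $a$ is outer it is zero-free in $\D$, so $b/a$ is a genuine analytic function on $\D$ with Taylor expansion $b/a=\sum_{k\ge0}c_kz^k$; writing $a=\sum\alpha_kz^k$ and $b=\sum\beta_kz^k$, the factorization $b=a\cdot(b/a)$ gives the Cauchy relations $\beta_\ell=\sum_{i=0}^\ell\alpha_ic_{\ell-i}$. By \eqref{normdBR}--\eqref{TbTa} we have $\|z^n\|_b^2=1+\|g_n\|_2^2$, where $g_n\in H^2$ is the unique solution of $T_{\bar a}g_n=T_{\bar b}z^n$. I would then check, by a direct Fourier-coefficient bookkeeping, that the polynomial $g_n:=\sum_{j=0}^n\bar c_{\,n-j}\,z^j$ solves this equation: computing $P_+(\bar a g_n)$ and collapsing the inner sum through the Cauchy relation produces exactly $\sum_{m=0}^n\bar\beta_{n-m}z^m=P_+(\bar b z^n)$. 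By uniqueness of $g_n$ this identifies it, yielding the master identity
\[
\|z^n\|_b^2=1+\sum_{k=0}^n|c_k|^2 .
\]
The key point is that this holds for \emph{every} non-extreme pair $(a,b)$, with no a priori integrability hypothesis. Since an analytic function on $\D$ lies in $H^2$ precisely when its Taylor coefficients are square-summable, letting $n\to\infty$ gives $(1)\Leftrightarrow(3)$ at once.

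Next, for $(1)\Rightarrow(2)$ I would, given $h\in H^\infty$, exhibit the auxiliary function $g:=T_{\overline{b/a}}h=P_+(\overline{b/a}\,h)$, which lies in $H^2$ because $\overline{b/a}\in L^2$ by $(1)$ and $h\in L^\infty$. The Toeplitz composition rule $T_{\bar a}T_{\overline{b/a}}=T_{\bar a\cdot\overline{b/a}}=T_{\bar b}$ (legitimate since $\overline{\bar a}=a\in H^\infty$) shows $T_{\bar a}g=T_{\bar b}h$, whence Sarason's description of $\HH(b)$ as the $f\in H^2$ with $T_{\bar b}f\in\mathscr M(\bar a)$ yields $h\in\HH(b)$, so $H^\infty\subset\HH(b)$. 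For $(2)\Rightarrow(3)$ I would invoke the closed graph theorem: the inclusion $\iota\colon H^\infty\to\HH(b)$ has closed graph, since both spaces inject continuously into $H^2$ and point evaluations on $H^2$ are continuous; hence $\iota$ is bounded, $\|h\|_b\le C\|h\|_\infty$, and testing on $z^n$ (with $\|z^n\|_\infty=1$) gives $(3)$. This closes the cycle.

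Finally, for $(1)\Leftrightarrow(4)$ I would pass to boundary values. By Smirnov's theorem (a Smirnov-class function with $L^2$ boundary values is in $H^2$), condition $(1)$ is equivalent to $\int_\T|b/a|^2\,dm<\infty$, i.e. $\int_\T\frac{|b|^2}{1-|b|^2}\,dm<\infty$ using \eqref{def of a}. Splitting $\T$ into $\{|b|\le\frac12\}$ and $\{|b|>\frac12\}$, the integrand is bounded on the first set, while on the second $\frac{|b|^2}{1+|b|}$ is bounded between positive constants, so $\frac{|b|^2}{1-|b|^2}\asymp\frac1{1-|b|}$ there; the integral is therefore finite iff $\int_\T\frac1{1-|b|}\,dm<\infty$, which is $(4)$. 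The main obstacle is the implication $(3)\Rightarrow(1)$: a priori, boundedness of $\|z^n\|_b$ carries little information, and soft arguments such as passing to a weak limit of the $g_n$ collapse, because the relation $T_{\bar a}g_n=T_{\bar b}z^n$ forces $g_n\rightharpoonup0$ even as the norms $\|g_n\|_2$ may blow up. All the difficulty is thus concentrated in pinning $g_n$ down \emph{exactly}; once the explicit polynomial solution and the resulting master norm identity are in hand, each of the four equivalences becomes elementary.
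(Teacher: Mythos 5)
Your proof is correct, but there is nothing in the paper to compare it against line by line: the paper does not prove this theorem at all, it quotes it as Sarason's result with a citation to \cite{MR847333}. The closest the paper comes is Lemma~\ref{normz^n} in Section~\ref{section7}, which establishes your ``master identity'' $\|z^n\|_b^2=1+\sum_{k=0}^n|c_k|^2$ --- but only under the extra hypothesis $b/a\in H^2$, because its proof runs through the shortcut \eqref{Tba}, $g=T_{\overline{b/a}}f$, which requires $(\overline{b/a})z^n\in L^2$. Your argument removes that hypothesis by exhibiting the explicit polynomial $g_n=\sum_{j=0}^n\bar c_{n-j}z^j$ solving $T_{\bar a}g_n=T_{\bar b}z^n$ and invoking uniqueness (injectivity of $T_{\bar a}$, since $a$ is outer); the Fourier bookkeeping checks out, and this unconditional version is exactly what the implication $(3)\Rightarrow(1)$ needs --- a point you rightly identify as the crux, and which is presumably why the paper's shorter proof of Lemma~\ref{normz^n} could not serve as a proof of the theorem. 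The remaining steps are also sound: the composition rule $T_{\bar a}T_{\overline{b/a}}=T_{\bar b}$ is legitimate because $a\in H^\infty$ (even though $\overline{b/a}$ is merely in $L^2$, all products stay in $L^2$ and the spectral argument for $P_+(\bar a\,P_-(\overline{b/a}h))=0$ goes through); the closed-graph argument for $(2)\Rightarrow(3)$ works since both $H^\infty$ and $\HH(b)$ embed continuously into $H^2$; and the splitting of $\T$ into $\{|b|\le\frac12\}$ and $\{|b|>\frac12\}$ combined with Smirnov's maximum principle gives $(1)\Leftrightarrow(4)$. The one place where you lean on a fact the paper does not record is the converse of ``$f\in\HH(b)\Rightarrow T_{\bar b}f\in\mathscr{M}(\bar a)$'': Section~\ref{section2} states only the forward implication, while your step $(1)\Rightarrow(2)$ needs the reverse one. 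That converse is indeed part of Sarason's standard characterization of $\HH(b)$ for non-extreme $b$, so the appeal is fair, but you should cite it explicitly rather than treat it as contained in the paper's preliminaries.
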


\begin{Remark}\label{rem-on-b-over-a}
 If $b/a$ satisfies the stronger condition $b/a \in H^{\infty}$, then $\Vert b\Vert_{\infty}<1$ and so $\HH(b)=H^2$ (with equivalent norms). To see this, write $b=ah$, for some $h\in H^{\infty}$. It follows that 
\[
1=|a|^2+|b|^2=|a|^{2}\left(1+|h|^{2}\right)\quad \text{a.e.\ on }\T.
\]
From here we see that $1/a\in L^{\infty}$ which implies $\|b\|_{\infty} < 1$.
\end{Remark}

The next result says that not every $\mathscr{H}(b)$ space admits a reverse Carleson measure. 

\begin{Corollary}\label{no-RCE-non-ext}
Let $b \in H^{\infty}_{1}$ be non-extreme. Then $\mathscr{H}(b)$ admits a reverse Carleson measure 
if and only if $(1 - |b|)^{-1} \in L^1$. Thus there are $\HH(b)$ spaces with non-extreme $b$  which do not admit reverse Carleson measures. 
\end{Corollary}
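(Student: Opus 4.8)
The plan is to prove Corollary~\ref{no-RCE-non-ext} by combining Proposition~\ref{b/a} with Sarason's equivalences (Theorem~\ref{Sarasonequivalence}) for the necessity, and by exhibiting an explicit reverse Carleson measure for the sufficiency. First I would establish the forward direction: suppose $\mathscr{H}(b)$ admits a reverse Carleson measure $\mu$. By definition this means $\|f\|_b\lesssim\|f\|_\mu$ for all $f\in\mathscr{H}(b)_\mu$, and since the reproducing kernels $k_\lambda^b$ (hence their $H^2$-counterparts $k_\lambda$) lie in this dense admissible manifold, the reproducing kernel inequality $\|k_\lambda\|_b\lesssim\|k_\lambda\|_\mu$ holds for every $\lambda\in\D$. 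Proposition~\ref{b/a} then yields $b/a\in H^2$, and by the equivalence (1)~$\Leftrightarrow$~(4) of Theorem~\ref{Sarasonequivalence} this is exactly the condition $(1-|b|)^{-1}\in L^1$.

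For the converse, I would assume $(1-|b|)^{-1}\in L^1$ and produce a concrete reverse Carleson measure. The natural candidate is $d\mu:=(1-|b|)^{-1}\,dm$ on $\T$ (or some comparable weight), or more robustly, Lebesgue measure $m$ itself together with a check of condition (4) in the main Theorem~\ref{MainThmIntro}. Indeed, Theorem~\ref{MainThmIntro} characterizes reverse Carleson measures for non-extreme $b$ via the condition $\operatorname{ess\,inf}_\T(1-|b|)h>0$ where $h=d\mu|\T/dm$. So I would simply choose $\mu$ with density $h=(1-|b|)^{-1}$, which is integrable by hypothesis (hence a genuine finite measure), and for which $(1-|b|)h\equiv 1$, trivially bounded below away from zero. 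By Theorem~\ref{MainThmIntro}, statement (4)~$\Rightarrow$~(1) shows this $\mu$ is a reverse Carleson measure. One should check $\mu$-admissibility of $b$, but since $\mu\ll m$ and $b\in H^\infty$ has non-tangential limits $m$-almost everywhere, admissibility is automatic.

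The final clause, asserting the existence of non-extreme $b$ with no reverse Carleson measure, requires only producing a non-extreme $b$ with $(1-|b|)^{-1}\notin L^1$. I would exhibit an outer $b$ for which $1-|b|$ vanishes fast enough on a set of positive measure (or behaves like a non-integrable power near a boundary point) while still satisfying the Szeg\H{o} integrability $\int_\T\log(1-|b|)\,dm>-\infty$ that guarantees non-extremality. A standard construction takes $|b|$ approaching $1$ on $\T$ at a controlled rate, say $1-|b(e^{i\theta})|\asymp|\theta|^{\alpha}$ near $\theta=0$ with $1\le\alpha$, so that $\log(1-|b|)$ is integrable but $(1-|b|)^{-1}$ is not.

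The main obstacle I anticipate is the delicate interplay between the two directions and the role of admissibility. The cleanest route is to invoke Theorem~\ref{MainThmIntro} for both directions, since condition (4) there is manifestly equivalent to producing \emph{some} $\mu$ with $(1-|b|)h$ bounded below—and such $\mu$ exists with finite total mass precisely when $(1-|b|)^{-1}\in L^1$. The subtle point is that Theorem~\ref{MainThmIntro} presupposes $\mu$-admissibility of $b$, so one must ensure the candidate measure respects this; choosing $\mu$ absolutely continuous with respect to $m$ circumvents the difficulty entirely. The only genuine verification left is the explicit example for the last sentence, where one must confirm both non-extremality and the failure of $(1-|b|)^{-1}\in L^1$ simultaneously.
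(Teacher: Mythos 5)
Your proposal is correct and follows essentially the same route as the paper: necessity via Proposition~\ref{b/a} combined with the equivalence (1)$\Leftrightarrow$(4) of Theorem~\ref{Sarasonequivalence}, sufficiency via the explicit measure $d\mu=(1-|b|)^{-1}\,dm$ (finite by hypothesis, with $(1-|b|)h\equiv 1$) fed into the implication (4)$\Rightarrow$(1) of Theorem~\ref{MainThm}, and a power-type example such as $1-|b(e^{i\theta})|\asymp|\theta|^{2}$ for the final clause, which matches the paper's choice $b(z)=(1-z)/2$. One small caution: since the corollary does not assume $b$ is $\mu$-admissible, you cannot assert that the kernels $k_\lambda^{b}$ lie in $\mathscr{H}(b)_\mu$; what your argument actually needs (and all that Proposition~\ref{b/a} requires) is that the Cauchy kernels $k_\lambda$ satisfy the reverse inequality, and this holds because $k_\lambda$ is continuous on $\D^{-}$, hence $\mu$-admissible for \emph{every} $\mu$, and belongs to $\mathscr{H}(b)$ precisely because $b$ is non-extreme.
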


\begin{proof}
Suppose that $\HH(b)$ admits a reverse Carleson measure. Then, by Proposition \ref{bainh2}, along with Theorem \ref{Sarasonequivalence}, we see that $(1 - |b|)^{-1} \in L^1$. The converse will follow from Theorem \ref{MainThm} below (see Remark \ref{R-sorted}). 
\end{proof}

A specific example of an $\mathscr{H}(b)$ space which admits no reverse Carleson measures is with 
$b(z) = (1 - z)/2$. This is because $1/(1 - |b|) \not \in L^1$. 

\section{The Main reverse Carleson measure result}\label{section4}

For $\mu \in M_{+}(\D^{-})$, recall that $b$ is $\mu$-admissible if the non-tangential limits of $b$ exist $\mu$-almost everywhere on $\T$.  Also recall that when $b$ is $\mu$-admissible then $\mathscr{H}(b)_{\mu}$, the set of all $\mu$-admissible functions in $\mathscr{H}(b)$, contains the reproducing kernels $k_\lambda^b$, $\lambda\in\D$, and thus is a dense linear manifold in $\mathscr{H}(b)$. Here is our main reverse Carleson measure result. 

\begin{Theorem}\label{MainThm}
Let $\mu \in M_{+}(\D^{-})$, $b\in H^\infty_1$ be non-extreme and $\mu$-admissible, and let $h=d\mu|\T/dm$. Then the following assertions are equivalent: 
\begin{enumerate}
\item The measure $\mu$ is a reverse Carleson measure for $\mathscr{H}(b)$;
\item The inequality $\|k^{b}_\lambda\|_b\lesssim \|k^{b}_\lambda\|_\mu$ holds for every $\lambda\in\D$;
\item The measure $\nu$ defined by $d\nu:=(1 - |b|^2)d\mu$ satisfies the condition
\begin{equation}
\label{revCar}
\inf_{I}\frac{\nu\left(S(I)\right)}{m(I)}>0;
\end{equation}
\item We have $\mathrm{ess}\inf_{\T} (1 - |b|^2)h >0$. 
\end{enumerate}
\end{Theorem}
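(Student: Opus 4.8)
The plan is to establish the cycle of implications $(1)\Rightarrow(2)\Rightarrow(3)\Rightarrow(4)\Rightarrow(1)$. The first implication $(1)\Rightarrow(2)$ is immediate: since $b$ is $\mu$-admissible, each reproducing kernel $k_\lambda^b$ lies in $\mathscr H(b)_\mu$, so the reverse Carleson inequality applied to $f=k_\lambda^b$ gives exactly $(2)$.

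\medskip

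For $(2)\Rightarrow(3)$, I would compute both sides of the inequality in $(2)$ explicitly in terms of the Pythagorean data. On the one hand, the standard formula for the $\mathscr H(b)$-norm of the kernel gives
\[
\|k_\lambda^b\|_b^2 = k_\lambda^b(\lambda) = \frac{1-|b(\lambda)|^2}{1-|\lambda|^2}.
\]
On the other hand, $\|k_\lambda^b\|_\mu^2 = \int_{\D^-} |k_\lambda^b(z)|^2\,d\mu(z)$, and I would bound $|k_\lambda^b(z)|^2$ from above by a multiple of $(1-|b(z)|^2)$ times the Poisson-type kernel $\frac{1-|\lambda|^2}{|1-\bar\lambda z|^2}$ so that the weight $(1-|b|^2)$ gets absorbed into $d\nu$. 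Testing on $\lambda=(1-m(I))\zeta$ for the center $\zeta$ of the arc $I$, the inequality $\|k_\lambda^b\|_b^2 \lesssim \|k_\lambda^b\|_\mu^2$ then translates, after the usual geometric estimate that $\frac{1-|\lambda|^2}{|1-\bar\lambda z|^2}\gtrsim \frac{1}{m(I)}$ for $z\in S(I)$, into the lower bound $\nu(S(I))/m(I)\gtrsim 1$ uniformly in $I$. The care needed here is that $(1-|b(\lambda)|^2)$ appears on the left, and I must check it does not degenerate the test; since $b$ is non-extreme and non-constant, $|b(\lambda)|<1$ on $\D$, so the left-hand side stays bounded below near the boundary in a controlled way.

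\medskip

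For $(3)\Leftrightarrow(4)$, the key point is a balayage argument of the type cited from \cite{HMNOC}. The condition $\inf_I \nu(S(I))/m(I)>0$ is a \emph{reverse} Carleson window condition for the positive measure $\nu=(1-|b|^2)\,d\mu$. Writing $d\nu = (1-|b|^2)h\,dm + d\nu_s$ with $\nu_s$ the part not absolutely continuous with respect to $m$, the implication $(4)\Rightarrow(3)$ is easy since the absolutely continuous part already forces $\nu(S(I))\gtrsim m(I)$. The reverse implication $(3)\Rightarrow(4)$ is where the balayage enters: one uses the fact that for the reverse window inequality, only the absolutely continuous part $(1-|b|^2)h\,dm$ can contribute to a \emph{uniform} lower bound, because the singular and interior parts of $\nu$ smear out and cannot bound all windows from below. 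This yields $\operatorname{ess\,inf}_\T(1-|b|^2)h>0$.

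\medskip

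Finally, $(4)\Rightarrow(1)$ is the substantive analytic step and I expect it to be \textbf{the main obstacle}. Here one must pass from the pointwise lower bound on the boundary weight to the full reverse inequality $\|f\|_b\lesssim\|f\|_\mu$ for every $f\in\mathscr H(b)_\mu$. The natural route is to use the norm formula \eqref{normdBR}, $\|f\|_b^2=\|f\|_2^2+\|g\|_2^2$ with $T_{\bar b}f=T_{\bar a}g$ and $g=T_{\overline{b/a}}f$, to reduce bounding $\|f\|_b$ to bounding $\|f\|_2$ and $\|g\|_2$ separately in terms of $\|f\|_\mu$. Since $(4)$ gives $(1-|b|^2)h\geq\delta>0$ a.e., and $(1-|b|^2)=|a|^2$ a.e.\ on $\T$ by the Pythagorean identity \eqref{def of a}, condition $(4)$ reads $|a|^2 h\geq\delta$, i.e.\ $h\geq\delta|a|^{-2}$. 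One then shows $\|f\|_2^2\leq\int_\T|f|^2\,dm\lesssim \int_\T|f|^2 h\,dm\leq\|f\|_\mu^2$, and similarly controls $\|g\|_2$ by relating $g=T_{\overline{b/a}}f$ back to $f$ through the weight $|a|^{-2}$. The delicate part is handling the interior mass of $\mu$ and the subordination of boundary values, making rigorous that admissibility guarantees the boundary integral $\int_\T|f|^2h\,dm$ is dominated by $\|f\|_\mu^2$, and then combining the two estimates while tracking constants. The balayage technique from \cite{HMNOC}, together with Sarason's norm formula, is exactly what makes this step work.
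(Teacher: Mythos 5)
Your chain breaks at $(2)\Rightarrow(3)$, and in a way that cannot be patched by the estimates you describe. First, the pointwise bound you propose, $|1-\overline{b(\lambda)}b(z)|^2 \lesssim (1-|b(z)|^2)(1-|\lambda|^2)$, is simply false: fix any $z\in\D$ (so $|b(z)|<1$) and let $|\lambda|\to 1$; the left-hand side stays at least $(1-|b(z)|)^2>0$ while the right-hand side tends to $0$. Second, the ``usual geometric estimate'' runs the wrong way for a \emph{reverse} embedding: the inequality $\frac{1-|\lambda|^2}{|1-\bar\lambda z|^2}\gtrsim \frac{1}{m(I)}\chi_{S(I)}(z)$ shows that the Poisson-type integral \emph{dominates} the window average $\nu(S(I))/m(I)$, so a lower bound on $\int \frac{1-|\lambda|^2}{|1-\bar\lambda z|^2}\,d\nu$ yields no lower bound on $\nu(S(I))/m(I)$: the Poisson kernel has tails, and the mass making the integral large could sit far from $S(I)$. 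Third, your non-degeneracy claim is wrong: non-extremeness does not keep $1-|b(\lambda)|^2$ bounded below near $\T$. The function $b(z)=(1+z)/2$ is non-extreme, yet $|b(\lambda)|\to1$ as $\lambda\to1$, so the single-kernel test at the scale of windows near $1$ degenerates exactly at the delicate points.

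This is why the paper does not test one kernel per window: its proof of the hard implication (stated there as $(2)\Rightarrow(4)$) \emph{averages} the kernel inequality over all $\lambda$ in the truncated window $S(I,y)$, divides by $y$, and lets $y\to0$. The left-hand side then tends to $\int_I(1-|b|^2)\,dm$ rather than to a possibly degenerate pointwise value, and identifying the limit of the right-hand side requires the harmonic-analysis Lemma~\ref{HA} on radial limits of $\int_\T|q(r\xi)|^2\frac{1-r^2}{|\xi-r\zeta|^2}\,dm(\xi)$, which produces the boundary weight $(1-|b|^2)^2h$ \emph{with a square}, plus a singular term whose symmetric derivative vanishes a.e.; one then divides by $1-|b|^2>0$ a.e.\ (using non-extremeness) to reach $(4)$, and $(4)\Leftrightarrow(3)$ is Lebesgue differentiation (your ``balayage'' for $(3)\Rightarrow(4)$ is really just this, not a balayage argument). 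Finally, you have the difficulty inverted: $(4)\Rightarrow(1)$, which you flag as the main obstacle, is in fact the short step, and your sketch of it is essentially the paper's argument: condition $(4)$ says $|a|^2h\geq\delta$, so $\|f\|_\mu<\infty$ forces $f/a\in L^2$, hence $f/a\in H^2$ since $a$ is outer, i.e.\ $f\in\mathscr{M}(a)=aH^2$; then either the contractive containment of $\mathscr{M}(a)$ in $\HH(b)$ (the paper's route) or the norm formula \eqref{normdBR} with $\|g\|_2\leq\|(b/a)f\|_2$ (your route, which gives the identical bound since $\|f\|_2^2+\|(b/a)f\|_2^2=\|f/a\|_2^2$) produces $\|f\|_b\leq\delta^{-1/2}\|f\|_\mu$. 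The genuine analytic content of the theorem lives entirely in the kernel-test implication that you dispatched in a few sentences.
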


\begin{Remark}\label{R-mu-nu}
\begin{enumerate}
\item Notice that since statement (2) of the above theorem implies statement (1), the reverse reproducing kernel thesis holds for $\HH(b)$ when $b$ is  non-extreme.
This is not necessarily the case when $b$ is extreme.  For example, in \cite{HMNOC} it is shown that 
whenever $b$ is an inner function, then there is a measure satisfying (2) but not (1).
\item The part of the measure guaranteeing reverse Carleson embeddings in $\HH(b)$ is supported on $\T$ (with the control given in statement (4) of theorem). For example, any measure carried only by $\D$ can not be a reverse Carleson measure. 
\end{enumerate}
\end{Remark}

The proof of Theorem \ref{MainThm} will require this technical lemma from harmonic analysis. This is surely a `folk theorem' but we prove it here for the reader's convenience. 

\begin{Lemma}\label{HA}
Let $q$ be a bounded analytic function on $\D$.
Then for almost every $\zeta \in \T$, 
\begin{equation} \label{q-vqr}
\lim_{r \to 1^{-}} \int_{\T}|q(r\xi)|^{2}\frac{1-r^{2}}{|\xi-r \zeta|^{2}}dm(\xi) =|q(\zeta)|^2.
\end{equation}
\end{Lemma}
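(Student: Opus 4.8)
The plan is to recognize the integral in \eqref{q-vqr} as the Poisson integral of the boundary function $|q|^2$, evaluated at the point $r\zeta$, and then to invoke the classical Fatou-type theorem on radial (or non-tangential) convergence of Poisson integrals. First I would fix notation: since $q$ is bounded and analytic on $\D$, it has non-tangential boundary values $q(\zeta)$ for $m$-almost every $\zeta\in\T$, and $|q|^2\in L^\infty(\T)\subset L^1(\T)$. The kernel
\[
P_{r\zeta}(\xi):=\frac{1-r^2|\zeta|^2}{|\xi-r\zeta|^2}=\frac{1-r^2}{|\xi-r\zeta|^2}
\]
(using $|\zeta|=1$) is exactly the Poisson kernel for the disk at the point $r\zeta\in\D$, so the left-hand side of \eqref{q-vqr} is
\[
\int_\T |q(r\xi)|^2\,P_{r\zeta}(\xi)\,dm(\xi).
\]
The subtlety I must address is that the integrand uses $|q(r\xi)|^2$, the dilated function, rather than the fixed boundary function $|q(\xi)|^2$; so this is not literally the Poisson integral of a fixed $L^1$ function, but a Poisson integral whose density also depends on $r$.

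My approach would be to split the problem into two estimates and compare against the genuine Poisson integral of the boundary data. Introduce $u(w):=\int_\T |q(\xi)|^2 P_w(\xi)\,dm(\xi)$, the harmonic extension of the fixed boundary function $|q|^2\in L^\infty(\T)$. By the classical Fatou theorem, $u(r\zeta)\to |q(\zeta)|^2$ as $r\to1^-$ for almost every $\zeta\in\T$, since almost every point of $\T$ is a Lebesgue point of $|q|^2$. It then suffices to show that the difference
\[
\int_\T \bigl(|q(r\xi)|^2-|q(\xi)|^2\bigr)\,P_{r\zeta}(\xi)\,dm(\xi)
\]
tends to $0$ for almost every $\zeta$. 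The natural tool here is that the dilates $q_r(\xi):=q(r\xi)$ converge to $q$ in $L^2(\T)$ (hence $|q_r|^2\to|q|^2$ in $L^1(\T)$, using the uniform bound $\|q\|_\infty$ and dominated convergence on the circle), so the $L^1$ density converges; the difficulty is that the Poisson kernel $P_{r\zeta}$ concentrates (its $L^\infty$ norm blows up like $(1-r)^{-1}$) exactly as $r\to1$, so an $L^1$ control of the density is not enough to kill the difference pointwise — the two limits are coupled through the same parameter $r$.

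The main obstacle, therefore, is precisely this coupling between the concentration rate of the kernel and the convergence rate of the dilated density. To handle it cleanly I would instead view the whole expression as a double limit and pass to the harmonic extension directly: since $q$ is analytic and bounded, $|q|^2$ is subharmonic, and in fact $|q(r\xi)|^2$ is already the boundary trace of the harmonic majorant framework. A more robust route is to use the identity $|q(r\xi)|^2=|q|^2(r\xi)$ and observe that the full integral is $|q|^2$ convolved in two stages; one can write it as the value at $r\zeta$ of the harmonic extension of $|q_r|^2$, and then compare $u_r(r\zeta)$ with $u_r(\zeta)$ and with $u(\zeta)$. Controlling $|u_r(r\zeta)-u(r\zeta)|$ via a maximal function bound — specifically, dominating the difference by a constant times the Hardy--Littlewood maximal function of $|q_r-q|\cdot(|q_r|+|q|)$ and using that $q_r\to q$ almost everywhere together with a standard maximal-function-along-subsequences argument — is the crux. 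I expect the estimate controlling the kernel concentration against the rate at which $q_r\to q$ near the boundary to be the technically delicate step, and I would carry it out using the non-tangential maximal function of $q$ (which lies in $L^2(\T)$ by the boundedness of $q$) to dominate $\sup_r |q(r\xi)|$, allowing dominated convergence to close the argument for almost every Lebesgue point $\zeta$.
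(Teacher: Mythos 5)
Your setup is right, and you have put your finger on the correct difficulty: the integral is a Poisson integral whose density $|q_r|^2$ (where $q_r(\xi)=q(r\xi)$) moves with the same parameter $r$ that makes the kernel $P_{r\zeta}(\xi)=(1-r^2)/|\xi-r\zeta|^2$ concentrate, so neither Fatou's theorem for the fixed boundary function nor $L^1$-convergence $|q_r|^2\to|q|^2$ suffices by itself. But the argument you propose at the crux does not close this gap. The maximal-function bound gives $\bigl|\int_\T(|q_r|^2-|q|^2)P_{r\zeta}\,dm\bigr|\lesssim M\bigl(|q_r|^2-|q|^2\bigr)(\zeta)$, and from $\||q_r|^2-|q|^2\|_{L^1}\to 0$ the weak-type $(1,1)$ inequality yields only that $M(|q_r|^2-|q|^2)\to 0$ \emph{in measure}. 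Passing to subsequences then gives almost everywhere convergence along each subsequence, but the exceptional null set depends on the subsequence, so no single full-measure set is produced on which the full limit $r\to1^-$ exists; convergence in measure does not imply a.e.\ convergence of the family (the classical ``marching indicator functions'' example). Nor does the usual dense-class-plus-maximal-inequality scheme apply, because there is no fixed input function here: the density itself varies with $r$. Finally, your fallback via dominated convergence fails for exactly the reason you yourself flagged earlier: there is no integrable majorant for $|q_r-q|\,P_{r\zeta}$ uniform in $r$, since $\|P_{r\zeta}\|_\infty\sim(1-r)^{-1}$; boundedness of the density cannot compensate for concentration of the kernel. So the decisive step of your proof remains unproved.

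The paper sidesteps the coupling entirely with two devices you did not use. First, a one-sided bound from subharmonicity: $|q|^2\le u$ on $\D$, where $u$ is the Poisson extension of the boundary function $|q|^2$; since $u_r$ is harmonic on a neighborhood of $\D^-$, the Poisson integral reproduces it exactly, $\int_\T u(r\xi)P_{r\zeta}(\xi)\,dm(\xi)=u(r^2\zeta)$, and Fatou's theorem then gives, at a.e.\ $\zeta$, only $\limsup_{r\to1}\int_\T|q_r|^2P_{r\zeta}\,dm\le|q(\zeta)|^2$. Second --- the key trick --- this one-sided bound is applied not to $q$ but to $\widetilde q=q-l$, where $l$ is the radial limit of $q$ at $\zeta$: since $|\widetilde q(\zeta)|^2=0$, a $\limsup$ bound on a nonnegative quantity becomes a genuine limit, $\lim_{r\to1}\int_\T|q_r-l|^2P_{r\zeta}\,dm=0$. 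Expanding $|q_r-l|^2$ leaves the cross term $\int_\T\Re(\bar l\,q(r\xi))P_{r\zeta}(\xi)\,dm(\xi)=\Re\bigl(\bar l\,q(r^2\zeta)\bigr)$, evaluated exactly because $\Re(\bar l\,q_r)$ is harmonic, and \eqref{q-vqr} follows. In short, the paper trades your hard quantitative comparison (which would require proving $M(|q_r-q|)(\zeta)\to0$ a.e.\ as a full limit in $r$, a statement at least as delicate as the lemma itself) for exact reproducing identities of dilated harmonic functions plus a subharmonicity inequality pointed at a quantity whose limit must be zero.
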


\begin{proof}
Let us suppose that $\zeta \in \T$ is a Lebesgue point of $|q|^2$ where $q$ admits a radial limit $l$.
Let $u$ be the harmonic function on $\D$ whose (almost everywhere defined) radial limits $u(\xi)$, $\xi \in \T$, satisfy $u(\xi)=|q(\xi)|^2$ a.e. Then,
by the fact that $|q|^2$ is subharmonic, 
we have $|q|^{2}\leq u$ on $\D$ and it follows that
\begin{eqnarray*}
\int_{\T}|q(r\xi)|^{2}\frac{1-r^{2}}{|\xi-r \zeta|^{2}}dm(\xi) 
 & \leq & \int_{\T}u(r\xi)\frac{1-r^{2}}{|\xi-r \zeta|^{2}}dm(\xi) =u(r^{2} \zeta).
\end{eqnarray*}

By our assumptions on $\zeta$, the above implies that 
\begin{equation}
\label{lim sup}
0\le \underset{r \to 1}{\ov{\lim}} \int_{\T}|q(r\xi)|^{2}\frac{1-r^{2}}{|\xi -r \zeta|^{2}}dm(\xi) \leq |q(\zeta)|^{2}=|l|^2, 
\end{equation}
and this is true almost everywhere.
We now set $\widetilde{q}:=q-l$.
Repeating the above argument for $\widetilde{q}$,
from (\ref{lim sup}), we deduce that 
\begin{eqnarray*}
 \underset{r \to 1}{\ov{\lim}} \int_{\T}|q(r\xi)-l|^{2}\frac{1-r^{2}}{|\xi -r \zeta|^{2}}dm(\xi) &=&\underset{r \to 1}{\ov{\lim}} \int_{\T}|\widetilde{q}(r\xi)|^{2}\frac{1-r^{2}}{|\xi -r \zeta|^{2}}dm(\xi) \\ &\leq& |\widetilde{q}(\zeta)|^{2}=0,
\end{eqnarray*}
which, since the expression on the left hand side is always non-negative, 
allows us to switch from a lim sup to a regular limit, i.e., 
\[
\underset{r \to 1}{{\lim}} \int_{\T}|q(r\xi)-l|^{2}\frac{1-r^{2}}{|\xi -r \zeta|^{2}}dm(\xi) =0.
\]

On the other hand,
\begin{eqnarray*}
\lefteqn{\int_{\T}|q(r\xi)-l|^{2}\frac{1-r^{2}}{|\xi-r \zeta|^{2}}dm(\xi)}\\
 && =  \int_{\T}|q(r\xi)|^{2}\frac{1-r^{2}}{|\xi-r \zeta|^{2}}dm(\xi)+|l|^{2}
  -  2\int_{\T}\Re(\ov{l}q(r\xi))\frac{1-r^{2}}{|\xi-r \zeta|^{2}}dm(\xi)\\
 && =  \int_{\T}|q(r\xi)|^{2}\frac{1-r^{2}}{|\xi-r \zeta|^{2}}dm(\xi)+|l|^{2}-2
\Re(\ov{l}q(r^{2}\zeta))
\end{eqnarray*}
(where we have used the fact that  $\Re(\bar l q_r)$ is harmonic, with $q_r(\zeta)=q(r\zeta)$)
and so
\[
\underset{r\to 1}{\lim} \int_{\T}|q(r\xi)|^{2}\frac{1-r^{2}}{|\xi-r \zeta|^{2}}dm(\xi)=|l|^2 =|q(\zeta)|^2,
\]
which is the desired conclusion.
\end{proof}

\begin{Remark} \label{r-q-vqr-I}
It follows from Lemma~\ref{HA} and basic facts about the Poisson kernel that, for an interval $I$,
$$
\lim_{r \to 1^{-}} \int_{I}|q(r\xi)|^{2}\frac{1-r^{2}}{|\xi-r \zeta|^{2}}dm(\xi)$$
is equal almost everywhere to $|q(\zeta)|^2$ when $\zeta$ is in the interior of $I$ and zero when $\zeta$ does not belong to the closure of $I$.
\end{Remark}

\begin{proof}[Proof of Theorem \ref{MainThm}]
The structure of the proof will be to show: $(1) \Rightarrow (2) \Rightarrow (4) \Leftrightarrow (3)$ and $(4) \Rightarrow (1)$.

Statement $(1)$ implies $(2)$ is clear since $b$ is $\mu$-admissible and so  $k^{b}_{\lambda} \in \mathscr{H}(b)_\mu$ for every $\lambda \in \D$. 
Statement $(3)$ implies $(4)$ follows from the Lebesgue differentiation theorem and the fact that the (symmetric) derivative of a singular measure is zero $m$-almost everywhere.  

For the implication $(4)\Rightarrow (3)$, it is sufficient to note that, for any open arc $I$ of $\mathbb T$, we have
$$
\nu(S(I))=\int_{S(I)}(1 - |b|^2)\,d\mu\geq \int_I (1 - |b|^2) h\,dm\geq \delta m(I),
$$
where $$\delta := \mathrm{ess}\inf_{\T} (1 -  |b|^2) h >  0.$$

Let us prove the implication $(4)\Rightarrow (1)$. Let $\delta > 0$ be defined as in the previous line. To test the reverse Carleson condition, we just need to test it on $f \in \mathscr{H}(b)_{\mu}$ for which $\|f\|_{\mu} < \infty$. 
For such functions $f$ we have, with $a$ being the Pythagorean mate for $b$, 
\[
\int_\T |f|^2 |a|^{-2}\,dm\leq \delta^{-1}\int_\T |f|^2 h\,dm=\delta^{-1} \int |f|^2\,d\mu < \infty.
\]
Thus $f/a \in L^2$. But since $a$ is outer, this yields (via a standard fact from Hardy spaces) that $f/a\in H^2$ and thus $f=af/a\in \mathscr{M}(a)$.   Now using the fact that $\mathscr{M}(a)$ is contractively contained in $\HH(b)$, we get 
\[
\|f\|_b\leq \|f\|_{\mathscr{M}(a)}=\|f/a\|_2\leq \delta^{-1/2} \|f\|_\mu
\]
which is the desired inequality. 

It remains to check the implication $(2) \Rightarrow (4)$. If $\|k^{b}_{\lambda}\|_{b} \lesssim \|k^{b}_{\lambda}\|_{\mu}$ for all $\lambda\in\D$ then 
$$\frac{1 - |b(\lambda)|^2}{1 - |\lambda|^2} \lesssim \int_{\D^{-}} \left| \frac{1 - \overline{b(\lambda)} b(z)}{1 - \overline{\lambda} z}\right|^2 d \mu(z).$$ This says that 
\begin{eqnarray}\label{qqq}
1 - |b(\lambda)|^2 & \lesssim {\displaystyle \int_\D  \frac{1 - |\lambda|^2}{|1 - \overline{\lambda} z|^2} |1 - \overline{b(\lambda)} b(z)|^2 d \eta(z)} \\ \nonumber
& + {\displaystyle  \int_\T \frac{1 - |\lambda|^2}{|1 - \overline{\lambda} z|^2} |1 - \overline{b(\lambda)} b(z)|^2  h(z) dm(z)}\\ \nonumber 
& + {\displaystyle  \int_\T  \frac{1 - |\lambda|^2}{|1 - \overline{\lambda} z|^2} |1 - \overline{b(\lambda)} b(z)|^2 d \sigma(z),}
\end{eqnarray}
where  
$$d \mu = d \eta + h dm + d \sigma,$$
	$\eta = \mu|\D$,  $h = d \mu|\T/dm$, and $\sigma$ is the singular part of $\mu|\T$. Note that the three integrals on the right-hand side of \eqref{qqq} are well defined since we are assuming that $b$ is a $\mu$-admissible, bounded analytic function. 

Let $E$ be the measurable subset of $\zeta \in \T$ which satisfies the following conditions:
\begin{equation} \label{V1}
\mbox{$\zeta$ is a Lebesgue point of $h$;}
\end{equation}
\begin{equation} \label{V2}
\mbox{$b$ has a radial limit at $\zeta$;}
\end{equation}
\begin{equation} \label{V3}
\mbox{$(D \sigma)(\zeta) = 0$,}
\end{equation}
where $D \sigma$ is the symmetric derivative of $\sigma$ -- which is zero $m$-almost everywhere. 
 Notice that $E$ is a set of full Lebesgue measure in $\T$. 

Let $I$ be a sub-arc of $\T$ containing $\zeta \in E$. For this interval define 
$$S(I, y) := \{z \in S(I): |z| \geq 1 - y\}.$$

Integrating the left-hand side of \eqref{qqq} over $S(I, y)$ and dividing by $y$ we get 
$$\frac{1}{y} \int_{1 - y}^{1} \left( \int_{I} (1 - |b(r e^{i t})|^2) dt\right) r dr.$$ One can argue by the dominated convergence theorem that this quantity goes to 
$$\int_{I} (1 - |b(e^{i t})|^2) dt \quad \mbox{as $y \to 0$}.$$ 

Now integrate the first integral on the right-hand side of \eqref{qqq} over $S(I, y)$ and divide by $y$ to get (after applying Fubini's theorem)
\begin{equation} \label{first-int}
\int_{\D} \frac{1}{y} \int_{1 - y}^{1}\left( \int_{I} \frac{1 - r^2}{|1 - r e^{- i t} z|^2} |1 - \overline{b(r e^{i t})} b(z)|^2  dt \right) r dr d\eta(z).
\end{equation}
Note that the inner two integrals, i.e., 
$$ \frac{1}{y} \int_{1 - y}^{1}\left( \int_{I} \frac{1 - r^2}{|1 - r e^{- i t} z|^2} |1 - \overline{b(r e^{i t})} b(z)|^2 dt \right) r dr$$ is bounded above by a constant times 
$$\frac{1}{y} \int_{1 - y}^{1} \left(\int_{I} \frac{1 - r^2}{|1 - r e^{- i t} z|^2} dt \right) r dr$$ which approaches $\chi_{I}$ as $y \to 0$. 
Thus the integral in \eqref{first-int} is bounded above by a quantity which approaches 
\begin{equation} \label{muD}
\int_{\D} \chi_{I} d \eta,\quad \mbox{as $y \to 0$},
\end{equation}
which is equal to zero 
since $I \cap \D=\emptyset$.

Now integrate the second integral on the right-hand side of \eqref{qqq} over $S(I, y)$ and divide by $y$ to get (after applying Fubini's theorem)
$$\int_{\T} \frac{1}{y} \int_{1 - y}^{1} \left(\int_{I} \frac{1 - r^2}{|1 - e^{-i t} r \zeta|^2} |1 - \overline{b(r e^{i t})} b(\zeta)|^2 dt \right) r dr h(\zeta) dm(\zeta).$$ 
Apply Lemma \ref{HA} and Remark \ref{r-q-vqr-I}  to
the inner integral 
$$\int_{I} \frac{1 - r^2}{|1 - e^{-i t} r \zeta|^2} |1 - \overline{b(r e^{i t})} b(\zeta)|^2 dt $$ 
to see that this quantity approaches $(1 - |b(\zeta)|^2)^2 \chi_{I}$ as $r \to 1$. Thus the second integral on the right-hand side of \eqref{qqq} (over $S(I, y)$ and divide by $y$) approaches 
\begin{equation} \label{second-int-RHS}
\int_{I} (1 - |b|^2)^2 h dm,\quad\mbox{as $y \to 0$}.
\end{equation}

Now use the exact same proof used to get \eqref{muD} to show that the third integral on the right-hand side of \eqref{qqq} (over $S(I, y)$ and divide by $y$) approaches 
$$\int_{I} d\sigma,\quad\mbox{as $y \to 0$}.$$

Combining our results we get  
$$\int_{I} (1 - |b|^2) dm \lesssim \int_{I} (1 - |b|^2)^{2} h dm + \int_{I} d \sigma.$$
Now, 
remembering that $D\sigma$ is zero on $E$, we get the required result. 
\end{proof}

\begin{Remark} \label{R-sorted}
\begin{enumerate}
\item The above proof can be suitably modified to show that the family of Cauchy kernels $\{k_\lambda:\lambda\in\mathbb D\}$ can be used to test the reverse embedding. More precisely, if 
$\|k_\lambda\|_b\lesssim \|k_\lambda\|_\mu$ for every $\lambda$ in $\mathbb D$, then $\|f\|_b\lesssim \|f\|_\mu$ for every $f$ in $\mathscr H(b)_{\mu}$.  Since the kernels $k_{\lambda}$ are simpler than $k^{b}_{\lambda}$, this could, in certain circumstances, provide an easier test for reverse Carleson measures. 
\item Theorem \ref{MainThm} can be used to complete the proof of the converse of Corollary \ref{no-RCE-non-ext}. Indeed if $(1 - |b|)^{-1}$ belongs to $L^1$ then the measure $d\mu:=(1 - |b|)^{-1} dm$ is finite, $b$ is admissible with respect to this measure, and $\mu$ is a reverse Carleson measure for $\mathscr{H}(b)$ since $\mathrm{ess}\inf_{\T} (1 - |b|^2)(1-|b|)^{-1}\geq 1>0$.
\end{enumerate}
\end{Remark}

When $b\in H^\infty_1$ (not necessarily non-extreme), the proof of the implication
(2) $\Rightarrow$ (4) of Theorem \ref{MainThm} actually shows the following.

\begin{Theorem}\label{thm:reverse-cas-general}
Suppose $\mu \in M_{+}(\D^{-})$, $b \in H^{\infty}_{1}$ is $\mu$-admissible, and $h = d \mu|\T/dm$. If $\mu$ is a reverse Carleson measure for $\mathscr{H}(b)$ then
\begin{equation} \label{NRE}
(1 - |b|^2) \lesssim (1 - |b|^2)^{2} h.
\end{equation}
almost everywhere on $\T$. 
\end{Theorem}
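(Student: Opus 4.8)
The plan is to observe that the hypothesis that $\mu$ is a reverse Carleson measure already delivers statement (2) of Theorem \ref{MainThm}, and then to run the implication $(2) \Rightarrow (4)$ of that theorem verbatim, the point being that this particular direction never appeals to the non-extremality of $b$.

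First I would test the reverse inequality on the reproducing kernels. Since $b$ is $\mu$-admissible, each $k^{b}_{\lambda}$ lies in $\HH(b)_{\mu}$, so the defining inequality $\|f\|_{b} \lesssim \|f\|_{\mu}$ applied to $f = k^{b}_{\lambda}$ gives
\[
\|k^{b}_{\lambda}\|_{b} \lesssim \|k^{b}_{\lambda}\|_{\mu}, \qquad \lambda \in \D.
\]
The crucial observation is that the norm identity $\|k^{b}_{\lambda}\|_{b}^{2} = k^{b}_{\lambda}(\lambda) = (1 - |b(\lambda)|^2)/(1 - |\lambda|^2)$ follows from the reproducing property alone and therefore holds for \emph{every} $b \in H^{\infty}_{1}$. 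The Pythagorean mate $a$, whose very existence requires non-extremality, enters only in the directions $(3) \Leftrightarrow (4)$ and $(4) \Rightarrow (1)$ of Theorem \ref{MainThm}, and not in the step we are about to reuse.

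Next I would reproduce the harmonic-analytic core of the $(2) \Rightarrow (4)$ argument. Writing the kernel inequality in the expanded form \eqref{qqq}, I would split $\mu = \eta + h\,dm + \sigma$ with $\eta = \mu|\D$ and $\sigma$ the singular part of $\mu|\T$, integrate both sides over the truncated windows $S(I, y)$, divide by $y$, and let $y \to 0$. As in the earlier proof, the left side tends to $\int_{I} (1 - |b|^2)\,dm$; the $\D$-part contributes $\int_{\D} \chi_{I}\,d\eta = 0$ because $I \cap \D = \emptyset$; the absolutely continuous part tends to $\int_{I} (1 - |b|^2)^{2} h\,dm$ by Lemma \ref{HA} together with Remark \ref{r-q-vqr-I}; and the singular part tends to $\int_{I} d\sigma$. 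This yields, for every arc $I$,
\[
\int_{I} (1 - |b|^2)\,dm \lesssim \int_{I} (1 - |b|^2)^{2} h\,dm + \int_{I} d\sigma.
\]
Dividing by $m(I)$ and shrinking $I$ to a point $\zeta$ in the full-measure set of Lebesgue points at which $b$ has a radial limit and $D\sigma(\zeta) = 0$, the Lebesgue differentiation theorem makes the singular term disappear and converts this into the pointwise bound \eqref{NRE}, valid $m$-almost everywhere on $\T$.

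I do not expect a genuine obstacle here, since the computation is identical to the one already performed; the only thing requiring care is the bookkeeping check that non-extremality is nowhere invoked. It is worth emphasizing that the argument stops precisely at \eqref{NRE} and cannot be pushed to the stronger conclusion $\mathrm{ess}\inf_{\T} (1 - |b|^2) h > 0$ of Theorem \ref{MainThm}(4): passing from \eqref{NRE} to that ess-inf bound would require dividing by $1 - |b|^2$, which is legitimate only when $b$ is non-inner, whereas for inner $b$ the estimate \eqref{NRE} degenerates to the trivial inequality $0 \le 0$.
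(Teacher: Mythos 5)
Your proposal is correct and is essentially the paper's own argument: the paper proves Theorem~\ref{thm:reverse-cas-general} precisely by noting that the implication $(2) \Rightarrow (4)$ in the proof of Theorem~\ref{MainThm} never invokes non-extremality, so testing the reverse inequality on the kernels $k^{b}_{\lambda}$ and re-running that computation yields \eqref{NRE} for arbitrary $\mu$-admissible $b \in H^{\infty}_{1}$. Your closing remark about why the conclusion cannot be upgraded to the ess-inf condition (degeneracy for inner $b$) is also consistent with the paper's discussion following the theorem.
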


When $b$ is inner then the inequality in \eqref{NRE}, though true, yields no information. When $b$ is non-extreme then the condition that the Lebesgue measure of the set where $|b| = 1$ is zero along with Condition~\ref{NRE} is equivalent to Condition (4) in Theorem~\ref{MainThm}.  We have the following general corollary. 
\begin{Corollary}\label{Cor:cn-reverse}
Suppose $\mu \in M_{+}(\D^{-})$, $b \in H^{\infty}_{1}$ is not inner and $\mu$-admissible. Let  $h = d \mu|\T/dm$ and $Z_{b} := \{\zeta \in \T: |b(\zeta)| < 1\}$.  Assume that $\mu$ is a reverse Carleson measure for $\mathscr{H}(b)$, then $h \not \equiv 0$ and 
 \[
\int_{Z_b}\frac{1}{1-|b|}\,dm<+\infty.
\]
Moreover, if $m(Z_b)=1$, then $b$ is non-extreme. 
\end{Corollary}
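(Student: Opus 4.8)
The plan is to extract all three conclusions from Theorem~\ref{thm:reverse-cas-general}, which, under the present hypotheses (here $b$ is $\mu$-admissible and $\mu$ is a reverse Carleson measure), furnishes a constant $C>0$ with
\[
(1-|b|^2)\leq C\,(1-|b|^2)^2\,h
\]
$m$-almost everywhere on $\T$. Every claim will be read off this single pointwise inequality.

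First I would settle $h\not\equiv 0$. If instead $h=0$ $m$-almost everywhere, the displayed inequality would force $1-|b|^2\leq 0$, hence $|b|=1$ $m$-a.e.\ on $\T$; that is, $b$ would be inner, contradicting the standing hypothesis that $b$ is not inner. Thus $h\not\equiv 0$, so any reverse Carleson measure for $\mathscr{H}(b)$ must carry a non-trivial absolutely continuous part on $\T$.

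Next, for the integrability statement, I would restrict to $Z_b$, where $1-|b|^2>0$, and divide the inequality by $1-|b|^2$ to obtain $(1-|b|^2)^{-1}\leq C\,h$ $m$-a.e.\ on $Z_b$. Since $1-|b|=(1-|b|^2)/(1+|b|)\geq (1-|b|^2)/2$, we get $(1-|b|)^{-1}\leq 2\,(1-|b|^2)^{-1}\leq 2C\,h$ on $Z_b$, and integrating over $Z_b$ gives
\[
\int_{Z_b}\frac{1}{1-|b|}\,dm\leq 2C\int_{Z_b}h\,dm\leq 2C\int_\T h\,dm\leq 2C\,\mu(\D^{-})<\infty,
\]
since $h$ is the density of the absolutely continuous part of $\mu|\T$ and $\mu$ is finite.

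Finally, for the ``moreover'' part, if $m(Z_b)=1$ then the previous display becomes $\int_\T (1-|b|)^{-1}\,dm<\infty$. I would then use the elementary bound $-\log(1-|b|)=\log\frac{1}{1-|b|}\leq \frac{1}{1-|b|}$, valid because $(1-|b|)^{-1}\geq 1$ and $\log t\leq t$ for $t\geq 1$, and integrate to conclude $\int_\T\log(1-|b|)\,dm>-\infty$, which is exactly the non-extremality criterion \eqref{extreme-defn} for $b$. The whole argument is essentially bookkeeping once Theorem~\ref{thm:reverse-cas-general} is available; the only points needing care are the legitimacy of dividing by $1-|b|^2$ (valid precisely on $Z_b$) and the choice of the correct logarithmic inequality in the last step, so I do not expect a genuine analytic obstacle.
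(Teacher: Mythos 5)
Your proof is correct and follows essentially the same route as the paper's: both apply Theorem~\ref{thm:reverse-cas-general}, deduce $h\not\equiv 0$ from $b$ not being inner, divide the inequality \eqref{NRE} on $Z_b$ to get $(1-|b|)^{-1}\lesssim h$ and use $h\in L^1(\T)$, and then pass from integrability of $(1-|b|)^{-1}$ to that of $\log(1-|b|)$ for the non-extremality claim. (Only a trivial slip: to reach $(1-|b|^2)^{-1}\leq C h$ you divide by $(1-|b|^2)^2$, not by $1-|b|^2$; the conclusion is unaffected.)
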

\begin{proof}
By Theorem~\ref{thm:reverse-cas-general}, the inequality \eqref{NRE} holds and since $b$ is not inner, this inequality implies that $h \not \equiv 0$. On $Z_b$ we now obtain from \eqref{NRE} that 
\[
1\lesssim (1-|b|^2)h,
\]
that is $(1-|b|)^{-1}\lesssim h$ a.e. on $Z_b$. Since $h\in L^1(\T)$, we see that $\int_{Z_b}(1-|b|)^{-1}\,dm$ is finite. If furthermore $m(Z_b)=1$, the integrability of $(1-|b|)^{-1}$ implies that of $\log(1-|b|)$ and so $b$ is non-extreme. 
 \end{proof}
 
Our results have an interesting connection to sampling sequences for $\HH(b)$ spaces. Recall that if $\HH$ is a reproducing kernel Hilbert space on a set $\Omega$, and if $k_\lambda^{\HH}$ denotes its reproducing kernel at point $\lambda$, then a sequence $(\lambda_n)_{n\geq 1}\subset\Omega$ is called a \emph{sampling sequence} for $\HH$ if 
\[
\|f\|_\HH^2\asymp \sum_{n=1}^{\infty}\|k_{\lambda_n}^\HH\|_\HH^{-2} |f(\lambda_n)|^2,
\]  
for all $f\in\HH$.
 \begin{Corollary}\label{Cor:sampling}
Let $b\in H_1^\infty$. If $\HH(b)$  admits a sampling sequence, then necessarily $b$ is an inner function.
\end{Corollary}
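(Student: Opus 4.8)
The plan is to deduce Corollary~\ref{Cor:sampling} as a quick consequence of the reverse Carleson machinery already developed, by turning a sampling sequence into a measure and applying the dichotomy between the inner and non-inner cases. First I would argue the contrapositive: assume $b \in H^\infty_1$ is \emph{not} inner and show that $\HH(b)$ cannot admit a sampling sequence. Given a putative sampling sequence $(\lambda_n)_{n \geq 1} \subset \D$, the natural move is to associate to it the discrete measure
\[
\mu := \sum_{n=1}^{\infty} \frac{\delta_{\lambda_n}}{\|k^b_{\lambda_n}\|_b^2},
\]
so that the sampling identity $\|f\|_b^2 \asymp \sum_n \|k^b_{\lambda_n}\|_b^{-2}|f(\lambda_n)|^2$ reads exactly as $\|f\|_b^2 \asymp \|f\|_\mu^2$ for all $f \in \HH(b)$. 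In particular the lower bound $\|f\|_b \lesssim \|f\|_\mu$ says that $\mu$ is a reverse Carleson measure for $\HH(b)$.

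The key observation is that this $\mu$ is carried entirely by the open disk $\D$: since each $\lambda_n \in \D$, we have $\mu(\T) = 0$, hence $h = d\mu|\T/dm \equiv 0$. Now I would invoke Corollary~\ref{Cor:cn-reverse} (equivalently Theorem~\ref{thm:reverse-cas-general}): because $b$ is not inner and $\mu$ is a reverse Carleson measure, that corollary forces $h \not\equiv 0$. This is a direct contradiction. Note that $\mu$-admissibility is automatic here: since $\mu$ is carried by $\D$ we have $\mu(\T)=0$, so $\HH(b)_\mu = \HH(b)$ and every function (in particular $b$ itself) is trivially $\mu$-admissible. Therefore no sampling sequence can exist unless $b$ is inner, which is the assertion.

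The main obstacle, such as it is, is purely bookkeeping rather than conceptual: one must confirm that the hypotheses of Corollary~\ref{Cor:cn-reverse} are genuinely met, namely that a sampling sequence does produce a \emph{finite} positive Borel measure on $\D^-$ in the reverse-Carleson sense of the paper. The finiteness and positivity of $\mu$ are immediate from its definition as a sum of weighted point masses. The only point needing a word of care is that the sampling inequality is assumed to hold for all $f \in \HH(b)$, which is exactly the class over which the reverse Carleson inequality must be tested when $\mu(\T)=0$ (so that $\HH(b)_\mu = \HH(b)$); thus $\mu$ satisfies the definition of a reverse Carleson measure verbatim. With that verification in place, the contradiction via $h \equiv 0$ versus $h \not\equiv 0$ closes the argument, and the case $b$ inner is excluded from the conclusion precisely because inner $b$ is the situation where reverse Carleson measures \emph{can} be carried by $\D$, as remarked after Theorem~\ref{MainThm}.
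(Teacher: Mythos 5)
Your proposal is correct and takes essentially the same route as the paper's own proof: both convert the sampling sequence into the discrete measure $\mu=\sum_{n\geq 1}\|k^b_{\lambda_n}\|_b^{-2}\delta_{\lambda_n}$, note that $d\mu|\T/dm\equiv 0$ since $\mu$ is carried by $\D$, and derive a contradiction with Corollary~\ref{Cor:cn-reverse}. One small caveat: finiteness of $\mu$ is not quite ``immediate from its definition'' as you claim (the paper is silent on this too), but it follows at once by applying the upper sampling bound to $f=k^b_0$, since $|k^b_0(\lambda_n)|\geq 1-|b(0)|>0$ when $b$ is not inner.
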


\begin{proof}
Assume that there exists a sequence $(\lambda_n)_{n\geq 1}\subset\D$ which is a sampling sequence for $\HH(b)$. This implies, in particular, that the measure 
\[
\mu:=\sum_{n=1}^{+\infty}\|k_{\lambda_n}^b\|_b^{-2}\delta_{\lambda_n}
\]
is a reverse Carleson measure for $\HH(b)$. But since $d \mu|\T/dm\equiv 0$, this contradicts Corollary~\ref{Cor:cn-reverse}. 
\end{proof}
This Corollary generalizes a result obtained in \cite{MR2514455}. Note also that the proof given above shows that $\HH(b)$ does not have an orthogonal basis of reproducing kernels if $b$ is not inner. This result was already proved in \cite{MR2159461} using a different method based on spectral perturbation and originally coming from Clark's theory.

\begin{Example}
Let $b$ be the outer function whose modulus satisfies 
$$|b(e^{i \theta})| = 1 - \exp(- \frac{1}{\theta^2}).$$
Then 
$$\int_{\T} \log \frac{1}{1  - |b|} dm \asymp \int_{0}^{2 \pi} \frac{1}{\theta^2}d \theta  = \infty$$
and so $b$ is  extreme. Moreover, $Z_{b} = \T \setminus \{1\}$. In particular, $m(Z_b)=1$ and by Corollary~\ref{Cor:cn-reverse}, $\mathscr{H}(b)$ will have no reverse Carleson measures. 
\end{Example}

Conspicuously missing from this discussion is the case where $b$ is extreme and not inner and for which 
$$\int_{Z_b} \frac{1}{1 - |b|} dm < \infty.$$ An example of this would be the outer function $b$ whose modulus is $1$ on $\T \cap \{\Im z > 0\}$ and $1/2$ on $\T \cap \{\Im z < 0\}$. Do such $\mathscr{H}(b)$ spaces have reverse Carleson measures? 

\section{An analogous condition for direct embeddings}\label{section5}

In this section, we again assume $b$ is non-extreme  and $a$ is defined by (\ref{def of a}). We also recall the definition of Carleson measure from the introduction ($\mathscr{H}(b)_{\mu}  = \mathscr{H}(b)$ and $\|f\|_{\mu} \lesssim \|f\|_{b}$ for every $f \in \mathscr{H}(b)$). We remind the reader that $\mathscr{M}(a)=aH^{2}$ is contractively contained in $\HH(b)$, i.e., for every $g\in H^{2}$, 
\begin{equation}
\label{M(a)}
\Vert ag\Vert_{b}\leq \Vert ag\Vert_{\mathscr{M}(a)} = \Vert g\Vert_{2}.
\end{equation}
In particular, $a\in\HH(b)$ and thus, if $\mu$ is a Carleson measure for $\HH(b)$, then necessarily $a$ is $\mu$-admissible.  

\begin{Proposition} \label{P-direct-a}
Let $b\in H^\infty_1$ be  non-extreme and let $\mu \in M_{+}(\D^{-})$ be a Carleson measure for $\mathscr{H}(b)$. Then $d \nu = |a|^2 d \mu$ is a Carleson measure for $H^{2}$.
\end{Proposition}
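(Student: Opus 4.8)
The goal is to show that if $\mu$ is a Carleson measure for $\HH(b)$, meaning $\|f\|_\mu \lesssim \|f\|_b$ for all $f \in \HH(b)$, then $d\nu = |a|^2\,d\mu$ is a Carleson measure for $H^2$, i.e.\ $\|g\|_\nu \lesssim \|g\|_2$ for all $g \in H^2$. The plan is to exploit the contractive containment $\mathscr{M}(a) = aH^2 \hookrightarrow \HH(b)$ recorded in \eqref{M(a)}: for every $g \in H^2$ we have $ag \in \HH(b)$ with $\|ag\|_b \leq \|g\|_2$. The idea is to feed the test functions $f = ag$ into the hypothesized Carleson inequality for $\HH(b)$ and then recognize the resulting left-hand side as the $\nu$-norm of $g$.

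First I would take an arbitrary $g \in H^2$ and set $f = ag \in \mathscr{M}(a) \subset \HH(b)$. Applying the Carleson hypothesis to $f$ gives
\[
\|ag\|_\mu \lesssim \|ag\|_b \leq \|g\|_2,
\]
where the second inequality is \eqref{M(a)}. The next step is to rewrite the left-hand side in terms of $\nu$. Since $d\nu = |a|^2\,d\mu$, one has, at least formally,
\[
\|g\|_\nu^2 = \int_{\D^-} |g|^2\,d\nu = \int_{\D^-} |g|^2 |a|^2\,d\mu = \int_{\D^-} |ag|^2\,d\mu = \|ag\|_\mu^2,
\]
so that $\|g\|_\nu = \|ag\|_\mu \lesssim \|g\|_2$, which is exactly the $H^2$-Carleson inequality for $\nu$. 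Combining these gives the conclusion for all $g \in H^2$.

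The main obstacle to watch is the admissibility/well-definedness bookkeeping on the boundary $\T$, since $\mu$ (and hence the integrands) may charge part of $\T$. I would note that because $\mu$ is a Carleson measure for $\HH(b)$ we have $\HH(b)_\mu = \HH(b)$, so in particular every $f = ag \in \mathscr{M}(a) \subset \HH(b)$ is $\mu$-admissible and its boundary values are defined $\mu$-a.e.; thus $\|ag\|_\mu$ makes sense. To then read $\|ag\|_\mu$ as $\|g\|_\nu$, the point to verify is that the boundary value of the product $ag$ equals the product of the boundary values $m$-a.e.\ (hence $|ag|^2 = |a|^2|g|^2$ on the carrier of $\mu|\T$), which is standard for $H^\infty \cdot H^2$ nontangential limits, and that $g$ itself is being integrated against $\nu$ via boundary values where $\mu$ charges $\T$. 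Once the identity $\|ag\|_\mu = \|g\|_\nu$ is justified, the chain of inequalities closes immediately. The argument is essentially a one-line transport of the inequality through the isometry $g \mapsto ag$ from $H^2$ onto $\mathscr{M}(a)$, and the only real care needed is the measure-theoretic identification of the two integrals.
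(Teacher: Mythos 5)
Your proof is correct, and it takes a genuinely different (and in fact more self-contained) route than the paper. The paper also feeds functions of the form $ag$ into the hypothesis via the contraction \eqref{M(a)}, but it only uses the kernels $g = k_\lambda$: it derives $\Vert k_\lambda\Vert_\nu \lesssim \Vert k_\lambda\Vert_2$ for all $\lambda \in \D$ and then invokes the reproducing kernel thesis for $H^2$ (Carleson's embedding theorem) to upgrade this to the full embedding. You instead test on \emph{all} of $aH^2$, so the embedding $\Vert g\Vert_\nu \lesssim \Vert g\Vert_2$ comes out directly for every $g \in H^2$ and no appeal to the RKT is needed; this is more elementary. What the paper's choice of test functions buys is that the measure-theoretic identification is trivial: $k_\lambda$ is continuous and zero-free on $\D^-$, so $\Vert a k_\lambda\Vert_\mu = \Vert k_\lambda\Vert_\nu$ holds with no boundary-value bookkeeping, whereas you must worry (as you note) about general $g \in H^2$ on the part of $\T$ charged by $\mu$, which may be singular with respect to $m$, so that $g$ need not have nontangential limits $\mu$-a.e. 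One refinement to your write-up: the clean way to close this gap is to observe that you only need the \emph{inequality} $\Vert g\Vert_\nu \le \Vert ag\Vert_\mu$, not the identity. Indeed $\nu$ assigns no mass to the set where the boundary value $a^*$ vanishes; on the set where $a^* \neq 0$, the quotient $g^* = (ag)^*/a^*$ exists $\mu$-a.e.\ because $ag \in \HH(b) = \HH(b)_\mu$ is $\mu$-admissible, and there $|g^*|^2|a^*|^2 = |(ag)^*|^2$. (On the exceptional set $\{a^*=0\}$ the identity as you wrote it could degenerate to $0\cdot\infty$, but the inequality survives since you are simply discarding a nonnegative term of $\Vert ag\Vert_\mu^2$.) With that adjustment your chain $\Vert g\Vert_\nu \le \Vert ag\Vert_\mu \lesssim \Vert ag\Vert_b \le \Vert g\Vert_2$ is complete.
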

\begin{proof} 
For $\lambda \in \D$ and $k_{\lambda}$ the standard reproducing kernel for $H^2$, apply the direct embedding inequality to  $ak_{\lambda}$ and use \eqref{M(a)} to obtain
\[
\Vert k_{\lambda}\Vert_{2}=\Vert ak_{\lambda}\Vert_{\mathscr{M}(a)}
 \ge \|ak_{\lambda}\|_b \gtrsim \Vert ak_{\lambda}\Vert_{\mu}
 =\Vert k_{\lambda}\Vert_{\nu},\quad \lambda\in\D,
\]
which implies, by the reproducing kernel thesis for $H^2$, that $\nu$ is a Carleson measure for $H^ 2$.
\end{proof}

\begin{Remark}
We will see in a moment that without additional assumptions on $a$ and $b$, the converse is not always true. 
\end{Remark}

For our next set of results we need some additional facts concerning $\HH(b)$ spaces.  For $\alpha \in \T$ let $\sigma_\alpha$ be the  Aleksandrov--Clark measure \cite{CRM, Clark72, PolSa} associated with the function $\bar\alpha b$, i.e., $\sigma_{\alpha}$ is (via a classical theorem of Herglotz) the unique positive measure on $\T$ satisfying
\begin{equation} \label{AC-sigma-alpha}
\frac{1-|b(z)|^2}{|1-\bar\alpha b(z)|^2}=\int_\T \frac{1-|z|^2}{|1-\bar \zeta z|^2}\,d\sigma_\alpha(\zeta),\quad z\in\D.
\end{equation}
According to \cite[IV-10]{Sa}, since $b$ is non-extreme,  $\sigma_\alpha \ll m$ for $m$-almost every $\alpha\in\T$. 

Now let 
$$F_\alpha:=\frac{a}{1-\bar\alpha b}$$
and note that $F_{\alpha}$ belongs to $H^2$ and is outer.  From \cite[IX-4]{Sa} we know that
$\mathscr{M}(a)=\HH(b)$ (with equivalent norms) if and only if there is an $\alpha\in\T$ such that  $\sigma_\alpha \ll m$ and $|F_\alpha|^2\in (A_2)$.  

The following theorem may seem overly technical at first glance but it will have a very useful corollary (See Corollary \ref{Cor:rationnel} below).

\begin{Theorem}\label{Thm:direct-embedding}
Let $(a,b)$ be a Pythagorean pair, $\mu \in M_{+}(\D^{-})$, $\alpha\in\T$  such that $\sigma_\alpha \ll m$. Assume  there exists a polynomial $p$ having all of its roots in $\T$ and an $f \in H^2$ satisfying the conditions $|f|^2\in (A_2)$ and $F_\alpha=pf$. Then the following 	assertions are equivalent: 
\begin{enumerate}
\item The measure $\mu$ is a Carleson measure for $\HH(b)$;
\item The function $a$ is $\mu$-admissible and the measure $|a|^2\,d\mu$ is a Carleson measure for $H^2$. 
\end{enumerate}
\end{Theorem}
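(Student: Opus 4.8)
The plan is to establish the two implications separately, essentially all of the content lying in $(2)\Rightarrow(1)$. The implication $(1)\Rightarrow(2)$ is already in hand: recall from \eqref{M(a)} that $\mathscr{M}(a)=aH^2$ is contractively contained in $\HH(b)$, so in particular $a\in\HH(b)$; hence if $\mu$ is a Carleson measure for $\HH(b)$ then $\HH(b)_\mu=\HH(b)$ forces $a$ to be $\mu$-admissible. That $|a|^2\,d\mu$ is then a Carleson measure for $H^2$ is exactly Proposition~\ref{P-direct-a}. This direction uses neither $\alpha$ nor the factorization $F_\alpha=pf$.

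For $(2)\Rightarrow(1)$ I would first dispose of the \emph{model case}, in which $p$ is constant. Then $|F_\alpha|^2=|c|^2|f|^2\in(A_2)$ and $\sigma_\alpha\ll m$, so by the criterion of Sarason recalled just before the statement (\cite[IX-4]{Sa}) one has $\mathscr{M}(a)=\HH(b)$ \emph{with equivalent norms}. Writing a generic $f=ag\in aH^2=\HH(b)$ and using that $|a|^2\,d\mu$ is $H^2$-Carleson,
$$\int_{\D^-}|f|^2\,d\mu=\int_{\D^-}|g|^2\,|a|^2\,d\mu\lesssim\|g\|_2^2=\|f\|_{\mathscr{M}(a)}^2\asymp\|f\|_b^2,$$
which is the desired embedding; the $\mu$-admissibility of every such $f=ag$ follows from that of $a$ together with a separate verification — using that a positive measure on $\T$ which is $H^2$-Carleson is absolutely continuous away from the zero set of $a$ — that $ag$ has nontangential limits $\mu$-a.e. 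Thus when $\mathscr{M}(a)=\HH(b)$ the theorem is immediate, and the entire difficulty is the presence of the boundary roots of $p$.

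My strategy for the general case is a \emph{reduction to the model case}. Factor $p(z)=c\prod_j(z-\zeta_j)^{m_j}$ with $\zeta_j\in\T$; since $a=(1-\bar\alpha b)pf$, the outer function $a$ vanishes at the $\zeta_j$ and the whole $(A_2)$-defect of $|F_\alpha|^2=|p|^2|f|^2$ is concentrated there. I would then attach to the given data an auxiliary non-extreme function $b_1$, with Pythagorean mate $a_1$ and the \emph{same} $\alpha$, whose associated outer function is $F_\alpha^{(1)}=f$; because $|f|^2\in(A_2)$ and (one checks) $\sigma_\alpha^{(1)}\ll m$, the model case applies to $b_1$, giving $\mathscr{M}(a_1)=\HH(b_1)$. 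The relation $F_\alpha=p\,F_\alpha^{(1)}$ encodes a correspondence between $\HH(b)$ and $\HH(b_1)$ implemented, up to a finite-dimensional discrepancy governed by the $\zeta_j$, by multiplication by the polynomial $p$; transporting the Carleson condition for $|a|^2\,d\mu$ (equivalently $|a_1|^2\,d\mu$, since $|a|^2=|p|^2|a_1|^2$ with $|p|$ bounded on $\D^-$) across this correspondence should yield the embedding and the admissibility for all of $\HH(b)$ from the model estimate for $b_1$.

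The main obstacle is precisely this transfer through $p$. Because the $\HH(b)$-norm is strictly smaller than the $\mathscr{M}(a)$-norm away from the model case, one cannot simply estimate on $\mathscr{M}(a)=aH^2$: the control must be in the genuine $\HH(b)$-norm. Making the correspondence between $\HH(b)$ and $\HH(b_1)$ rigorous therefore requires understanding how multiplication by the boundary-root polynomial $p$ acts between these de Branges--Rovnyak spaces, identifying the finite-dimensional correction (of dimension $\sum_j m_j$) that repairs the failure of $|p|^2\in(A_2)$ at each $\zeta_j$, and proving that the induced splitting is bounded in the $\HH(b)$-norm. The delicate point is the boundary analysis at the roots $\zeta_j$: one must match the order $m_j$ of vanishing of $a$ against the Carleson growth of $|a|^2\,d\mu$ on the windows $S(I)$ shrinking to $\zeta_j$, and it is here that the hypotheses $|f|^2\in(A_2)$ and the $\mu$-admissibility of $a$ do their real work.
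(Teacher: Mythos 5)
Your direction $(1)\Rightarrow(2)$ and your treatment of the model case ($p$ constant, so that $|F_\alpha|^2\in (A_2)$ and $\mathscr{M}(a)=\HH(b)$ with equivalent norms) are correct and coincide with the paper's argument. But the entire content of the theorem is the case where $p$ has roots on $\T$, and there your proposal stops exactly where the proof has to begin: you yourself label the transfer through $p$ ``the main obstacle'' and list what would still have to be understood, identified, and proved. This is a genuine gap, not a detail. The proposed reduction --- building an auxiliary Pythagorean pair $(a_1,b_1)$ with $F_\alpha^{(1)}=f$ and transporting the embedding via multiplication by $p$ ``up to a finite-dimensional discrepancy'' --- is never executed, and it is not a routine verification: for non-extreme $b_1$, multiplication by a polynomial maps $\HH(b_1)$ into \emph{itself} (shift invariance), not into $\HH(b)$, so relating $\HH(b)$ to $p\,\HH(b_1)$ plus a finite-dimensional piece is a comparison theorem between two different de Branges--Rovnyak spaces, of the Ball--Kriete type; the paper explicitly remarks (in the discussion following Corollary~\ref{Cor:rationnel}) that such inclusion results are difficult and that its own argument is designed to avoid them.

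What is missing is the structural fact the paper actually uses: by Sarason \cite[X-18]{Sa}, under the hypothesis $F_\alpha=pf$ with $|f|^2\in(A_2)$ and $\sigma_\alpha\ll m$, the space $\mathscr{M}(a)$ is \emph{closed} in $\HH(b)$ with codimension exactly $N=\deg p$. Combined with the elementary observation that $\mathcal P_{N-1}\cap\mathscr{M}(a)=\{0\}$ (if $q=ag$ then $q/p=(1-\bar\alpha b)fg\in H^1$, which forces $q$ to vanish to order $m_i$ at each root $\zeta_i\in\T$, hence $q\equiv 0$ by degree count), this yields the direct, bounded decomposition
\[
\HH(b)=\mathscr{M}(a)\dotplus\mathcal P_{N-1},\qquad \|ag+q\|_b\asymp\|ag\|_b+\|q\|_b .
\]
The embedding then splits into two separate estimates: on $\mathscr{M}(a)$, the $H^2$-Carleson property of $|a|^2\,d\mu$ together with the open mapping theorem (which gives $\|ag\|_{\mathscr{M}(a)}\lesssim\|ag\|_b$ because $\mathscr{M}(a)$ is closed and contractively contained in $\HH(b)$) yields $\|ag\|_\mu\lesssim\|ag\|_b$; on $\mathcal P_{N-1}$, finiteness of $\mu$ and equivalence of norms on a finite-dimensional space yield $\|q\|_\mu\lesssim\|q\|_b$. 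Your outline contains none of these ingredients --- in particular, it never exploits the specific hypotheses (roots of $p$ on $\T$, $|f|^2\in(A_2)$) to produce a bounded splitting of $\HH(b)$ --- so as it stands the proposal proves the theorem only in the case $\mathscr{M}(a)=\HH(b)$.
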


\begin{proof}
The implication $(1)\Rightarrow (2)$ has already been proved. Let us now focus on the reverse implication and assume that $d\nu:=|a|^2 d\mu$ is a Carleson measure for $H^2$. Write 
\[
p(z)=\prod_{i=1}^s(z-\zeta_i)^{m_i},
\]
where, by hypothesis, $\zeta_i\in\T$. Let 
$$N=m_1+m_2+\dots+m_s$$ denote the degree of $p$. According to \cite[X-18]{Sa}, we know that $\mathscr{M}(a)$ is closed in $\HH(b)$ with co-dimension $N$. 

If $N=0$ then $|F_\alpha|^2\in (A_2)$ and we know that $\mathscr{M}(a)=\HH(b)$ with equivalent norms. Then for every $f=ag\in\HH(b)$, we have 
\[
\|f\|_\mu=\|g\|_\nu\lesssim \|g\|_2=\|ag\|_{\mathscr{M}(a)}\asymp\|f\|_b,
\]
which proves the desired embedding. 

Now assume that $N\geq 1$ and let us first show that  $\HH(b)$ can be written as
\begin{equation} \label{star-decomp}
\HH(b)=\mathscr{M}(a) \dotplus \mathcal P_{N-1},
\end{equation}
where the sum 
in the above decomposition is direct (not necessarily orthogonal). First note that since $b$ is  non-extreme, the polynomials belong to $\HH(b)$. Now let $q\in \mathcal P_{N-1}\cap \mathscr{M}(a)$. That means that the polynomial $q$ can be written as $q=ag$ for some $g\in H^2$. But then, since
$$pf= \frac{a}{1-\ov{\alpha}b},$$ we see that the rational function 
$$\frac{q}{p}=(1-\bar\alpha b)fg$$ belongs to $H^1$. This is clearly
possible if and only if the poles of $q/p$ are outside $\D^{-}$. In particular, we see that the polynomial $q$ should have a zero of order at least $m_i$ at
each point $\zeta_i$. Since the degree of $q$ is less or equal to $N-1$, this necessary implies that $q=0$. Hence the sum $\mathscr{M}(a)\dotplus \mathcal P_{N-1}$ is direct. Now since $$\mbox{dim }\mathcal P_{N-1}=N=\mbox{codim }\mathscr{M}(a),$$ we obtain \eqref{star-decomp}.
 
In particular, the angle between the subspaces $\mathscr{M}(a)$ and $\mathcal P_{N-1}$ is strictly positive which means that
\[
\|f\|_b\asymp \|ag\|_{b}+\|p\|_b,
\]
for every $f=ag+p\in \HH(b)$ where $ag\in\mathscr{M}(a)$ and $p\in\mathcal P_{N-1}$. Moreover, 
since $\mathscr{M}(a)$ is a closed subspace of $\HH(b)$, contractively embedded
($\|ag\|_b\leq \|ag\|_{\mathscr{M}(a)}$, $g\in H^2$), the open mapping theorem
shows 
that 
\[
\|ag\|_{\mathscr{M}(a)}\lesssim  \|ag\|_b,\quad g\in H^2.
\]

Since $\nu$ is a Carleson measure for $H^2$, we have, for $ag\in~\mathscr{M}(a)$, 
\begin{eqnarray*}
\|ag\|_\mu&=&\|g\|_\nu\lesssim \|g\|_2=\|ag\|_{\mathscr{M}(a)}\\
&\lesssim & \|ag\|_b.
\end{eqnarray*}
On the other hand, if $p(z)=\sum_{n=0}^{N-1}a_nz^n \in \mathcal P_{N-1}$, we see, since $\mu$ is a finite measure, that
\begin{eqnarray*}
\|p\|_\mu &\leq &\sum_{n=0}^{N-1} |a_n|\|z^n\|_\mu\lesssim \sum_{n=0}^{N-1}|a_n| \\
&\lesssim & \left(\sum_{n=0}^{N-1}|a_n|^2\right)^{1/2}\\
&=& \|p\|_2\leq \|p\|_b.
\end{eqnarray*}
We conclude that for $f=ag+p$, where $ag\in\mathscr{M}(a)$ and $p\in\mathcal P_{N-1}$, we have 
\[
\|f\|_\mu=\|ag+p\|_\mu\leq \|ag\|_\mu+\|p\|_\mu \lesssim \|ag\|_b+\|p\|_b
\asymp \|f\|_b. \qedhere
\]  
\end{proof}

A nice application of Theorem \ref{Thm:direct-embedding} is the following corollary. Note that if $b$ is a rational function then so is $a$ \cite[Remark 3.2]{MR2418122} so there is no need to impose any $\mu$ admissibility conditions. 

\begin{Corollary}\label{Cor:rationnel}
Let $b$ be a rational and non-extreme  and $\mu \in M_{+}(\D^{-})$.  Then the following assertions are equivalent: 
\begin{enumerate}
\item The measure $\mu$ is a Carleson measure for $\HH(b)$;
\item The measure $|a|^2\,d\mu$ is a Carleson measure for $H^2$. 
\end{enumerate}
\end{Corollary}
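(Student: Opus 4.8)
The plan is to deduce the corollary directly from Theorem~\ref{Thm:direct-embedding} by verifying that its hypotheses are automatically satisfied when $b$ is rational and non-extreme, and by observing that the $\mu$-admissibility clause appearing in assertion~(2) of that theorem becomes vacuous here. Since $b$ is rational and non-extreme, its Pythagorean mate $a$ is also rational \cite[Remark 3.2]{MR2418122}; being in $H^\infty_1$, both $a$ and $b$ have all their poles strictly outside $\D^{-}$, so $a$ extends continuously (indeed analytically) to $\D^{-}$ and hence has ordinary boundary limits at every point of $\T$. Consequently $a$ is $\mu$-admissible for \emph{every} $\mu\in M_+(\D^{-})$, so assertion~(2) of Theorem~\ref{Thm:direct-embedding} collapses to the single requirement that $|a|^2\,d\mu$ be a Carleson measure for $H^2$, which is precisely assertion~(2) of the corollary. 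This admissibility reduction is the clean point that makes the two statements line up.

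It then remains to produce an $\alpha\in\T$, a polynomial $p$ with roots on $\T$, and an $f\in H^2$ with $|f|^2\in(A_2)$ and $F_\alpha=pf$. First I would record that, since $b$ is non-extreme it is not inner, so $|b|<1$ $m$-a.e.; because $|b|^2$ restricts to a rational function of $\zeta$ on $\T$, the set $Z:=\{\zeta\in\T:|b(\zeta)|=1\}$ is finite. I would then choose $\alpha\in\T$ so that simultaneously $\sigma_\alpha\ll m$ (possible, since this holds for $m$-a.e.\ $\alpha$, as recalled after~\eqref{AC-sigma-alpha}) and $\alpha\notin b(Z)$ (excluding only finitely many values). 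With this choice $1-\bar\alpha b$ has no zero on $\T$ — a zero there would force $|b|=1$, i.e.\ a point of $Z$ — and being rational with poles off $\D^{-}$ it satisfies $|1-\bar\alpha b|\asymp 1$ on $\T$. Hence on $\T$ we have $|F_\alpha|^2=|a|^2/|1-\bar\alpha b|^2\asymp|a|^2=1-|b|^2$.

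The factorization is then transparent. The function $F_\alpha=a/(1-\bar\alpha b)$ is rational and, by the choice of $\alpha$, analytic on all of $\D^{-}$; it lies in $H^2$ and is outer (as recalled in the text), and its only zeros in $\D^{-}$ lie on $\T$, precisely at the points of $Z$, with certain finite multiplicities $n_i$. Setting $p(z):=\prod_i (z-\zeta_i)^{n_i}$, whose roots all lie on $\T$, the quotient $f:=F_\alpha/p$ is rational, analytic on $\D^{-}$ and non-vanishing on $\T$; therefore $|f|$ is bounded above and below on $\T$, so $f\in H^\infty\subset H^2$ and $|f|^2\asymp 1$ trivially satisfies $(A_2)$, while $F_\alpha=pf$ by construction. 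Theorem~\ref{Thm:direct-embedding} now applies and, combined with the first paragraph, yields the claimed equivalence. (When $\|b\|_\infty<1$ the set $Z$ is empty, $p\equiv 1$, and one simply lands in the $N=0$ case of the theorem.)

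I expect the only genuinely delicate point to be the selection of $\alpha$: one must check that the two constraints $\sigma_\alpha\ll m$ and $\alpha\notin b(Z)$ can be satisfied at once, which they can since each excludes only a null (indeed finite) set of $\alpha$'s. Once $1-\bar\alpha b$ is bounded away from $0$ on $\T$, everything else is routine pole/zero bookkeeping for rational functions, and the continuity of $a$ on $\D^{-}$ disposes of the gap between the two forms of assertion~(2).
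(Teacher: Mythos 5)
Your proposal is correct and follows essentially the same route as the paper's own proof: both reduce the corollary to Theorem~\ref{Thm:direct-embedding} by noting that the rational mate $a$ is continuous on $\D^{-}$ (so $\mu$-admissibility is automatic), choosing $\alpha\in\T$ with $\sigma_\alpha\ll m$ and $\alpha$ avoiding the finitely many values of $b$ at the points of $\T$ where $|b|=1$, and then factoring $F_\alpha=pf$ with $p$ carrying the boundary zeros and $f$ bounded above and below on $\T$ so that $|f|^2\in(A_2)$. Your factorization via the zeros of $F_\alpha$ is the same in content as the paper's factorization $q=q_1q_2$ of the numerator of $a$, giving $f=q_2/\bigl(r(1-\bar\alpha b)\bigr)$.
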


\begin{proof}
According to Theorem~\ref{Thm:direct-embedding}, it is sufficient to prove that there exists an $\alpha\in\T$ such that $\sigma_{\alpha}$ from \eqref{AC-sigma-alpha} satisfies $\sigma_{\alpha} \ll m$, a polynomial $p$  having all of its roots on $\T$, and a function $f \in H^2$ with $|f|^2\in (A_2)$ such that $F_\alpha=pf$.

We first observe that the associated outer function $a$ is also a rational function (see \cite[Remark 3.2]{MR2418122}).
Write $a=q/r$ where $q$ and $r$ are two polynomials with $GCD(q,r)=1$. 
Then, necessarily, $r(z)\neq 0$ for every $z\in\D^{-}$ and let us denote by $\zeta_i$, $1\leq i\leq N$, the zeros of $q$ on $\T$.  Note that 
since these zeros are the same as those of $a$ we actually have
\[
\{z\in\T:|b(z)|=1\}=\{\zeta_1,\dots,\zeta_N\}.
\]
Now choose $\alpha\in\T\setminus\{b(\zeta_1),\dots,b(\zeta_N)\}$ such that $\sigma_\alpha \ll m$ (which is always possible because $\sigma_\alpha \ll m$ for $m$-almost every $\alpha\in\T$). Moreover, according to the choice of $\alpha$, the function $1-\bar\alpha b$, which is continuous on $\D^{-}$, cannot vanish and hence
\begin{equation}\label{eq:r-1-baralpha-b}
\inf_{z\in \D^{-}}|r(z)(1-\bar\alpha b(z))|>0. 
\end{equation}
It remains to factor the polynomial $q$ as $q_1q_2$ where $q_1$ has all of its roots on $\T$ and $q_2$ has all of its roots outside $\D^-$. Then 
\[
F_\alpha=\frac{a}{1-\bar\alpha b}=q_1 \frac{q_2}{r(1-\bar\alpha b)}=q_1 f,  
\]
where $$f=\frac{q_2}{r(1-\bar\alpha b)}.$$ In view of \eqref{eq:r-1-baralpha-b}, we easily see that $f$ and $1/f$ are continuous on $\D^{-}$ which implies $|f|^2\in (A_2)$. 

The proof is completed by an application of Theorem~\ref{Thm:direct-embedding}. 
\end{proof}

\begin{Remark} \label{Ransford}
As a byproduct of our proof of Corollary~\ref{Cor:rationnel} and Theorem~\ref{Thm:direct-embedding}, we see that if $b$ is rational and  non-extreme  and if $\zeta_1,\dots,\zeta_n$ are the zeros of $a$ on $\T$, listed according to multiplicity, then
\[
\HH(b)=\left(\prod_{j=1}^n (z-\zeta_i) \right)H^2\dotplus \mathcal P_{n-1}.
\]
This decomposition already appears in \cite[Lemma 4.3]{ransford}  but their argument is based on a difficult result of  Ball and Kriete which gives a condition as to when one $\HH(b)$-space is contained in another. 

Moreover, if we gather \cite[Theorem 4.1.]{ransford} and Corollary~\ref{Cor:rationnel}, then we recover a result of \cite{fricain-shabankhah} concerning the characterization of direct Carleson measures for a Dirichlet-type space associated with a finite sum of Dirac measures. 
\end{Remark}

We already mentioned in Section \ref{section2} that if $(a,b)$ is a corona pair and if $T_{\ov{a}/a}$ is invertible then $\HH(b)=\mathscr{M}(a)$. We also pointed out that this is equivalent to the fact that there exists $\alpha\in\T$ with $\sigma_\alpha$ (from \eqref{AC-sigma-alpha}) satisfies $\sigma_{\alpha} \ll m$ and $|F_\alpha|^2\in (A_2)$. In particular, 
applying Theorem~\ref{Thm:direct-embedding} to this situation (or by direct 
inspection) immediately gives the following result.

\begin{Corollary} \label{C-CP}
Suppose that $(a,b)$ forms a corona pair and $T_{\ov{a}/a}$ is invertible. If $\mu \in M_{+}(\D^{-})$, then the following assertions are equivalent.
\begin{enumerate}
\item The measure $\mu$ is a Carleson measure for $\HH(b)$;
\item The function $a$ is $\mu$-admissible and the measure $|a|^2\,d\mu$ is a Carleson measure for $H^2$. 
\end{enumerate}
\end{Corollary}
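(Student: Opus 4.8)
The plan is to recognize Corollary \ref{C-CP} as the degenerate instance $N=0$ of Theorem \ref{Thm:direct-embedding}, in which the polynomial $p$ is taken to be constant. First I would record the standing consequence of the two hypotheses: as recalled in Sections \ref{section2} and \ref{section5}, the assumption that $(a,b)$ is a corona pair together with the invertibility of $T_{\ov{a}/a}$ is precisely equivalent to $\mathscr{M}(a)=\HH(b)$ with equivalent norms, and by \cite[IX-4]{Sa} this is in turn equivalent to the existence of some $\alpha\in\T$ for which the Aleksandrov--Clark measure $\sigma_\alpha$ from \eqref{AC-sigma-alpha} satisfies $\sigma_\alpha\ll m$ and for which $|F_\alpha|^2\in(A_2)$, where $F_\alpha=a/(1-\ov{\alpha}b)$. (The hypotheses of Theorem \ref{Thm:direct-embedding} ask only for a Pythagorean pair, which holds since $b$ is non-extreme throughout this section.)

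Next I would fix such an $\alpha$ and feed the data into Theorem \ref{Thm:direct-embedding}. Taking the constant polynomial $p\equiv 1$, which vacuously has all of its roots on $\T$ since it has none, and setting $f:=F_\alpha\in H^2$, we have $F_\alpha=pf$ and $|f|^2=|F_\alpha|^2\in(A_2)$, so every hypothesis of Theorem \ref{Thm:direct-embedding} is met. The conclusion of that theorem is exactly the asserted equivalence of (1) and (2), so the corollary follows immediately.

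As a sanity check I would also sketch the direct route. The forward implication (1) $\Rightarrow$ (2) is already contained in the remark preceding Proposition \ref{P-direct-a} (which yields the $\mu$-admissibility of $a$, since $a\in\HH(b)$) together with Proposition \ref{P-direct-a} itself (which yields that $|a|^2\,d\mu$ is a Carleson measure for $H^2$). For (2) $\Rightarrow$ (1), since $\mathscr{M}(a)=\HH(b)$ with equivalent norms, every $f\in\HH(b)$ has the form $f=ag$ with $g\in H^2$, and writing $d\nu=|a|^2\,d\mu$ one computes $\|f\|_\mu^2=\int|ag|^2\,d\mu=\int|g|^2\,d\nu=\|g\|_\nu^2\lesssim\|g\|_2^2=\|ag\|_{\mathscr{M}(a)}^2\asymp\|f\|_b^2$, the estimate using that $\nu$ is Carleson for $H^2$.

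The only genuinely delicate point, and hence the step I would treat most carefully in the direct route, is the $\mu$-admissibility bookkeeping: one must verify that each $f=ag$ actually has non-tangential boundary values $\mu$-almost everywhere, so that $\|f\|_\mu$ is defined and the identity $\|f\|_\mu=\|g\|_\nu$ is legitimate. This is precisely where the hypothesis that $a$ is $\mu$-admissible in statement (2) enters, combined with the fact that a Carleson measure $\nu$ for $H^2$ forces $g$ to possess boundary values $\nu$-almost everywhere. Invoking Theorem \ref{Thm:direct-embedding} with $p\equiv 1$ sidesteps having to reprove this bookkeeping, which is why I would present the clean application as the primary argument and relegate the direct computation to a remark.
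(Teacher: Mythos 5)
Your proposal is correct and takes essentially the same route as the paper: the paper's own proof consists precisely of observing that the hypotheses (corona pair plus invertibility of $T_{\ov{a}/a}$) are equivalent, via \cite[IX-4]{Sa}, to the existence of $\alpha\in\T$ with $\sigma_\alpha\ll m$ and $|F_\alpha|^2\in (A_2)$, and then applying Theorem~\ref{Thm:direct-embedding} in the degenerate case (your $p\equiv 1$, $N=0$), ``or by direct inspection.'' Your direct-route sanity check is exactly that parenthetical alternative (Proposition~\ref{P-direct-a} for $(1)\Rightarrow(2)$ and the norm computation on $\mathscr{M}(a)=\HH(b)$ for $(2)\Rightarrow(1)$), so there is no substantive difference in approach.
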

%
\begin{Remark}
 A sufficient condition for direct Carleson measures for $\HH(b)$ is given in \cite{BFM}. More precisely, for $\varepsilon\in (0,1)$, we let $\Omega(b,\varepsilon)$ denote the sub-level set 
\[
\Omega(b,\varepsilon):=\{z\in\D:|b(z)|<\varepsilon\},
\]
and 
\[
\sigma(b):=\{\zeta\in\T:\liminf_{z\to\zeta}|b(z)|<1\},
\]
denote the boundary spectrum of $b$
and we set 
$$\widetilde\Omega(b,\varepsilon)=\Omega(b,\varepsilon)\cup\sigma(b).$$
It is known \cite[Theorem 6.1]{BFM} that if $\mu$ satisfies 
\[
\mu(S(I))\lesssim |I|,
\]
for any arc $I$ such that $S(I)\cap\widetilde\Omega(b,\varepsilon)\neq\emptyset$, then 
$$\|f\|_{\mu}\lesssim \|f\|_b, \quad f\in \HH(b).$$ 

Note that our results, in some sense, complete the picture because one can, using Corollary~\ref{Cor:rationnel}, produce an example of a Carleson measure for $\HH(b)$ where the above criterion cannot be applied.
Indeed let $$b(z)=\frac{1+z}{2}.$$
Then $$a(z)=\frac{1-z}{2}$$ and $\sigma(b)=\T\setminus\{1\}$. Thus checking the Carleson condition for $\mu$ on Carleson squares \eqref{window} which intersect  $\widetilde \Omega(b,\varepsilon)$ is, in this case, equivalent to saying that $\mu$ is a Carleson measure for $H^2$. But now it is easy to construct an example of a measure $\mu$ which is Carleson for $\HH(b)$ but not for $H^2$. For instance, we can consider the measure $\mu$ on the interval $(0,1)$ defined by 
$$d\mu(t)=(1-t)^{-\beta}dt,$$for $\beta\in (0,2]$. According to Corollary~\ref{Cor:rationnel}, $\mu$ is a Carleson measure for $\HH(b)$ but cannot be a Carleson measure for $H^2$ because if we consider the arc $I_{\vartheta}:=(e^{-i\vartheta},e^{i\vartheta})$, $\vartheta \in (0, \pi/2)$, 
we have 
\[
\mu(S(I_{\vartheta}))=\int_{1-\vartheta/2\pi}^1\frac{dt}{(1-t)^{\beta}}=\frac{(\vartheta/2\pi)^{1-\beta}}{1-\beta},
\]
and thus 
\[
\sup_{\vartheta>0}\frac{\mu(S(I_{\vartheta}))}{|I_{\vartheta}|} = \infty.
\]

However, it is important to note here that
our results focus on the case where $b$ is  non-extreme  whereas in \cite{BFM}, there are no such assumptions on $b$. 
\end{Remark}

\begin{Remark}
In \cite{BFM}, it is shown that for a particular class of functions $b$, the reproducing kernel thesis is true. More precisely, let $b\in H^{\infty}_{1}$ and assume that there exists $\varepsilon\in (0,1)$ such that $\Omega(b,\varepsilon)$ is connected and its closure contains the spectrum $\sigma(b)$. If $\|k_\lambda^b\|_\mu\lesssim \|k_\lambda^b\|_b$ holds for every $\lambda\in\mathbb D$, then $\|f\|_\mu\lesssim\|f\|_b$ for every $f\in\mathscr H(b)$. However, in \cite{MR1945290}, F. Nazarov and A. Vol'berg showed that this is no longer true in the general case (their counterexample corresponds to the case where $b$ is an inner function). According to the results of this paper, it would be natural to conjecture that the reproducing kernel thesis is true in the non-extreme case. However, we currently do not know how to prove this.
\end{Remark}

\section{Examples} \label{section6}
We would like to  
discuss the necessity of the two hypotheses appearing in Corollary \ref{C-CP}. To do so, we will construct two examples where either of the two
conditions
\begin{equation}\label{condition i}
(a,b) \text{ is a corona pair}
\end{equation}
\begin{equation}\label{condition ii}
T_{\ov{a}/a} \text{ is invertible}
\end{equation}
are violated and yet $\nu$ is a Carleson measure for $H^2$ for which there is no Carleson embedding $\HH(b)\hookrightarrow L^2(\mu)$. Let us start with condition (\ref{condition i}).

\begin{Example}
There is a Pythagorean pair $(a,b)$ and a $\mu\in M_+(\D^-)$ 
such that  $(a,b)$ is not a corona pair, $T_{\ov{a}/a}$ is
invertible, $\nu$ is a Carleson measure for $H^2$, yet $\mu$ is not a Carleson
measure for $\HH(b)$.

To see this, let 
$$a(z)=c(1-z)^{\alpha}, \quad \alpha\in (0,1/2),$$ guaranteeing that $|a|^2 \in (A_2)$ (equivalently $T_{\ov{a}/a}$ is invertible).
Here $c=2^{-\alpha}$ so that $\|a\|_{\infty}=1$. Clearly
$1-|a|^2$ is a bounded, $\log$-integrable function, and so that there is an 
outer function $b_0\in H^{\infty}$ 
such that $|a|^2+|b_0|^2=1$ a.e.\ on $\T$. Symmetrically, $b_0$ is
also non-extreme, and its Pythagorean mate is $a$.
Now consider the Blaschke product $B=B_{\Lambda}$ whose zeros are 
$\Lambda=\{1-1/2^n\}_{n \geq 1}$, and set $b=Bb_0$. Then $(a,b)$ is
not a corona pair since 
$$|a(\lambda_n)|+|b(\lambda_n)|=
|a(\lambda_n)|\to 0, \quad n\to\infty.$$


Now consider the measure $\mu$ on $\T$ defined by
\[
 d\mu(z)=\frac{1}{|1-z|^{\beta}}dm(z)
\] 
for some $0<\beta\le 2\alpha$. Then
\[
 d\nu=|a|^2d\mu=c^2|1-z|^{2\alpha-\beta}dm
\]
has bounded Radon-Nikodym derivative and is thus a
Carleson measure for $H^2$.

To show that $\mu$ is not a Carleson measure for $\mathscr{H}(b)$, we now estimate the $L^2(\mu)$-norms of
the normalized reproducing kernels 
\[
 \kappa_{\lambda_n}(z)=\frac{k_{\lambda_n}(z)}{\|k_{\lambda_n}\|_2}
 =\frac{\sqrt{1-|\lambda_n|^2}}{1-\ov{\lambda}_nz}, \quad z\in\D.
\]
Indeed,
\begin{eqnarray*}
 \|\kappa_{\lambda_n}\|_{L^2(\mu)}^2
 &=&\int_{\T}\frac{1-|\lambda_n|^2}{|\zeta-\lambda_n|^2}
 \frac{dm(\zeta)}{|1-\zeta|^{\beta}}
 \simeq \frac{1}{2^n}\int_{-\pi}^{\pi}
 \frac{1}{|e^{it}-\lambda_n|^2}\frac{1}{t^{\beta}}dt\\
 &\ge&\frac{1}{2^n}\int_{-1/2^n}^{1/2^n}\frac{1}{|e^{it}-\lambda_n|^2}
 \frac{1}{t^{\beta}}dt
  \simeq \frac{1}{2^n} \times 2^{2n}\int_{-1/2^n}^{1/2^n}\frac{1}{t^{\beta}}dt\\
 &\simeq& \frac{1}{2^n}\times 2^{2n} \times \frac{1}{2^{n(1-\beta)}}
 =2^{n(2-1-(1-\beta))}\\
 &=& 2^{n\beta} \to \infty,\quad n\to \infty.
\end{eqnarray*}
By \eqref{eq:norm-Hb-kernel}, since $b(\lambda_n)=0$, $n\geq 1$, we have 
\[
\|\kappa_{\lambda_n}\|_b=1.
\]
As a result, $\mu$ is not a Carleson measure for $\HH(b)$.
\end{Example}

The following result discusses the necessity of condition (\ref{condition ii}).

\begin{Theorem}\label{thm3.4}
Let $(a,b)$ be a corona pair such that $|a|^{-2}\in L^1$ and let 
$d\mu=|a|^{-2}dm$. Then $\mu$ is a Carleson measure for $\HH(b)$ if and only if $|a|^2 \in (A_{2})$.
\end{Theorem}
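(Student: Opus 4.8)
I want to prove that, for a corona pair $(a,b)$ with $|a|^{-2}\in L^1$ and $d\mu=|a|^{-2}\,dm$, the measure $\mu$ is a Carleson measure for $\HH(b)$ if and only if $|a|^2\in(A_2)$. The strategy is to reduce the $\HH(b)$-embedding problem to a known $H^2$-embedding problem via the corona hypothesis, and then identify the resulting $H^2$-Carleson condition explicitly as the Muckenhoupt condition. Since $(a,b)$ is a corona pair and $b$ is non-extreme, I may invoke the fact recalled in Section~\ref{section2} that $\mathscr{M}(\ov{a})=\HH(b)$ with equivalent norms. This means $\|f\|_b\asymp\|f\|_{\mathscr{M}(\ov a)}$, so the Carleson embedding $\HH(b)\hookrightarrow L^2(\mu)$ is equivalent to the embedding $\mathscr{M}(\ov a)\hookrightarrow L^2(\mu)$.

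First I would set up the reproducing kernel test. By Proposition~\ref{P-direct-a} and Corollary~\ref{C-CP} (or rather its underlying argument), the cleanest route is to test on the Cauchy kernels. If $\mu$ is Carleson for $\HH(b)$, then $\nu:=|a|^2\,d\mu=dm$ is Carleson for $H^2$ by Proposition~\ref{P-direct-a}; but here $\nu=dm$ is trivially Carleson, so this direction gives no information and the content is entirely in recovering $(A_2)$. Instead, I would work directly with the equivalence of norms $\|f\|_b\asymp\|T_{\ov a}^{-1}f\|_2$ on $\mathscr{M}(\ov a)=\HH(b)$. For $g\in H^2$ one has $T_{\ov a}g\in\HH(b)$ with $\|T_{\ov a}g\|_b\asymp\|g\|_2$, while on the boundary the $L^2(\mu)$ norm of $T_{\ov a}g$ involves $|T_{\ov a}g|^2|a|^{-2}$. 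The key computation is to relate $T_{\ov a}g$ on $\T$ to $g$ and $|a|$. Since $a$ is outer, $T_{\ov a}g=P_+(\ov a g)$, and the boundary behaviour of the Riesz projection against the weight $|a|^{-2}$ is exactly what the $(A_2)$ condition governs.

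The heart of the matter is therefore the following equivalence, which I would isolate: the embedding $\|T_{\ov a}g\|_\mu\lesssim\|g\|_2$ for all $g\in H^2$, with $d\mu=|a|^{-2}\,dm$, is equivalent to the boundedness of $P_+$ on $L^2(|a|^{-2}\,dm)$. Indeed $\|T_{\ov a}g\|_\mu^2=\int_\T|P_+(\ov a g)|^2|a|^{-2}\,dm$, and substituting $\phi=\ov a g$ (so that $\|g\|_2=\|\phi\|_2$ since $|\ov a g|=|a||g|$ need not equal $|\phi|$ — care is needed here) one compares this with $\int_\T|\phi|^2|a|^{-2}\,dm$. The weighted boundedness of $P_+$ on $L^2(|a|^{-2}\,dm)$ is precisely the $(A_2)$ condition for $|a|^{-2}$, which by the symmetry of the Muckenhoupt class is equivalent to $|a|^2\in(A_2)$, as already noted after \eqref{MC} in Section~\ref{section2}. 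Running the argument backwards, if $|a|^2\in(A_2)$ then $T_{\ov a/a}$ is invertible, $(a,b)$ is already assumed to be a corona pair, so in fact $\mathscr{M}(a)=\HH(b)$ and the embedding follows from the $H^2$-Carleson property of $\nu=|a|^2\,d\mu=dm$ via Corollary~\ref{C-CP}.

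The main obstacle I anticipate is the careful bookkeeping in the substitution linking $T_{\ov a}g=P_+(\ov a g)$ to the weighted projection: one must pass from the analytic-function formulation $\|T_{\ov a}g\|_b\asymp\|g\|_2$ to a genuine statement about $P_+$ acting on the weighted space $L^2(|a|^{-2}\,dm)$, and verify that the relevant test functions range over a set rich enough to detect the full $(A_2)$ constant rather than a proper subclass. Since $a$ is outer and $1/a\notin L^\infty$ in general, the map $g\mapsto \ov a g$ does not surject onto $L^2(|a|^{-2}\,dm)$ in an obvious norm-preserving way, so I would need to use density of $\{\ov a g: g\in H^2\}$-type classes and the one-to-one property of $T_{\ov a}$ (valid because $a$ is outer) to close the argument. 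Once this correspondence is established, identifying the resulting condition with \eqref{MC} and invoking the stated equivalence between $(A_2)$ and the weighted boundedness of $P_+$ finishes the proof.
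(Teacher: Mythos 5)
Your overall route is the same as the paper's: use the corona hypothesis to identify $\HH(b)$ with $\mathscr{M}(\ov{a})$ up to equivalence of norms, reduce the Carleson embedding to the inequality $\|T_{\ov{a}}g\|_\mu\lesssim\|g\|_2$ for $g\in H^2$, and identify this with weighted boundedness of $P_+$ on $L^2(|a|^{-2}dm)$; your converse direction (via invertibility of $T_{\ov{a}/a}$, the identity $\mathscr{M}(a)=\HH(b)$, and Corollary~\ref{C-CP} applied to $|a|^2d\mu=dm$) is correct. But the forward direction -- the only one with real content -- is left with a genuine gap, at exactly the step you flag as the ``main obstacle.'' The substitution $\phi=\ov{a}g$, $g\in H^2$, only shows that $\|P_+\phi\|_{L^2(\mu)}\lesssim\|\phi\|_{L^2(\mu)}$ for $\phi$ ranging over $\ov{a}H^2$, and your proposed way of closing the argument, ``density of $\{\ov{a}g:g\in H^2\}$-type classes,'' cannot work: the map $j(f)=\ov{a}f$ is a \emph{surjective isometry} from $L^2$ onto $L^2(\mu)=L^2(|a|^{-2}dm)$ (surjective because any $\phi\in L^2(\mu)$ equals $\ov{a}(\phi/\ov{a})$ with $\|\phi/\ov{a}\|_2=\|\phi\|_\mu$), so $\ov{a}H^2=j(H^2)$ is a \emph{closed proper} subspace of $L^2(\mu)$, not a dense one. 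Injectivity of $T_{\ov{a}}$ does not help either: boundedness of $P_+$ on a proper closed subspace gives, by no abstract argument, boundedness on the whole space.

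The missing idea -- which is how the paper closes this step -- is structural rather than topological: multiplication by $\ov{a}$ maps $\ov{zH^2}$ into $\ov{zH^2}$, so $P_+(\ov{a}f)=0$ whenever $f\in\ov{zH^2}$; equivalently, $T_{\ov{a}}f=T_{\ov{a}}P_+f$ for every $f\in L^2$ (where $T_{\ov{a}}$ on $L^2$ means $f\mapsto P_+(\ov{a}f)$). Hence the hypothesis $\|T_{\ov{a}}g\|_\mu\lesssim\|g\|_2$ on $H^2$ upgrades automatically to all of $L^2$: for $f\in L^2$, $\|T_{\ov{a}}f\|_\mu=\|T_{\ov{a}}P_+f\|_\mu\lesssim\|P_+f\|_2\le\|f\|_2$. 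Now combine with the surjective isometry $j$: for arbitrary $\phi\in L^2(\mu)$, writing $\phi=\ov{a}f$ with $f=\phi/\ov{a}\in L^2$, one gets $P_+\phi=T_{\ov{a}}f$, whence $\|P_+\phi\|_\mu\lesssim\|f\|_2=\|\phi\|_\mu$. This is boundedness of $P_+$ on all of $L^2(|a|^{-2}dm)$, i.e. $|a|^{-2}\in(A_2)$, equivalently $|a|^2\in(A_2)$. With this one observation inserted in place of the density argument, your proof is complete and coincides with the paper's (which phrases it as the commutative diagram $T_{\ov{a}}=P_+\circ j$).
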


\begin{proof}
The following equivalences are quite obvious:
\begin{eqnarray*}
\mathscr{M}(\ov{a})\hookrightarrow L^{2}(\mu) & \Leftrightarrow & \Vert f\Vert_{\mu}\lesssim\Vert f\Vert_{\mathscr{M}(\ov{a})},\quad  f\in \mathscr{M}(\ov{a}),\\
& \Leftrightarrow & \Vert T_{\ov{a}}g\Vert_{\mu}\lesssim\Vert g\Vert_{2},\quad  g\in H^{2},\\
& \Leftrightarrow & T_{\ov{a}} : H^{2}\to L^{2}(\mu) \text{ is bounded}\\
& \Leftrightarrow & T_{\ov{a}} : L^{2}\to L^{2}(\mu) \text{ is bounded}
\end{eqnarray*}
Let $j:L^2\longrightarrow L^2(\mu)$ be the onto isometry
$j(f)=\overline{a}f$ and observe that the following diagram commutes:
\begin{diagram}
L^2 &  &  \\ 
\dTo_{j} & \rdTo^{T_{\ov{a}}} & \\ 
L^2(\mu) & \rTo^{      \phantom{asdasd}P_+\phantom{asdasd}} & L^2(\mu).
\end{diagram}
Hence
\begin{eqnarray*}
\mathscr{M}(\ov{a})\hookrightarrow L^{2}(\mu) 
& \Leftrightarrow & P_+ : L^{2}(\mu)\to L^{2}(\mu) \text{ is bounded}\\
& \Leftrightarrow & |a|^2 \in (A_{2}).
\end{eqnarray*}
Remembering that since $(a,b)$ is a corona pair, then $\HH(b)=\mathscr{M}(\ov{a})$ with equivalent norms which completes the proof.
\end{proof}

\begin{Example}
Based on Theorem \ref{thm3.4},
we now construct an explicit example of a corona pair $(a,b)$ so that $d\mu=|a|^{-2}dm$ is not a Carleson measure for $\HH(b)$ whereas $|a|^{2}d\mu=dm$ is naturally a Carleson measure for $H^2$. In view of Corollary \ref{C-CP}, condition (\ref{condition ii}) will not be satisfied here.

Let $\lbrace\beta_n\rbrace_{n\geq1}$ 
be a sequence decreasing
to zero and bounded by $1/2$. 
For $n\ge 1$, introduce the intervals
\[
 I_n:=\Big[\frac{1}{2^{2n+1}}+\frac{1}{2^{3n}},\frac{1}{2^{2n}}
 -\frac{1}{2^{3n}}\Big),\quad
 J_n:=\Big[\frac{1}{2^{2n}}+\frac{1}{2^{3n}},\frac{1}{2^{2n-1}}
 -\frac{1}{2^{3n}}\Big)
\]
and 
define a symmetric function $u$  ($u(e^{it})=u(e^{-it})$) on these intervals by
\[
 u(e^{it})=
 \left\{
 \begin{array}{ll}
 1/2 & \text{if }t\in J_n,\\
 \beta_n &\text{if }t\in I_n.
 \end{array}
 \right. 
\]
Connect smoothly and monotonically the values $1/2$ and $\beta_n$ between
these intervals
and  set $u(e^{it})=1/2$ on the remaining part of the circle. We need to impose
two conditions on $\lbrace\beta_n\rbrace_{n\geq1}$. First suppose that
\[
 \sum_{n\ge 1}\frac{1}{2^{2n}} 
 \log\beta_n^{-1}<\infty
\]
guaranteeing that $u$ is log-integrable. 
Hence there is an outer function $a$ given by
\[
 a(z)=\exp\left(\frac{1}{2}\int_{-\pi}^{\pi}\frac{e^{it}+z}{e^{it}-z}
 \log u(e^{it})\frac{dt}{2\pi}\right),\quad z\in\D.
\]
Obviously $|a|^2=u$ a.e.\ on $\T$.
Secondly we impose the condition
\[
  \sum_{n\ge 0}\frac{1}{2^{2n}} 
 \frac{1}{\beta_n}<\infty
\]
which guarantees that $|a|^{-2}=u^{-1}\in L^1(\T)$. One can pick,  for instance,
$\beta_n\ge 2^{-2\alpha n}$ with $\alpha<1$. 

Now, in view of the smoothness and non-vanishing properties of $u$,
the function $a$ extends continuously from $\D$ to every
point $\zeta\in\T\setminus \{1\}$.
Note that $a$ satisfies \eqref{extreme-defn}. Thus we can consider an outer function $b$ such that $|b|^2=1-|a|^2$ almost everywhere on $\T$. Then 
we have 
\[
 \log|b(z)|=\frac{1}{2}\int_{-\pi}^{\pi} \frac{1-|z|^2}{|e^{it}-z|^2}
 \log({1-u(e^{i t})})\frac{dt}{2\pi},\quad z\in\D.
\] 
Now if $(a,b)$ were not a corona pair, then there would be a sequence 
$\lbrace z_n\rbrace_{n\ge 1}$ such that $a(z_n)\to 0$ and $b(z_n)\to 0$. Clearly we can
suppose $z_n\to \zeta\in\T$. The only point where this can happen
for $a$ is $\zeta=1$. Now, fix an arc of $\T$ which contains $1$.
There, the function $b$
is bounded below by $1/2$ so that the outer function $b$ cannot be small
in this neighborhood (the harmonic function $\log|b|$ cannot tend to
$-\infty$ there). Hence $b(z_n)$ can not approach $0$. We conclude that $(a,b)$ is a corona pair.

Let us check that $|a|^2$ is not $(A_2)$.
Consider the intervals
\[
 K_n:=\Big[\frac{1}{2^{2n+1}}+\frac{1}{2^{3n}},\frac{1}{2^{2n-1}}
 -\frac{1}{2^{3n}}\Big)\supset I_n\cup J_n,
\] 
the length of which is comparable to $2^{-2n}$.
We have
\[
 \frac{1}{m(K_n)}\int_{K_n}|a(e^{it})|^2 dt
 \simeq 2^{2n}\int_{J_n} u(e^{it})dt\simeq 1
\]
(the contribution of $u$ on $I_n$ is negligible and $m(J_n)\simeq 2^{-2n}$).
On the other hand,
\[
 \frac{1}{m(K_n)}\int_{K_n}\frac{1}{|a(e^{it})|^2} dt
 \simeq 2^{2n}\int_{I_n} \frac{1}{u(e^{it})}dt\simeq \frac{1}{\beta_n}
\]
(the contribution of $1/u$ on $J_n$ is negligible and $m(I_n)\simeq
2^{-2n}$).
Hence, as $n \to \infty$,
\[
\left(\frac{1}{m(K_n)}\int_{K_n}|a(e^{it})|^2 dt\right)
 \left( \frac{1}{m(K_n)}\int_{K_n}\frac{1}{|a(e^{it})|^2} dt\right)
 \simeq \frac{1}{\beta_n}
 \rightarrow \infty,
\]
which proves the claim. Then according to Theorem~\ref{thm3.4} $d\mu=|a|^{-2}\,dm$ is not a Carleson measure for $\HH(b)$. 
%
\end{Example}

To finish this section we comment further on the general situation $d\mu=hdm$
when $h$ is not necessarily equal to $|a|^{-2}$.
Then the direct embedding result is connected with the so-called two-weighted
estimates. Indeed, with a similar argument as in Theorem \ref{thm3.4}, we
can show that when $(a,b)$ is a corona pair such that $|a|^{-2}\in L^1$ and $d\nu=|a|^2d\mu$ for some absolutely continuous
measure $d\mu=h dm$ on $\T$, then
\begin{eqnarray*}
\mathscr{M}(a) \hookrightarrow L^{2}(\mu) & \Leftrightarrow & \|f\|_{\mu}\lesssim \|f\|_{\mathscr{M}(\ov{a})},\quad  f\in \mathscr{M}(\ov{a}),\\
& \Leftrightarrow & \Vert T_{\ov{a}}g\Vert_{\mu}\lesssim\Vert g\Vert_{2},\quad  g\in H^{2},\\
& \Leftrightarrow & T_{\ov{a}} : H^{2}\to L^{2}(\mu) \text{ is bounded}\\
& \Leftrightarrow & T_{\ov{a}} : L^{2}\to L^{2}(\mu) \text{ is bounded}.
\end{eqnarray*}
As was done earlier,  let us consider the corresponding commutative diagram.
To do this set $d\gamma= |a|^{-2}dm$ and  $j:L^2\to L^2(\gamma)$ be
the onto-isometry $j(f):=\ov{a}f$. Then the following diagram commutes:
\begin{diagram}
L^2 &  &  \\ 
\dTo_{j} & \rdTo^{T_{\ov{a}}} & \\ 
L^2(\gamma) & \rTo^{      \phantom{asdas}P_+\phantom{asdas}} & L^2(\mu)
\end{diagram}
Hence, recalling that $d\mu=h\,dm$,
 $\mathscr{M}(\ov{a})$ embeds into $L^{2}(\mu)$ if and only if 
$$ P_+ : L^{2}\Big(|a|^{-2}dm\Big)\to L^{2}(h\,dm) \text{ is bounded}.$$
Using one more time that $\mathscr{M}(\ov{a})=\HH(b)$ (with equivalent norms), we get:
\begin{Theorem}\label{UselessThm}
Let us suppose that $(a,b)$ forms a corona pair such that $|a|^{-2}\in L^1$. Let $d\mu=h\,dm$ with $h\in L^1$. 
Then $\mu$ is a Carleson measure for $\HH(b)$ if and only if 
$P_+ : L^{2}\Big(|a|^{-2}dm\Big)\to L^{2}(h\,dm)$ is bounded
\end{Theorem}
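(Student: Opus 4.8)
The plan is to reduce the Carleson embedding for $\HH(b)$ to a weighted boundedness statement for the Riesz projection, following the computation that precedes the statement and mirroring the proof of Theorem~\ref{thm3.4}. Since $(a,b)$ is a corona pair, I would first invoke the identification $\HH(b)=\mathscr{M}(\ov a)$ with equivalent norms (recalled in Section~\ref{section2}). This turns the assertion ``$\mu$ is a Carleson measure for $\HH(b)$'' into the embedding $\mathscr{M}(\ov a)\hookrightarrow L^2(\mu)$, i.e.\ $\|f\|_\mu\lesssim\|f\|_{\mathscr{M}(\ov a)}$ for all $f\in\mathscr{M}(\ov a)$.

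Next I would transport the problem to $H^2$ through the operator $T_{\ov a}$. Because $a$ is outer, $T_{\ov a}$ is one-to-one and is an isometry of $H^2$ onto $\mathscr{M}(\ov a)$ (Section~\ref{section2}), so writing $f=T_{\ov a}g$ the embedding becomes $\|T_{\ov a}g\|_\mu\lesssim\|g\|_2$ for all $g\in H^2$, that is, boundedness of $T_{\ov a}\colon H^2\to L^2(\mu)$. I would then pass from $H^2$ to all of $L^2$: viewing $T_{\ov a}=P_+M_{\ov a}$ on $L^2$, where $M_{\ov a}f=\ov a f$ is the (bounded) multiplication operator, the key observation is that $T_{\ov a}$ annihilates the anti-analytic part $\ov{H^2_0}$. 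Indeed, for $h\in H^2$ with $h(0)=0$ one has $\ov a\,\ov h=\ov{ah}$ with $ah\in H^2$ and $(ah)(0)=0$, so $P_+(\ov{ah})=0$. Consequently $T_{\ov a}$ on $L^2$ coincides with its restriction to $H^2$ extended by zero on $\ov{H^2_0}$, whence boundedness on $H^2$ and boundedness on $L^2$ are equivalent.

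Finally I would use the commutative diagram preceding the statement. Set $\gamma=|a|^{-2}\,dm$ and $j\colon L^2\to L^2(\gamma)$, $j(f)=\ov a f$. A direct computation gives $\|j(f)\|_{L^2(\gamma)}^2=\int|\ov a f|^2|a|^{-2}\,dm=\int|f|^2\,dm$, so $j$ is an isometry, and it is onto since $g\in L^2(\gamma)$ yields $g/\ov a\in L^2$ with $j(g/\ov a)=g$; thus $j$ is an isometric isomorphism (the standing hypothesis $|a|^{-2}\in L^1$, inherited from Theorem~\ref{thm3.4}, guarantees that $\gamma$ is finite, and note $|a|\le 1$ forces $L^2(\gamma)\subset L^2$, so $P_+$ is genuinely defined on $L^2(\gamma)$). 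Since $T_{\ov a}f=P_+(\ov a f)=P_+(j(f))$, we have $T_{\ov a}=P_+\circ j$ as maps from $L^2$ into $L^2(\mu)=L^2(h\,dm)$; because $j$ is an isometric isomorphism, $T_{\ov a}\colon L^2\to L^2(\mu)$ is bounded if and only if $P_+\colon L^2(\gamma)\to L^2(\mu)$ is bounded. Reading off $\gamma=|a|^{-2}\,dm$ and $\mu=h\,dm$ then yields the stated equivalence.

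As for difficulty, every step is a soft functional-analytic manipulation, so there is no single deep obstacle. The point requiring the most care is the passage from $H^2$ to $L^2$: one must verify the annihilation of $\ov{H^2_0}$ so that the two notions of boundedness of $T_{\ov a}$ really coincide, and one must check that $L^2(\gamma)$ is the correct space on which the Riesz projection is tested, which is where the hypotheses $|a|^{-2}\in L^1$ and $\|a\|_\infty\le 1$ enter.
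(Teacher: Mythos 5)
Your proposal is correct and takes essentially the same route as the paper: reduce via the corona-pair identification $\HH(b)=\mathscr{M}(\ov{a})$, transport through the isometry $T_{\ov{a}}$ of $H^2$ onto $\mathscr{M}(\ov{a})$, pass from $H^2$- to $L^2$-boundedness, and use the commutative diagram $T_{\ov{a}}=P_+\circ j$ with the onto isometry $j(f)=\ov{a}f$ of $L^2$ onto $L^2(|a|^{-2}\,dm)$. The only difference is that you spell out two points the paper treats as obvious (the annihilation of the anti-analytic part $\ov{H^2_0}$, and the verification that $j$ is an isometric isomorphism with $L^2(|a|^{-2}\,dm)\subset L^2$), which elaborates rather than changes the argument.
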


It is known that the generalized Muckenhoupt condition is necessary
for the continuity of $P_+$ (or the Hilbert transform) between two
weighted $L^2$-spaces:
\begin{equation}
\label{GenMuckenhoupt}
\sup_{I} \left(\frac{1}{m(I)}\int_{I}h\,dm\right)\left(\frac{1}{m(I)}\int_{I}|a|^2\,dm\right)<\infty,
\end{equation}
but this condition is, in general, not sufficient (see for instance \cite[p.154]{Duo}).

\section{Norm equivalence and isometric embeddings}\label{section7}

This next to last section is devoted to a discussion of equivalent norms on
$\HH(b)$ and isometric embeddings of $\HH(b)$ in $L^2(\mu)$. It will turn out that the $L^2(\mu)$ norm is equivalent to the $\HH(b)$-norm only when
 $\HH(b)= \mathscr{M}(a)$.
 
 Let us make the following observation. If we are to impose the condition that the $L^2(\mu)$ norm is equivalent to the $\mathscr{H}(b)$ norm then we require that $\mathscr{H}(b)_{\mu} = \mathscr{H}(b)$ and that $\|f\|_{\mu} \asymp \|f\|_{b}$ for all $f \in \mathscr{H}(b)$. When $b$ is  non-extreme  then Proposition \ref{bainh2} and Theorem \ref{Sarasonequivalence} imply that $H^{\infty} \subset \mathscr{H}(b)$. This means that if every function in $\mathscr{H}(b)$ has a non-tangential limit $\mu$-almost everywhere on $\T$ then every function in $H^{\infty}$ must also have this property. By a classical result of  Lusin \cite[p.~24]{CL} (given any closed subset of $\T$ with Lebesgue measure zero there is an $f \in H^{\infty}$ which does not have non-tangential limits on this set), we see that the singular part of $\mu|\T$ (with respect to $m$) is zero. 

\begin{Proposition}\label{Prop:equivalent-norms}
Let $b\in H^\infty_1$ be  non-extreme point, $\mu\in M_+(\D^{-})$, and $h=d\mu|_{\T}/dm$. 
Assume that $\HH(b)_\mu=\HH(b)$ and 
$$\|f\|_\mu\asymp \|f\|_b, \quad f\in\HH(b).$$
 Then $(a,b)$ is a corona pair and $\vert a\vert^2 \in (A_2)$. In other words, it is necessary that $\HH(b)=\mathscr{M}(a)$.
\end{Proposition}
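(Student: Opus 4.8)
The plan is to combine the reverse Carleson information supplied by Theorem~\ref{MainThm} with the norm-equivalence hypothesis to force every element of $\HH(b)$ into $\mathscr{M}(a)$, and then to read off the two conclusions from the characterization of $\HH(b)=\mathscr{M}(a)$ recalled in Section~\ref{section2}.

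First I would record what the hypotheses already give. Since $\|f\|_\mu\asymp\|f\|_b$ for $f\in\HH(b)$, in particular $\|f\|_b\lesssim\|f\|_\mu$ on the dense set $\HH(b)_\mu=\HH(b)$, so $\mu$ is a reverse Carleson measure for $\HH(b)$. As noted in the discussion preceding the statement, Proposition~\ref{bainh2} together with Theorem~\ref{Sarasonequivalence} yields $H^\infty\subset\HH(b)$, and the Lusin argument then shows that the singular part of $\mu|\T$ vanishes; consequently $\mu|\T\ll m$ and, since $b\in H^\infty$ has non-tangential limits $m$-almost everywhere, $b$ is $\mu$-admissible. Thus $b$ is non-extreme and $\mu$-admissible, so Theorem~\ref{MainThm} applies and its condition~(4) holds: writing $\delta:=\mathrm{ess}\inf_\T(1-|b|^2)h>0$ and using $1-|b|^2=|a|^2$ a.e.\ on $\T$, we obtain the pointwise lower bound $h\geq\delta|a|^{-2}$ a.e.\ on $\T$.

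The key step is then to show $\HH(b)\subseteq\mathscr{M}(a)$; this is essentially the mechanism already used in the implication $(4)\Rightarrow(1)$ of Theorem~\ref{MainThm}, now applied to the whole space rather than to one test function. Given $f\in\HH(b)$, the norm equivalence forces $\|f\|_\mu\asymp\|f\|_b<\infty$, whence
\[
\delta\int_\T |f|^2|a|^{-2}\,dm\leq\int_\T|f|^2 h\,dm\leq\|f\|_\mu^2<\infty .
\]
Therefore $f/a\in L^2(\T)$, and since $a$ is outer and $f\in H^2$ this gives $f/a\in H^2$, i.e.\ $f=a(f/a)\in aH^2=\mathscr{M}(a)$. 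Combined with the always-valid contractive inclusion $\mathscr{M}(a)\subseteq\HH(b)$, this proves $\HH(b)=\mathscr{M}(a)$ as sets. As both are Hilbert spaces and the inclusion $\mathscr{M}(a)\hookrightarrow\HH(b)$ is contractive, the open mapping theorem upgrades this to an equality with equivalent norms.

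Finally I would invoke the characterization recalled in Section~\ref{section2}: $\mathscr{M}(a)=\HH(b)$ with equivalent norms holds if and only if $(a,b)$ is a corona pair and $T_{a/\overline a}$ is invertible, the latter being equivalent to $|a|^2\in(A_2)$. This yields both asserted conclusions. I expect the only genuine obstacle to be the bookkeeping around the $\mu$-admissibility of $b$ (ensuring that Theorem~\ref{MainThm} really applies), which the preceding Lusin-type discussion handles; the analytic heart — passing from the lower bound $h\gtrsim|a|^{-2}$ and the finiteness of $\|f\|_\mu$ to $f/a\in H^2$ — is routine once the outer character of $a$ is used.
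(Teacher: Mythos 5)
Your proof is correct, but it follows a genuinely different route from the paper's. After the common preliminaries (the Lusin discussion giving $\mu|\T\ll m$, hence the $\mu$-admissibility of $b$, and condition (4) of Theorem~\ref{MainThm} giving $h\geq\delta|a|^{-2}$ a.e.), the paper proves the two conclusions separately and directly: the corona bound comes from applying the direct half of the norm equivalence to the Cauchy kernels, i.e.\ $\|k_\lambda\|_\mu\lesssim\|k_\lambda\|_b$ together with \eqref{eq:norm-Hb-kernel}, inserting the lower bound $h\geq\delta|a|^{-2}$, and using subharmonicity of $|a|^{-2}$ to obtain $c\delta\leq(|a(\lambda)|+|b(\lambda)|)^2$; then, with the corona pair in hand (so that $\HH(b)=\mathscr{M}(\overline{a})$ with equivalent norms and $|a|^{-2}\leq\delta^{-1}h\in L^1$), the $(A_2)$ condition is extracted from Theorem~\ref{UselessThm} (boundedness of $P_+\colon L^{2}(|a|^{-2}dm)\to L^{2}(h\,dm)$) combined with the continuous embedding $L^2(h\,dm)\hookrightarrow L^2(|a|^{-2}dm)$ that the same pointwise bound provides. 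You instead prove the set inclusion $\HH(b)\subseteq\mathscr{M}(a)$ by recycling the $(4)\Rightarrow(1)$ mechanism of Theorem~\ref{MainThm} on every $f\in\HH(b)$ (the direct half of the equivalence being what guarantees $\|f\|_\mu<\infty$), upgrade the resulting set equality to equivalence of norms via the open mapping theorem, and then invoke the characterization recalled in Section~\ref{section2}: $\mathscr{M}(a)=\HH(b)$ with equivalent norms if and only if $(a,b)$ is a corona pair and $T_{a/\overline{a}}$ is invertible, the latter being equivalent to $|a|^2\in(A_2)$. Your route is shorter and establishes the reformulation $\HH(b)=\mathscr{M}(a)$ first, with the two named conclusions falling out at once; the cost is that it leans on the full two-sided Sarason characterization of $\mathscr{M}(a)=\HH(b)$, a substantially deeper input, whereas the paper uses only the easier corona-pair fact about $\mathscr{M}(\overline{a})$ plus machinery (Theorem~\ref{UselessThm}) developed within the paper, and its argument yields explicit quantitative bounds. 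A minor point in your favor: you justify the $\mu$-admissibility of $b$ (via $\mu|\T\ll m$) before invoking Theorem~\ref{MainThm}, a step the paper's proof passes over in silence.
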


\begin{proof}
First note that by our above discussion we have  $d \mu|\T = h dm$. Second, according to Theorem~\ref{MainThm}, we have 
\begin{equation}\label{eq:reverse-condition-a}
\delta:=\mathrm{ess} \inf_{\T}|a|^2 h>0.
\end{equation}
Third,  using \eqref{eq:norm-Hb-kernel} there exists a constant $c>0$ such that 
\[
c \int_{\D^{-}}\frac{1}{|1-\bar\lambda z|^2}\,d\mu(z)\leq \frac{|a(\lambda)|^2+|b(\lambda)|^2}{|a(\lambda)|^2(1-|\lambda|^2)}
\]
for every $\lambda\in\D$. Hence with \eqref{eq:reverse-condition-a} we get 
\[
c\delta\int_\T |a(\zeta)|^{-2} \frac{1-|\lambda|^2}{|1-\bar\lambda \zeta|^2}\,dm(\zeta)\leq \frac{|a(\lambda)|^2+|b(\lambda)|^2}{|a(\lambda)|^2}.
\]
Now subharmonicity of $|a|^{-2}$ gives 
\[
\frac{c\delta}{|a(\lambda)|^2}\leq \frac{|a(\lambda)|^2+|b(\lambda)|^2}{|a(\lambda)|^2}\leq\frac{(|a(\lambda)|+|b(\lambda)|)^2}{|a(\lambda)|^2} ,
\]
which proves that $(a,b)$ is a corona pair. 
Let us now  prove that $\vert a\vert^2 \in (A_2)$. From Theorem \ref{UselessThm}
we know that 
$$P_+:L^{2}(|a|^{-2}dm)\to L^{2}(h dm)$$ is 
bounded (note that $0\le |a|^{-2}\leq \delta h$ and thus $|a|^{-2}\in L^1$). 
On the other hand, since  $h\gtrsim |a|^{-2}$, the space $L^2(hdm)$ embeds continuously into $L^2(|a|^{-2}dm)$.  As a consequence, $P_+$ is bounded from $L^2(|a|^{-2}dm)$ to itself which implies 
that $|a|^{-2}$ (and equivalently $|a|^{2}$) satisfies the $(A_2)$ condition. 
\end{proof}

As a consequence of our discussion, we can deduce the following result:

\begin{Theorem}
Let $b\in H^\infty_1$ be  non-extreme, and $\mu\in M_+(\D^{-})$. Then the following are equivalent:
\begin{enumerate}
\item We have $\HH(b)_\mu=\HH(b)$ and $\|f\|_\mu\asymp \|f\|_b$ for any $f\in\HH(b)$;
\item The following conditions hold: 
\begin{enumerate}
\item the function $a$ is $\mu$-admissible, 
\item the pair $(a,b)$ is a corona pair,
\item the function $|a|^2$ satisfies $(A_2)$,
\item the measure $\nu$, defined by $d\nu=|a|^2\,d\mu$, satisfies
$$0 < \inf_{I}\frac{\nu\left(S(I)\right)}{m(I)} \leq \sup_{I}\frac{\nu\left(S(I)\right)}{m(I)} < \infty,$$
where the infimum and supremum above are taken over all open arcs $I$ of $\T$.
\end{enumerate}
\end{enumerate}
\end{Theorem}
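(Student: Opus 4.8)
The plan is to recognize that condition (1) is exactly the statement that $\mu$ is \emph{simultaneously} a Carleson measure and a reverse Carleson measure for $\HH(b)$. Indeed, the requirement $\HH(b)_\mu=\HH(b)$ together with $\|f\|_\mu\asymp\|f\|_b$ splits into the direct inequality $\|f\|_\mu\lesssim\|f\|_b$ with $\HH(b)_\mu=\HH(b)$ (the Carleson condition) and the reverse inequality $\|f\|_b\lesssim\|f\|_\mu$ on the dense set $\HH(b)_\mu=\HH(b)$ (the reverse Carleson condition). Once this observation is made, both implications reduce to assembling results proved earlier, the only genuine work being the passage between the two-sided geometric condition (2d) on $\nu=|a|^2\,d\mu$ and the boundary conditions produced by Theorem~\ref{MainThm} and Proposition~\ref{P-direct-a}.

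For the implication $(1)\Rightarrow(2)$, I would first invoke Proposition~\ref{Prop:equivalent-norms}, which immediately gives (2b) and (2c). Since $\mathscr{M}(a)=aH^2$ is contractively contained in $\HH(b)$ and $a=a\cdot 1\in\mathscr{M}(a)$, we have $a\in\HH(b)=\HH(b)_\mu$, so $a$ is $\mu$-admissible, which is (2a). As $\mu$ is a Carleson measure for $\HH(b)$, Proposition~\ref{P-direct-a} shows $\nu=|a|^2\,d\mu$ is a Carleson measure for $H^2$, i.e.\ $\sup_I \nu(S(I))/m(I)<\infty$, the supremum half of (2d). As $\mu$ is a reverse Carleson measure, Theorem~\ref{MainThm} yields $\mathrm{ess}\inf_\T|a|^2 h=\mathrm{ess}\inf_\T(1-|b|^2)h>0$, which I would then convert into the infimum half of (2d).

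For the implication $(2)\Rightarrow(1)$, I would combine (2b) and (2c) with the characterization recalled in Section~\ref{section2} (namely $\mathscr{M}(a)=\HH(b)$ with equivalent norms if and only if $(a,b)$ is a corona pair and $T_{a/\bar a}$ is invertible, the latter being equivalent to $|a|^2\in(A_2)$) to obtain $\mathscr{M}(a)=\HH(b)$ with equivalent norms. Then (2a) together with the supremum half of (2d) lets me apply Corollary~\ref{C-CP} to conclude that $\mu$ is a Carleson measure for $\HH(b)$; in particular $\HH(b)_\mu=\HH(b)$ and $\|f\|_\mu\lesssim\|f\|_b$. Separately, the infimum half of (2d) is converted (as above) into condition (4) of Theorem~\ref{MainThm}, so $\mu$ is a reverse Carleson measure and $\|f\|_b\lesssim\|f\|_\mu$ on $\HH(b)_\mu=\HH(b)$. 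Combining the two inequalities gives (1).

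The main obstacle is the bookkeeping needed to pass between the geometric two-sided condition on $\nu=|a|^2\,d\mu$ and the boundary conditions of the earlier theorems. The subtlety is twofold: Theorem~\ref{MainThm} is phrased with the measure $(1-|b|^2)\,d\mu$, whose density $1-|b(z)|^2$ on $\D$ differs from the honest modulus $|a(z)|^2$ defining $\nu$, and the infimum $\inf_I \nu(S(I))/m(I)$ a priori sees the full measure on $\D^-$, not just its absolutely continuous boundary part. To handle this I would rerun the Lebesgue differentiation argument from the proof of the equivalence $(3)\Leftrightarrow(4)$ of Theorem~\ref{MainThm}: writing $\mu=\eta+h\,dm+\sigma$ with $\eta=\mu|\D$ and $\sigma$ the singular part of $\mu|\T$, one checks that at $m$-almost every $\zeta\in\T$ the contributions of the disk part $|a|^2\eta$ and of the singular boundary part $|a|^2\sigma$ to $\nu(S(I))/m(I)$ vanish as $I$ shrinks to $\zeta$ (their symmetric $m$-derivatives being zero a.e.), while the absolutely continuous part contributes exactly $|a(\zeta)|^2 h(\zeta)$. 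This shows $\inf_I \nu(S(I))/m(I)>0$ is equivalent to $\mathrm{ess}\inf_\T|a|^2 h>0$, which closes both conversions and completes the proof.
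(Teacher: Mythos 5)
Your proposal is correct and follows essentially the route the paper intends: the paper offers no separate proof (the theorem is introduced only ``as a consequence of our discussion''), and that discussion is precisely your assembly of Proposition~\ref{Prop:equivalent-norms}, Proposition~\ref{P-direct-a}, Corollary~\ref{C-CP} and Theorem~\ref{MainThm}, with the inf/sup conditions on $|a|^2\,d\mu$ translated into boundary conditions by the same differentiation argument (disk part and singular part have vanishing Carleson-box derivative $m$-a.e.) that underlies the equivalence $(3)\Leftrightarrow(4)$ in Theorem~\ref{MainThm}. The only step worth making explicit is that $b$ itself is $\mu$-admissible before invoking Theorem~\ref{MainThm}: in $(1)\Rightarrow(2)$ this holds because $H^\infty\subset\HH(b)=\HH(b)_\mu$ by Proposition~\ref{bainh2} and Theorem~\ref{Sarasonequivalence}, and in $(2)\Rightarrow(1)$ because the Carleson half, which you derive first via Corollary~\ref{C-CP}, already gives $\HH(b)_\mu=\HH(b)$ while $b\in H^\infty\subset\HH(b)$ follows from $|a|^{-2}\in L^1$; this is exactly the point of the paper's preamble discussion in Section~\ref{section7}.
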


\begin{Example}
Surely an example is important here. Let $a(z) :=  c_{\alpha} (1 - z)^{\alpha}$, where $\alpha \in (0, 1/2)$ and $c_{\alpha}$ is chosen so that $a \in H^{\infty}_{1}$. 
As we have already mentioned earlier, since $0 < \alpha < 1/2$, the function $|a|^2$ satisfies the $(A_2)$ condition. Choose $b$ to be the outer function in $H^{\infty}_{1}$ satisfying $|a|^2 + |b|^2 = 1$ on $\T$. Standard theory, using the fact that $a$ is H\"{o}lder continuous on $\D^{-}$, will show that $b$ is continuous on $\D^{-}$ (see \cite{MR0289784}). It follows that $(a, b)$ is a corona pair. If $\sigma \in M_{+}(\D^{-})$ is any Carleson measure for $H^2$, then one can show that $d \mu := |a|^{-2} dm + d \sigma$ satisfies the conditions of the above theorem.
\end{Example}

We end this section with a proof  that if $b$ is non-extreme and non-constant then there are no isometric measures for $\HH(b)$. This requires a preliminary technical result already known  \cite{MR847333}. We provide a different proof which is slightly shorter but needs the additional assumption that $b/a\in H^2$. Recall from Section \ref{section2} that the polynomials belong to $\mathscr{H}(b)$ when $b$ is  non-extreme.

\begin{Lemma}\label{normz^n}
Let  $(c_k)_{k\ge 0}$ be the Taylor coefficients of the analytic function $b/a$ and assume that $b/a\in H^2$. Then
\[
\Vert z^n\Vert_{b}^2=1+\sum_{j=0}^{n}|c_j|^2.
\]
\end{Lemma}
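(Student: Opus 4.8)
The plan is to compute $\|z^n\|_b^2$ using the norm formula \eqref{normdBR}, namely $\|f\|_b^2 = \|f\|_2^2 + \|g\|_2^2$ where $g$ is determined by $T_{\bar b} f = T_{\bar a} g$. First I would note that $\|z^n\|_2^2 = 1$, so the whole computation reduces to identifying $g$ associated to $f = z^n$ and computing $\|g\|_2^2$. Since we are assuming $b/a \in H^2$, I expect to be able to use the explicit formula \eqref{Tba}, namely $g = T_{\overline{b/a}} z^n$, provided I can justify that $(\overline{b/a}) z^n \in L^2$; but this is immediate because $|(\overline{b/a})z^n| = |b/a|$ on $\T$ and $b/a \in H^2 \subset L^2$.

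The key computational step is then to evaluate $g = T_{\overline{b/a}} z^n = P_+\!\left(\overline{(b/a)}\, z^n\right)$ in terms of the Taylor coefficients $(c_k)_{k\ge 0}$ of $b/a$. Writing $b/a = \sum_{k\ge 0} c_k z^k$, its boundary conjugate is $\overline{(b/a)} = \sum_{k\ge 0} \overline{c_k}\, \bar\zeta^{\,k} = \sum_{k\ge 0}\overline{c_k}\,\zeta^{-k}$ on $\T$. Multiplying by $\zeta^n$ gives $\sum_{k\ge 0}\overline{c_k}\,\zeta^{\,n-k}$, and applying $P_+$ retains only the nonnegative-frequency terms, i.e.\ those with $k \le n$. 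Hence
\[
g = \sum_{k=0}^{n} \overline{c_k}\, z^{\,n-k},
\]
and since $\{z^{n-k}\}_{k=0}^n$ is orthonormal in $H^2$, Parseval gives $\|g\|_2^2 = \sum_{k=0}^{n} |c_k|^2$. Combining with $\|z^n\|_2^2 = 1$ yields the claimed identity $\|z^n\|_b^2 = 1 + \sum_{j=0}^n |c_j|^2$.

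I expect the main obstacle to be the careful justification of the two analytic technicalities underlying \eqref{Tba}: first that $z^n \in \HH(b)$ at all (guaranteed since $b$ is non-extreme, so the polynomials lie in $\HH(b)$, as recalled just before the lemma), and second that the formula $g = T_{\overline{b/a}} z^n$ genuinely produces the element $g$ satisfying $T_{\bar b} z^n = T_{\bar a} g$. To verify the latter cleanly, rather than invoking \eqref{Tba} as a black box, I would check the defining relation directly: one computes $T_{\bar a} g = P_+(\bar a \, g)$ and $T_{\bar b} z^n = P_+(\bar b\, z^n)$, and since $\bar a \,g$ and $\bar b\, z^n$ differ by a function whose relevant Fourier coefficients agree after projection (because $\bar b = \overline{(b/a)}\,\bar a$ on $\T$ and projection commutes appropriately with multiplication by the outer factor), the identity $T_{\bar a} g = T_{\bar b} z^n$ holds. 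This verification is the only place where the hypothesis $b/a \in H^2$ is essential, and it is precisely what lets us bypass the more delicate argument of \cite{MR847333}.
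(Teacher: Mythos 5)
Your proposal is correct and follows essentially the same route as the paper: both rest on the norm formula \eqref{normdBR} together with \eqref{Tba}, reducing everything to computing $\|T_{\overline{b/a}}z^n\|_2^2=\sum_{j=0}^n|c_j|^2$. The only difference is cosmetic --- you evaluate $P_+\bigl(\overline{(b/a)}\,\zeta^n\bigr)$ by expanding the Fourier series directly (and you spell out the verification of \eqref{Tba}, which the paper simply cites), whereas the paper packages the same computation via the identity that $z^{n+1}P_-\bar z^{n+1}$ is the orthogonal projection onto polynomials of degree at most $n$.
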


\begin{proof}
By (\ref{normdBR}) and \eqref{Tba} we need to calculate $\Vert h_n\Vert_{2}$, where 
$$h_n=T_{\bar b/\bar a}z^n.$$ We have
\[
\|h_n\|_2^2=\left\|P_+ \frac{\bar b}{\bar a}z^n\right\|_2^2=\left\|P_- {\bar z}^{n+1}\frac{b}{a}\right\|_2^2=\left\|z^{n+1}P_- {\bar z}^{n+1}\frac{b}{a}\right\|_2^2=\sum_{j=0}^n |c_j|^2,
\]
where for the last identity, we used the fact that $z^{n+1}P_-{\bar z}^{n+1}$ is the orthogonal projection onto $(z^{n+1}H^2)^\perp$, that is the orthogonal projection onto the set of polynomials of degree at most $n$. 
\end{proof}

\begin{Theorem}
\label{no-isometry}
When $b$ is non-constant and non-extreme, there are no positive isometric measures for $\HH(b)$.
\end{Theorem}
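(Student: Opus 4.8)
The plan is to argue by contradiction, testing the putative isometry on the monomials $z^n$, whose $\HH(b)$-norms are controlled by Lemma~\ref{normz^n}. Suppose $\mu\in M_+(\D^-)$ is a positive isometric measure for $\HH(b)$, so that $\|f\|_\mu=\|f\|_b$ for every $f\in\HH(b)$. In particular $\mu$ is a reverse Carleson measure for $\HH(b)$, so Corollary~\ref{no-RCE-non-ext} together with Theorem~\ref{Sarasonequivalence} gives $b/a\in H^2$; write $(c_k)_{k\ge0}$ for its Taylor coefficients. Since $b$ is non-extreme the monomials lie in $\HH(b)$, and being continuous on $\D^-$ they are automatically $\mu$-admissible, so the equality $\|z^n\|_b=\|z^n\|_\mu$ is meaningful for every $n$.

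Next I would compute both sides. By Lemma~\ref{normz^n}, $\|z^n\|_b^2=1+\sum_{j=0}^n|c_j|^2$. On the other side, splitting $\mu=\mu|_{\D}+\mu|_{\T}$ and using that $|z^n|=1$ on $\T$, one gets
$$\|z^n\|_\mu^2=\int_\D |z|^{2n}\,d\mu|_{\D}(z)+\mu(\T),$$
where the boundary contribution is exactly $\mu(\T)$ irrespective of how $\mu|_{\T}$ splits into absolutely continuous and singular parts. Equating the two expressions yields, for every $n\ge0$,
$$1+\sum_{j=0}^n|c_j|^2=\int_\D |z|^{2n}\,d\mu|_{\D}(z)+\mu(\T).$$

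The key observation is a monotonicity clash. The left-hand side is non-decreasing in $n$, while on the right-hand side $\mu(\T)$ is constant and $\int_\D|z|^{2n}\,d\mu|_{\D}$ is non-increasing in $n$ (because $|z|<1$ $\mu|_{\D}$-almost everywhere). A non-decreasing sequence can equal a non-increasing one only if both are constant. Constancy of the left-hand side forces $c_j=0$ for all $j\ge1$, that is $b/a\equiv c_0$ is constant; constancy of $\int_\D|z|^{2n}\,d\mu|_{\D}$ forces $\int_\D(1-|z|^{2n})\,d\mu|_{\D}=0$, whence $\mu|_{\D}=0$.

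It then remains to extract the contradiction from $b=c_0 a$. Substituting into the Pythagorean identity $|a|^2+|b|^2=1$ gives $|a|^2(1+|c_0|^2)=1$ almost everywhere on $\T$, so $|a|$ is constant on $\T$; since $a$ is outer this forces $a$, and hence $b=c_0a$, to be constant, contradicting the hypothesis that $b$ is non-constant. I expect the only genuinely delicate point to be the verification that Lemma~\ref{normz^n} applies, i.e.\ that $b/a\in H^2$, which is supplied by the reverse Carleson route above; everything else follows directly from the opposing monotonicities of $\|z^n\|_b^2$ and $\|z^n\|_\mu^2$.
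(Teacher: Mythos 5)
Your proof is correct and follows essentially the same route as the paper: test the isometry on the monomials $z^n$, use Lemma~\ref{normz^n} to equate $1+\sum_{j=0}^{n}|c_j|^2$ with $\mu(\T)+\int_{\D}|z|^{2n}\,d\mu$, conclude that $b/a$ is constant, and derive the contradiction from the fact that an outer function with constant modulus is constant. The only differences are minor and in fact favorable: you extract constancy via the monotonicity clash where the paper lets $n\to\infty$, and you explicitly verify the hypothesis $b/a\in H^2$ of Lemma~\ref{normz^n} (through the reverse Carleson measure property, Corollary~\ref{no-RCE-non-ext} and Theorem~\ref{Sarasonequivalence}), a point the paper's own proof leaves implicit.
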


\begin{proof}
Let us assume to the contrary that $b$ is non-constant and that there exists a $\mu\in M_{+}(\D^-)$ such that
\[
\|f\|_b=\|f\|_\mu,\quad f\in \HH(b).
\]
Let us apply this identity to $f  = z^{n}$.
First observe that
\[
\|z^n\|^2_\mu =\int_{\D^-}|z|^{2n}d\mu(z)=\mu(\T)+\int_{\D}|z|^{2n}d\mu(z).
\]
Lemma \ref{normz^n} yields
\begin{equation}
\label{isometriczn}
\mu(\T)+\int_{\D}|z|^{2n}d\mu(z) =\|z^n\|_b^2= 1+\sum_{j=0}^{n}|c_j|^2,\quad n\geq 0.
\end{equation}
Now let  $n\to \infty$ to get
\[
\mu(\T) = 1+\sum_{j=0}^{+\infty}|c_j|^2.
\]
Combine this identity with the one in  (\ref{isometriczn}) to obtain
\[
\int_{\D}|z|^{2n}d\mu(z)=0 
\]
and
\[
\sum_{j=n+1}^{+\infty}|c_j|^2 =0
\]
for all $n\geq 0$. In particular, the last identity for $n=0$ gives that  $b/a$ must be constant, or equivalently $b=ka$, with $k\in\mathbb{C}$. Hence, since 
$$1=|a|^2+|b|^2=|a|^2(1+|k|^2) \quad \mbox{a.e. on $\T$},$$
 $|a|^2$ is constant on $\T$. But since $a$ is outer, this forces $a$, and hence $b$, to be constant, yielding the desired contradiction. 
\end{proof}

\begin{Remark}
When $b$ is constant then $\mathscr{H}(b) = H^2$ with the norms differing by the constant $\sqrt{1 - |b|^2}$. In this case the only isometric measure for $H^2$ is Lebesgue measure $m$. Surely this is well-known but we include the following simple proof for the convenience of the reader. Indeed for each $n \in \N \cup \{0\}$ 
$$1 = \|z^{n}\|^2_{2} = \int_{\D} |z|^{2 n} d \mu + \mu(\T).$$ By the dominated convergence theorem, the first term on the right hand side goes to zero as $n \to \infty$ and so $\mu(\T) = 1$. But this means, by setting $n = 0$ in the previous equation, that $\mu(\D) = 0$. This implies that $\mu = \mu|\T$. By Carleson's criterion we see that $\mu \ll m$ and so $d \mu = h dm$. To see that $h$ is equal to one almost everywhere, apply the fact that $\mu$ is an isometric measure to the normalized reproducing kernels 
\begin{equation} \label{NRKH2}
\kappa_{\lambda}(z) := \frac{\sqrt{1 - |\lambda|^2}}{1 - \overline{\lambda} z}
\end{equation}
 to get that 
$$1 = \int_{\T} \frac{1 - |\lambda|^2}{|1 - \overline{\zeta} \lambda|^2} h(\zeta) dm(\zeta), \quad \lambda \in \D.$$ From basic facts about the Poisson integral it follows that $h \equiv 1$. Thus $\mu = m$.
\end{Remark}

\section{Final Remark}

Suppose that $b$ is an inner function. Then $\mathscr{H}(b) = (b H^{2})^{\perp}$ is the classical model space and is certainly a closed subspace of $H^2$. There is a concept developed in \cite{BFGHR} of a dominating set. Here a Borel set $E \subset \T$ with $m(E) < 1$ is called a \emph{dominating set} for $(b H^2)^{\perp}$ if 
$$\int_{\T} |f|^2 dm \lesssim \int_{E} |f|^2 dm, \quad f \in (b H^2)^{\perp}.$$ In \cite{BFGHR} it is shown that dominating sets exist for every model space $(b H^2)^{\perp}$ and can be used to give sufficient conditions for reverse Carleson embeddings for these spaces. 

One might be tempted to define a notion of dominating set for $\mathscr{H}(b)$ as a Borel set $E \subset \T$ satisfying 
$$\|f\|_{b}^{2} \lesssim \int_{E} |f|^2 dm, \quad f \in \mathscr{H}(b).$$ However, there is no real point to this. 

\begin{Proposition}
If $b \in H^{\infty}_{1}$ and not an inner function then
there is no Borel subset $E$ of $\T$ with $0 < m(E) < 1$ for which 
\begin{equation} \label{dominating}
\|f\|_{b}^{2} \lesssim \int_{E} |f|^2 dm, \quad f \in \mathscr{H}(b).
\end{equation}
\end{Proposition}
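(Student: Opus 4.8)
The plan is to show that the existence of such an $E$ would force $\mathscr{H}(b) = H^2$ with equivalent norms, and then to contradict this with concentrating reproducing kernels. First I would note that $\mu := \chi_E\,dm$ is absolutely continuous with respect to $m$, so every $f \in \mathscr{H}(b) \subseteq H^2$ has nontangential limits $m$-a.e., hence $\mu$-a.e.; thus $\int_E |f|^2\,dm$ is well defined for all $f \in \mathscr{H}(b)$ and \eqref{dominating} is a genuine reverse embedding. Using the contractive containment $\mathscr{H}(b) \subseteq H^2$ recalled in the introduction, I would then write, for every $f \in \mathscr{H}(b)$,
\[
\int_E |f|^2\,dm \le \int_\T |f|^2\,dm = \|f\|_2^2 \le \|f\|_b^2 \lesssim \int_E |f|^2\,dm,
\]
so that all three quantities are comparable; in particular $\|f\|_b \asymp \|f\|_2$ on $\mathscr{H}(b)$.

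The heart of the argument is to upgrade this norm equivalence to the identity $\mathscr{H}(b) = H^2$. Since $\|\cdot\|_b \asymp \|\cdot\|_2$ on $\mathscr{H}(b)$, the space $\mathscr{H}(b)$, being complete in $\|\cdot\|_b$, is closed in $H^2$. To see that it is also dense, I would compute $\ker(I - T_b T_{\overline b})$: if $(I - T_b T_{\overline b})f = 0$, then $\|f\|_2^2 - \|T_{\overline b}f\|_2^2 = 0$, and since $\|T_{\overline b}f\|_2 \le \||b|f\|_2 \le \|f\|_2$, both inequalities must be equalities, forcing $\int_\T (1 - |b|^2)|f|^2\,dm = 0$. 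Because $b$ is not inner, the set $Z_b = \{\,|b| < 1\,\}$ has positive Lebesgue measure, so $f$ vanishes on a set of positive measure and hence $f \equiv 0$ by the F.\ and M.\ Riesz uniqueness theorem. Thus $\ker(I - T_b T_{\overline b}) = \{0\}$, which makes $\mathscr{H}(b) = \operatorname{ran}(I - T_b T_{\overline b})^{1/2}$ dense in $H^2$; being also closed, $\mathscr{H}(b) = H^2$ with equivalent norms.

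Finally I would produce the contradiction using the normalized $H^2$ reproducing kernels $\kappa_\lambda$ from \eqref{NRKH2}. Since $m(E) < 1$, I pick a Lebesgue density point $\zeta_0$ of $\T \setminus E$ and let $\lambda \to \zeta_0$ radially. Then $\|\kappa_\lambda\|_b^2 \asymp \|\kappa_\lambda\|_2^2 = 1$, while
\[
\int_E |\kappa_\lambda|^2\,dm = \int_E \frac{1 - |\lambda|^2}{|1 - \overline\lambda z|^2}\,dm(z)
\]
is the Poisson integral of $\chi_E$ at $\lambda$, which tends to $0$ as $\lambda \to \zeta_0$ (the value of $\chi_E$ at a density point of the complement). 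Feeding $\kappa_\lambda \in \mathscr{H}(b) = H^2$ into \eqref{dominating} gives $1 \lesssim \int_E |\kappa_\lambda|^2\,dm \to 0$, the desired contradiction.

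I expect the main obstacle to be the density step, namely verifying that $\ker(I - T_b T_{\overline b}) = \{0\}$, since this is precisely where the non-inner hypothesis is used and where the F.\ and M.\ Riesz theorem enters; once density (and hence $\mathscr{H}(b) = H^2$) is in hand, the contradiction is a routine consequence of the contractive containment together with the standard concentration of the Poisson kernel at a density point of $\T \setminus E$.
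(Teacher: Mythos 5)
Your proof is correct, and its overall skeleton coincides with the paper's: both establish $\|f\|_b \asymp \|f\|_2$ on $\mathscr{H}(b)$ from the contractive containment plus \eqref{dominating}, conclude that $\mathscr{H}(b)$ is a closed subspace of $H^2$, upgrade this to $\mathscr{H}(b)=H^2$ with equivalent norms, and then derive the contradiction by feeding the normalized kernels $\kappa_\lambda$ into \eqref{dominating} and letting $\lambda$ tend to a Lebesgue density point of $\T\setminus E$ (which exists since $m(E)<1$). The genuine difference is in the middle step. The paper simply cites Sarason's structural theorem that $\mathscr{H}(b)$ can be closed in $H^2$ only when $b$ is inner or $\|b\|_\infty<1$, and uses the non-inner hypothesis to land in the second case. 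You instead prove the needed conclusion from scratch: you show $\ker(I-T_bT_{\overline b})=\{0\}$ when $b$ is not inner, via the equality chain $\|T_{\overline b}f\|_2\le \||b|f\|_2\le\|f\|_2$ forcing $\int_\T(1-|b|^2)|f|^2\,dm=0$, hence $f=0$ on the positive-measure set $Z_b$ and so $f\equiv 0$; this gives density of $\operatorname{ran}(I-T_bT_{\overline b})^{1/2}=\mathscr{H}(b)$, which together with closedness yields $\mathscr{H}(b)=H^2$. Your route is self-contained and in fact bypasses the (stronger) conclusion $\|b\|_\infty<1$ entirely, at the cost of a slightly longer argument; the paper's route is shorter but leans on a nontrivial external result. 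One small attribution quibble: the fact that a nonzero $H^2$ function cannot vanish on a set of positive measure is the classical uniqueness theorem for Hardy spaces (a consequence of $\log|f|\in L^1(\T)$ for $f\in H^2\setminus\{0\}$), not the F.~and M.~Riesz theorem, which concerns measures with vanishing negative Fourier coefficients; the fact you use is true and standard, so this does not affect the validity of the proof.
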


\begin{proof}
If $E$ were such a set satisfying \eqref{dominating} then $\T$ would also satisfy \eqref{dominating}. However, since the embedding of $\mathscr{H}(b)$ into $H^2$ is contractive, then we have 
$$\int_{\T} |f|^2 dm \le \|f\|_{b}^{2} \lesssim \int_{\T} |f|^2 dm, \quad f \in \mathscr{H}(b).$$
This means that $\mathscr{H}(b)$ is a closed subspace of $H^2$, which can only happen when either $b$ is an inner function or when $\|b\|_{\infty} < 1$ \cite[p.~10]{Sa}.

Since we are assuming that $b$ is not an inner function, we are left with dealing with the case $\|b\|_{\infty} < 1$. Here $\mathscr{H}(b) = H^2$ with an equivalent norm. By using the normalized kernel functions $\kappa_{\lambda}(z)$ from \eqref{NRKH2} in 
 the inequality \eqref{dominating} and basic facts about pointwise limits of Poisson integrals (and the fact that $m(E) < 1$) we get a contradiction. 
\end{proof}

\bibliographystyle{plain}

\bibliography{references}

\end{document}